\newcommand{\dsp}{\displaystyle}
\newcommand{\bd}{\begin{displaymath}}
\newcommand{\be}{\begin{equation}}
\newcommand{\beq}{\begin{eqnarray}}
\newcommand{\ba}{\begin{array}}
\newcommand{\ed}{\end{displaymath}}
\newcommand{\ee}{\end{equation}}
\newcommand{\eeq}{\end{eqnarray}}
\newcommand{\ea}{\end{array}}
\newcommand{\espace}{\mbox{ }}
\newcommand{\eps}{\varepsilon}
\newcommand{\abs}[1]{\left|#1\right|}
\newcommand{\Prob}{{\rm I\hspace{-0.8mm}P}}
\newcommand{\Exp}{{\rm I\hspace{-0.8mm}E}}
\newcommand{\N}{{\mathbb N}}
\newcommand{\Z}{{\mathbb Z}}
\newcommand{\R}{{\mathbb R}}
\newcommand{\dt}{\partial_t}
\newcommand{\dx}{\partial_x}
\newcommand{\eqref}[1]{(\ref{#1})}
\newtheorem{theorem}{Theorem}[section]
\newtheorem{proposition}{Proposition}[section]
\newtheorem{definition}{Definition}[section]
\newtheorem{lemma}{Lemma}[section]
\newtheorem{corollary}{Corollary}[section]
\newtheorem{remark}{Remark}[section]
\newenvironment{proof}[2]{\espace\\{\em Proof of #1 \ref{#2}.}}{\hfill\mbox{$\square$}}
\begin{document}
\title{ Constructive  Euler hydrodynamics for one-dimensional attractive particle systems
}
\author{C. Bahadoran$^{a,e}$, H. Guiol$^{b}$, K. Ravishankar$^{c,f}$, E. Saada$^{d,e}$}
\date{\today}

\maketitle
{\large \sl To Chuck Newman, Friend, Colleague, and Mentor}
$$ \ba{l}
^a\,\mbox{\small Laboratoire de Math\'ematiques Blaise Pascal, Universit\'e Clermont Auvergne, 63177 Aubi\`ere, France} \\
\quad \mbox{\small e-mail:
bahadora@math.univ-bpclermont.fr}\\
^b\, \mbox{\small Universit\'e Grenoble Alpes, CNRS UMR 5525, TIMC-IMAG,} \\
\quad \mbox{\small 
Computational and Mathematical Biology, 38042 Grenoble cedex, France
} \\
\quad \mbox{\small e-mail:
 guiolh@univ-grenoble-alpes.fr
}\\
^c\, \mbox{\small NYU Shanghai and NYU-ECNU Institute of Mathematical Sciences at NYU-Shanghai, 
China
} \\
\quad \mbox{\small e-mail:
 kr26@nyu.edu}\\
^d\, \mbox{\small CNRS, UMR 8145, MAP5,
Universit\'e Paris Descartes,
Sorbonne Paris Cit\'e, France}\\
\quad \mbox{\small e-mail:
Ellen.Saada@mi.parisdescartes.fr}\\ \\
^e\, \mbox{\small  Supported by  grant ANR-15-CE40-0020-02 }\\
^f\, \mbox{\small Supported by Simons Foundation Collaboration grant 281207}\\
\ea
$$

\begin{abstract}
We review a  (constructive)  approach first introduced in \cite{bgrs1} and further 
developed in \cite{bgrs2,bgrs3,mrs, bgrs4} for hydrodynamic limits 
of asymmetric attractive particle systems,  in a weak or in a strong 
(that is, almost sure) sense,
in an homogeneous or in a quenched disordered setting. 
\end{abstract}

\section{Introduction}\label{sec:intro}
 Among the most studied conservative 
interacting particle systems (\emph{IPS})
 are the
simple exclusion and the zero-range processes. They are {\em
attractive processes}, and possess a one-parameter family of
product extremal invariant and translation invariant probability measures,
that we denote by $\left\{ \nu _\alpha \right\}_{\alpha}$, where
$\alpha$ represents the mean density of particles per site: for
simple exclusion $\alpha \in [0,1]$, and for zero-range $\alpha
\in [0,+\infty)$ (see \cite[Chapter VIII]{lig1}, and \cite{andjel}). 
Both belong to a more general class of systems with
similar properties, called {\em misanthropes} processes (\cite{coc}).\\ \\
 Hydrodynamic limit (\cite{dp, spohn, kl}) is a law of large
numbers for the time evolution (usually described by a limiting PDE,
 called the hydrodynamic equation) of empirical density
fields in interacting particle systems. 
Most usual IPS can be divided into two groups, diffusive and
hyperbolic. In the first group,  which contains for instance the
symmetric or mean-zero asymmetric simple exclusion process, the
macroscopic$\to$microscopic space-time scaling is
$(x,t)\mapsto(Nx,N^2t)$ with $N\to\infty$, and the limiting PDE is a
diffusive equation. In the second group, which contains for instance
the nonzero mean asymmetric simple exclusion process, the scaling is
$(x,t)\mapsto(Nx,Nt)$, and the limiting PDE is of Euler type.
In
both groups this PDE  often  exhibits nonlinearity, either via the
diffusion coefficient in the first group, or via the flux function
in the second one. This raises special difficulties in the
hyperbolic case, due to shocks and non-uniqueness for the solution
of the PDE, in which case the natural problem is to establish
convergence to the so-called \textit{entropy solution} (\cite{serre}). \\ \\
In most known
results, only a weak law of large numbers is established. In this
description one need not have an explicit construction of the
dynamics: the limit is shown in probability with respect to the law
of the process, which is characterized in an abstract way by its
Markov generator and Hille-Yosida's theorem  (\cite{lig1}).
Nevertheless, when simulating particle systems, one naturally uses a
pathwise construction of the process on a
 Poisson space-time random graph (the so-called
\textit{graphical construction}). In this description the dynamics is
deterministically driven by a random space-time measure which
tells when and where the configuration has a chance of being
modified. It is of special interest to show that the hydrodynamic
limit holds for almost every realization of the space-time
measure, as this means  a single simulation is enough to
approximate solutions of the limiting PDE. \\ \\
We are interested  here  in the hydrodynamic behavior of a class of
asymmetric particle systems of $\Z$, which arise as a natural
generalization of the asymmetric exclusion process. 
For such processes, hydrodynamic limit is given by the entropy solutions to a scalar
conservation law of the form
\be \label{hydro_sep}
\partial_t u(x,t)+\partial_x G(u(x,t))=0
\ee 
where $u(.,.)$ is the density field and $G$ - the {\em macroscopic
flux}. The latter is given for the asymmetric exclusion process 
by $G(u)=\gamma u(1-u)$, where $\gamma$ is the mean drift of a particle. 
Because there is a single conserved quantity
({\em i.e.} mass) for the particle system, and an ergodic
equilibrium measure for each density value, \eqref{hydro_sep} can be
guessed through heuristic arguments if one takes for granted that
the system is in {\em local equilibrium}. The macroscopic flux $G$
is obtained by an equilibrium expectation of a microscopic flux
which can be written down explicitly from the dynamics. 
A rigorous proof of the hydrodynamic limit turns
out to be a difficult problem, mainly because of the non-existence
of strong solutions for \eqref{hydro_sep} and the non-uniqueness of
weak solutions. Since the conservation law is not sufficient to
pick a single solution, the so-called entropy weak solution must
be characterized by additional properties; one must then look for
related properties of the particle system to establish its
convergence to the entropy solution. \\ \\
The derivation of hyperbolic equations  of the form \eqref{hydro_sep}  as hydrodynamic limits
began with the seminal paper \cite{rost}, which established a strong
law of large numbers for the totally asymmetric simple exclusion
process on $\Z$, starting with $1$'s to the left of the origin and
$0$'s to the right. This result was extended by \cite{bf} and
\cite{afs} to nonzero mean exclusion process starting from product
Bernoulli distributions with arbitrary densities $\lambda$ to the
left and $\rho$ to the right (the so-called  \textit{Riemann initial
condition}). The Bernoulli distribution at time $0$ is   related to
the fact that uniform Bernoulli measures are invariant for the
process. For the one-dimensional totally asymmetric  (nearest-neighbor)  $K$-exclusion
process, a particular misanthropes process without explicit invariant measures,  
a strong hydrodynamic limit was established in \cite{timo}, starting
from arbitrary initial profiles, by means of the so-called
\textit{variational coupling},  that is a microscopic version of the Lax-Hopf formula. 
These  were the only strong laws available before the series of works reviewed here. 
A common feature of these works is the use of subadditive
ergodic theorem to exhibit some a.s. limit, 
which is then identified by additional arguments. \\ \\
On the other hand, many {\em weak} laws of large numbers were
established for attractive particle systems. A first series of
results treated  systems with product invariant measures and product
initial distributions. In \cite{ak}, for a particular zero-range
model, a weak law was deduced from conservation of local equilibrium
under Riemann initial condition. It was then extended in \cite{av}
to the misanthrope's process of \cite{coc} under an additional
convexity assumption on the flux function. These were substantially
generalized (using Kru\v{z}kov's entropy inequalities,  see \cite{k}) 
in \cite{fraydoun} to multidimensional attractive
systems with product invariant measures
for arbitrary Cauchy data, without any convexity
requirement on the flux.
 In  \cite{rez}, 
 using an abstract 
characterization of the evolution semigroup associated with the limiting equation, 
hydrodynamic limit was
established for the one-dimensional nearest-neighbor $K$-exclusion
process. \\ \\
The above results are concerned with translation-invariant particle dynamics. We are also interested 
in hydrodynamic limits of particle systems in random environment,
leading to homogeneization effects, where an effective diffusion matrix or
flux function is expected to capture the effect of inhomogeneity. 
Hydrodynamic limit in random environment has been widely addressed and robust methods  
have been  developed in the diffusive case.  
In the hyperbolic setting, 
  the few available results in random environment (prior to \cite{bgrs4})
depended on particular features of the investigated models. 
In \cite{bfl}, the authors prove, for the
asymmetric zero-range process with site disorder on $\Z^d$,   
  a quenched hydrodynamic limit given by a hyperbolic
conservation law with an effective homogeneized flux function. To
this end, they use in particular the existence of explicit product
invariant measures for the disordered zero-range process  below
some critical value of the disorder parameter.   In \cite{ks}, extension to the supercritical case
is carried out in the totally asymmetric case with constant jump rate. 
In \cite{timo},  the author establishes a  quenched hydrodynamic
limit for  the totally asymmetric nearest-neighbor $K$-exclusion
process on $\Z$ with  i.i.d  site disorder,  for which explicit
invariant measures are not known. The  last  two results rely on  variational coupling.   However,
the simple exclusion process beyond the totally asymmetric nearest-neighbor case, or more complex
models with state-dependent jump rates, remain outside the scope of  this approach. \\ \\
In this paper, we review successive stages  (\cite{bgrs1, bgrs2, bgrs3, mrs, bgrs4}) 
of a {\em constructive} approach to hydrodynamic limits  given by equations 
of the type \eqref{hydro_sep}, 
which ultimately led us in \cite{bgrs4} to a very general hydrodynamic limit 
result 
for attractive particle systems in one dimension in ergodic random environment.
 We shall  detail our method 
in the setting of \cite{bgrs4}. 
However, we will first explain our approach and advances along the
progression of papers, and quote  results for each one, since they are 
interesting in their own. We hope this could be helpful for a reader 
looking for hydrodynamics of a specific model: according to the available 
knowledge on this model, this reader
could derive either 
a weak, or a strong (without disorder or with a quenched disorder) hydrodynamic limit.\\ \\
 Our  motivation for \cite{bgrs1} was to prove with a constructive method
hydrodynamics of one-dimensional attractive dynamics with product invariant measures, 
but without a concave/convex flux, in view of examples of $k$-step exclusion processes 
and misanthropes processes.
 We initiated for that a ``resurrection''
of the approach of \cite{av}, and we introduced a variational formula
for the entropy solution of the hydrodynamic equation in the Riemann case, and an
approximation scheme to go from a Riemann to a general initial profile. 
Our method is based on an interplay of macroscopic properties for the conservation law 
and analogous microscopic properties for the particle system.
 The next stage, 
achieved in \cite{bgrs2}, was to derive  hydrodynamics 
(which was still a weak law)
for attractive processes without explicit invariant measures. In the same setting, 
we then obtained almost sure hydrodynamics in \cite{bgrs3}, 
 relying on a graphical representation of the dynamics. 
The latter result, apart from its own interest, proved to be an essential step
to obtain quenched hydrodynamics in the disordered case (\cite{bgrs4}). 
For this last paper,
we also relied on \cite{mrs}, which improves an essential property we use, macroscopic stability.\\ \\
Let us mention that we are now working (\cite{bmrs3, bmrs4}) 
 on the hydrodynamic behavior of the disordered asymmetric zero-range process. 
This model falls outside the scope of the present paper because it exhibits  
a phase transition with a critical density above which no invariant measure exists. 
In the supercritical regime, the hydrodynamic limit cannot be established by local 
equilibrium arguments, and condensation may occur locally on a finer scale than 
the hydrodynamic one. Other issues related to this model have been studied recently 
in \cite{bmrs1,bmrs2}.   \\ \\
This review paper
 is organized as follows. 
In Section \ref{sec:notation}, after giving general notation and definitions, 
we  introduce the two basic models we originally worked with,  the misanthropes process
and the $k$-step exclusion process. Then 
we describe informally the  results, and the  main ideas involved in  
\cite{av}  which was our starting point, and in each of the papers
\cite{bgrs1,bgrs2,bgrs3,mrs,bgrs4}. 
Section \ref{sec:results} contains our main results, stated (for convenience) for 
 the misanthropes process.  We then aim at explaining how these results are proved. 
In Section \ref{sec:pde}, we first give a self-contained introduction to scalar conservation 
laws, with the main definitions and results important for our purposes; then we explain 
the derivation of our variational formula in the Riemann case
(illustrated by an example of 2-step exclusion process), and finally our
approximation scheme to solve the general Cauchy problem. 
In Section \ref{proof_hydro}, we outline the most important steps of 
our proof of hydrodynamic limit in a quenched disordered setting: again, 
we first deal with the Riemann problem, then with the Cauchy  problem. 
Finally, in Section 
\ref{sec:othermodels}, we define a general framework which enables 
to describe a class of models
possessing the necessary properties to derive hydrodynamics, and study
a few  examples. 
\section{Notation and preliminaries}\label{sec:notation}
Throughout this paper $\N=\{1,2,...\}$ will denote the set of
natural numbers, and $\Z^+=\{0,1,2,...\}$ the set of non-negative
integers,  and $\R^{+*}=\R^+\setminus\{0\}$ the
set of positive real numbers. 
The integer part  $\lfloor x\rfloor\in\Z$ of $x\in\R$    is
uniquely defined by $\lfloor x\rfloor \leq x<\lfloor x\rfloor +1$.\\ \\
The set of environments  (or disorder)  
is a probability space $({\mathbf A},{\mathcal F}_{\mathbf A},Q)$, 
where $\mathbf A$ is a  compact
metric space and ${\mathcal F}_{\mathbf A}$ its Borel
$\sigma$-field. On $\mathbf A$ we have a group of space shifts
$(\tau_x:\,x\in\Z)$, with respect to which $Q$ is ergodic. \\ \\
We consider particle  configurations  (denoted by greek letters
$\eta,\xi\ldots$) 
 on $\Z$  with at most $K$ (but always finitely many) particles 
 per site, for some given $K\in\N\cup\{+\infty\}$.
Thus the state space, which will be denoted by $\mathbf X$, is either $\N^\Z$ 
in the case $K=+\infty$, or $\{0,\cdots,K\}^\Z$ for $K\in\N$.
For $x\in\Z$ and $\eta\in{\mathbf X}$, $\eta(x)$ denotes the number of particles on site $x$. 
This state space is endowed with the product
topology, which makes it a  metrisable space, 
compact when $\mathbf X=\{0,\cdots,K\}^\Z$.   \\ \\
 A function $f$ defined on ${\mathbf A}\times{\mathbf X}$
 (resp. $g$   on ${\mathbf A}\times{\mathbf X}^2$,  $h$ on $\mathbf X$) 
 is called {\em local} if there is a finite subset
$\Lambda$ of $\Z$ such that $f(\alpha,\eta)$  depends only on
$\alpha$ and $(\eta(x),x\in\Lambda)$
 (resp. $g(\alpha,\eta,\xi)$ depends only on
$\alpha$ and $(\eta(x),\xi(x), x\in\Lambda)$, 
 $h(\eta)$ depends only on
 $(\eta(x), x\in\Lambda)$). 
We denote  again  by $\tau_x$ either the spatial translation operator on the
real line for $x\in\R$, defined by $\tau_x y=x+y$, or its
restriction to  $x\in\Z$. By extension, if $f$ is a function defined
on $\Z$ (resp. $\R$), we set $\tau_x f=f\circ\tau_x$ for $x\in\Z$
(resp. $\R$). In the sequel this will be applied to particle
configurations $\eta\in\mathbf X$, disorder configurations
$\alpha\in\mathbf{A}$, or joint disorder-particle configurations
$(\alpha,\eta)\in\mathbf{A}\times\mathbf{X}$. In the latter case,
unless mentioned explicitely, $\tau_x$  
 applies simultaneously to both components.\\ \\
If $\tau_x$ acts on some set and $\mu$ is a measure on this set,
$\tau_x\mu=\mu\circ\tau_{x}^{-1}$.
We let ${\mathcal M}^+(\R)$ denote the set of  nonnegative
measures on $\R$ equipped with the metrizable topology of vague
convergence, defined by convergence on continuous test functions
with compact support. The set of probability measures on
$\mathbf{X}$ is denoted by ${\mathcal P}(\mathbf{X})$. If $\eta$ is
an ${\mathbf X}$-valued random variable and $\nu\in{\mathcal
P}(\mathbf{X})$, we write $\eta\sim\nu$ to specify that $\eta$ has
distribution $\nu$.  Similarly, for
$\alpha\in\mathbf{A}, Q\in{\mathcal P}(\mathbf{A})$, $\alpha\sim Q$
means that $\alpha$ has distribution $Q$. 
\subsection{Preliminary definitions}\label{subsec:basic_def}
Let us introduce briefly the various notions we shall use in this review,
in view of the next section, where we informally tell the content of each of our papers.
We shall be more precise in the following sections.
Reference books are \cite{lig1, kl}. \\ \\
\textbf{The process.} 
We work with a conservative (i.e. involving only particle jumps but no creation/annihilation),
attractive  (see Definition \eqref{attractive_2} below)  Feller process 
$(\eta_t)_{t\ge 0}$ of state space $\mathbf X$.
When this process evolves in a random environment $\alpha\in\mathbf{A}$,
we denote its  generator by  $L_\alpha$ and its semigroup by $(S_\alpha(t),t\ge 0)$.
Otherwise we denote them by  $L$ and  $S(t)$. 
 In the absence of disorder,  we denote by $\mathcal S$ 
the set of translation invariant probability measures on $\mathbf X$,
 by  $\mathcal I$ 
the set of  invariant probability measures for the process $(\eta_t)_{t\ge 0}$,
and by  $({\mathcal I}\cap{\mathcal S})_e$ 
the set of extremal invariant and translation invariant
probability measures for $(\eta_t)_{t\ge 0}$.  In the disordered case, 
$\mathcal S$ will denote the set of translation invariant probability measures 
on ${\bf A}\times{\bf X}$, see Proposition \ref{invariant}.  \\ \\
 A sequence  $(\nu_n,n\in\N)$ of probability measures on
${\mathbf X}$ converges weakly to some $\nu\in{\mathcal
P}(\mathbf{X})$, if and only if  $\lim_{n\to\infty}\int
f\,d\nu_n=\int f\,d\nu$  for every continuous function $f$ on
${\mathbf X}$. The topology of weak convergence is metrizable and
makes ${\mathcal P}({\mathbf X})$ compact  when $\mathbf X=\{0,\cdots,K\}^\Z$. \\ \\
We equip $\mathbf X$ with the \textit{coordinatewise order}, defined for 
$\eta,\xi\in\mathbf X$ by $\eta\leq\xi$ if and only if $\eta(x)\leq\xi(x)$ for all $x\in\Z$.
A partial stochastic order is defined on ${\mathcal P}(\mathbf{X})$;
namely, for $\mu_1,\mu_2\in{\mathcal P}(\mathbf{X})$,
we write $\mu_1\leq\mu_2$ if the following equivalent conditions
hold (see {\em e.g.}  \cite{lig1, strassen}):\\
  \textit{(i)} For
every non-decreasing nonnegative function $f$ on $\mathbf X$, $\int
f\,d\mu_1\leq\int f\,d\mu_2$.\\
 \textit{(ii)} There exists a coupling
measure  $\overline{\mu}$ on $
\mathbf {X}\times\mathbf {X}$ with marginals $\mu_1$ and $\mu_2$,
such that $\overline{\mu}\{(\eta,\xi):\,\eta\leq\xi\}=1$.\\ \\
The process $(\eta_t)_{t\ge 0}$ is \textit{attractive} 
if its semigroup acts monotonically on probability measures, that is: 
for any $\mu_1,\mu_2\in{\mathcal P}(\mathbf{X})$,
\be \label{attractive_2} \mu_1\leq\mu_2\Rightarrow\forall
t\in\R^+,\,\mu_1 S_\alpha(t)\leq\mu_2 S_\alpha(t) \ee \\
\textbf{Hydrodynamic limits.}
Let $N\in\N$ be the \textit{scaling parameter} for the hydrodynamic limit,
that is, the inverse of the macroscopic distance between two
consecutive sites. The empirical measure of a configuration $\eta$
viewed on scale $N$ is given by 
\[
\pi^N(\eta)(dx)=N^{-1}\sum_{y\in\Z}\eta(y)\delta_{y/N}(dx)
\in{\mathcal M}^+(\R) \]
 where, for $x\in\R$, $\delta_x$ denotes the Dirac measure at $x$, 
  and ${\mathcal M}^+(\R)$ denotes the space of Radon measures on $\R$. 
 This space will be endowed with the metrizable  topology of vague convergence, 
 defined by convergence against the set $C^0_K(\R)$ of continuous test functions 
 on $\R$ with compact support. Let $d_v$ be a distance associated with this topology,
  and $\pi_.$, $\pi'_.$ be two mappings from $[0,+\infty)$ to ${\mathcal M}^+(\R)$. 
  We set
\begin{eqnarray*}
D_T(\pi_.,\pi'_.) & := & \mbox{\rm ess}\sup_{t\in[0,T]}d_v(\pi_t,\pi'_t)\\
D(\pi_.,\pi'_.) & := & \sum_{n=0}^{+\infty}2^{-n}\min[1,D_n(\pi_.,\pi'_.)]
\end{eqnarray*}
A sequence $(\pi^n_.)_{n\in\N}$ of random ${\mathcal M}^+(\R)$-valued paths 
is said to converge locally uniformly in probability to a random 
${\mathcal M}^+(\R)$-valued path $\pi_.$ if, for every $\varepsilon>0$,
\[
\lim_{n\to+\infty}\mu^n\left(
D(\pi^n_.,\pi_.)>\varepsilon
\right)=0
\]
where $\mu^n$ denotes the law of $\pi^n_.$.  \\ \\
 Let us now recall, in the context of scalar conservation laws, 
 standard definitions in hydrodynamic limit theory.
Recall that $K\in\Z^+\cup\{+\infty\}$ 
 bounds the 
number of particles per site.
Macroscopically, the set of possible particle densities will be $[0,K]\cap\R$.
Let $G:[0,K]\cap\R\to\R$ be a Lipschitz-continuous function, called the
{\em flux}.  It is a.e. differentiable, and its derivative $G'$ is
an (essentially) uniformly bounded function. We consider the scalar conservation law 
\be \label{hydrodyn} \dt u+\dx[G(u)]=0 \ee  
where $u=u(x,t)$ is some $[0,K]\cap\R$-valued density field defined on
$\R\times\R^+$. We denote by $L^{\infty,K}(\R)$ the set of bounded 
Borel functions from $\R$ to $[0,K]\cap\R$.
 \begin{definition}\label{def:density-profile}
Let $(\eta^N)_{N\geq 0}$ be a sequence of  $\mathbf X$-valued random variables, 
 and $u_0\in L^{\infty,K}(\R)$.  We say that the sequence $(\eta^N)_{N\geq 0}$  has:\\ \\
(i) {\rm weak density profile}
$u_0(.)$,  if $\pi^N(\eta^N)\to u_0(.)$ in probability with respect to 
the topology of vague convergence, that is equivalent to: 
for all $\varepsilon>0$ and test function $\psi\in C^0_K(\R)$, 
\[
\lim_{N\to\infty} \mu^N\left(\left|\int_{\R}\psi(x)\pi^N(\eta^N)(dx)
-\int_{\R} \psi(x)u_0(x)dx\right|>\varepsilon\right)=0
\]
where $\mu^N$ denotes the law of $\eta^N$.\\ \\
(ii)  {\rm strong density profile} $u_0(.)$,  if the random variables 
are defined on a common probability space $(\Omega_0,{\mathcal F}_0,\Prob_0)$, 
and $\pi^N(\eta^N)\to u_0(.)$ $\Prob_0$-almost surely with respect to the 
topology of vague convergence, that is equivalent to: 
for all test function $\psi\in C^0_K(\R)$, 
\[
 \Prob_0\left(\lim_{N\to\infty}\int_{\R}\psi(x)\pi^N(\eta^N)(dx)
=\int_{\R} \psi(x)u_0(x)dx\right)=1,
\]
\end{definition}
 We consider hydrodynamic limits under hyperbolic time scaling, that is
$Nt$, since we work with asymmetric dynamics. 
 \begin{definition}\label{def:hydro-lim}
  The sequence $(\eta^N_t,\,t\geq 0)_{N\geq 0}$ has
 {\rm hydrodynamic
limit} (resp. {\rm a.s. hydrodynamic limit})
$u(.,.)$  if: for all $t\geq 0,\,(\eta^N_{Nt})_N$ has weak (resp. strong)
density profile $u(.,t)$  where $u(.,t)$ is the {\rm weak entropic solution} of
 \eqref{hydrodyn} with initial condition $u_0(.)$, 
for an appropriately defined {\rm macroscopic flux function} $G$,
where $u_0$ is the density profile of the sequence $(\eta^N_0)_{N}$ 
in the sense of Definition \ref{def:density-profile}. 
\end{definition}
\subsection{Our motivations and approach}\label{sec:story}
Most results on hydrodynamics deal with dynamics with product invariant measures;
 in the most familiar cases, the flux function appearing in the hydrodynamic equation is 
convex/concave
(\cite{kl}). But for many usual examples, the first or the second statement
is not true.\\ \\
\textbf{Reference examples.}
 We present the attractive misanthropes process on one hand,
and the $k$-step exclusion process on the other hand: 
these two  classical  examples will illustrate our purposes along this review. 
In  these basic examples, we take $\alpha\in{\mathbf A}=[c,1/c]^\Z$  
(for a constant $0<c<1$) as the space of environments; 
this  corresponds to site disorder. We also consider those models
 without disorder, which corresponds to $\alpha(x)\equiv 1$.
However, 
our approach applies to a much broader class of models and environments,  as
will be explained in Section \ref{sec:othermodels}.
\\ \\
\textit{The misanthrope's process} 
was introduced in \cite{coc} (without disorder). It has state space 
either $\mathbf X=\N^\Z$ or $\mathbf X=\{0,\cdots,K\}^\Z$ ($K\in\N$), 
and $b\ :\Z^+\times\Z^+\to \R^+$ is the jump rate function. 
A particle present on $x\in\Z$ chooses $y\in\Z$
with probability  $p(y-x)$, where $p(.)$ (the particles' jump kernel)
 is an  asymmetric  probability measure on $\Z$,  and jumps to $y$  at rate 
$\alpha(x)b(\eta(x),\eta(y))$.  
 We assume the following:\label{assumptions_M}\\ \\
{\em (M1)} $b(0,.)=0$, with a $K$-exclusion rule when $\mathbf X=\{0,\cdots,K\}^\Z$: $b(.,K)=0$;\\ 
{\em (M2)} Attractiveness: $b$ is nondecreasing (nonincreasing) in its first (second) argument.\\ 
{\em (M3)} $b$ is a bounded function.\\
 {\em (M4)}  $p$ has a finite first moment, that is, $\sum_{z\in\Z}\abs{z}p(z)<+\infty$.  \\ \\
The quenched disordered process has generator
\be \label{generator} L_\alpha f(\eta)=\sum_{x,y\in{\Z}}\alpha(x)
p(y-x)b(\eta(x),\eta(y)) \left[ f\left(\eta^{x,y} \right)-f(\eta)
\right] \ee
where $\eta^{x,y}$ denotes
the new state after a particle has jumped from $x$ to $y$ (that is
$\eta^{x,y}(x)=\eta(x)-1,\,\eta^{x,y}(y)=\eta(y)+1,\,
\eta^{x,y}(z)=\eta(z)$ otherwise). \\
There are two well-known particular cases of attractive misanthropes processes:
the \textit{simple exclusion process}
(\cite{lig1}) corresponds to 
\[\mathbf X=\{0,1\}^\Z \quad\mbox{ with }\quad b(\eta(x),\eta(y))=\eta(x)(1-\eta(y));
\]
the \textit{zero-range process} (\cite{andjel}) corresponds to 
\[\mathbf X=\N^\Z \quad\mbox{ with }\quad b(\eta(x),\eta(y))=g(\eta(x)),
\]
for a non-decreasing function $g\ :\Z^+\to \R^+$ (not necessarily bounded).\\
 Let us now restrict ourselves to the model without disorder. 
For the simple exclusion and zero-range processes,  $({\mathcal I}\cap{\mathcal S})_e$ is 
a one-parameter family of  product 
probability measures.
The flux function is convex/concave for simple exclusion, but not necessarily
for zero-range.
However, in the general set-up of misanthropes processes,
unless the rate function $b$ satisfies additional algebraic conditions (see \cite{coc,fgs}),
the model does not have product invariant measures;
Even when this is the case, the flux function is not necessarily convex/concave. 
 We refer the reader to 
\cite{bgrs1,fgs}  for examples of misanthropes processes with product invariant measures. 
Note also that a misanthropes process with product invariant measures 
generally loses this property if disorder is introduced, 
with the sole known exception of the zero-range process (\cite{bfl,ev}). \\ \\
The \textit{$k$-step exclusion process} ($k\in\N$)
was introduced in \cite{guiol} (without disorder). 
Its state space is ${\mathbf X}:=\{0,1\}^{\Z}$, and $\{X_n\}_{n\in\N}$ is
 a Markov chain on $\Z$ with
transition matrix $p(.,.)$, distribution ${\bf P}^x$
(and expectation ${\bf E}^x$)
when its initial state is $x\in\Z$. It has generator
\begin{eqnarray}\label{generator_k} 
L_\alpha f(\eta)=\sum_{x,y\in\Z}\alpha(x)c(x,y,\eta)\left[
f\left(\eta^{x,y}\right)-f(\eta) \right]
 \qquad\mbox{with}\\ \nonumber
c(x,y,\eta )=\eta(x)(1-\eta(y)){\bf E}^x\left[
\prod_{i=1}^{\sigma _y-1}
\eta (X_i),\sigma _y\leq\sigma_x,\sigma _y\leq k\right]
\end{eqnarray}
where $\sigma_y=\inf\left\{ n\geq 1:X_n=y\right\} $ is the first
(non zero) arrival time to site $y$ of the chain starting at site
$x$. In words if a particle at site $x$ wants to jump it may go
to the first empty site encountered before returning to site $x$
following the chain $X_n$ (starting at $x$) provided it takes
less than $k$ attempts; otherwise the movement is cancelled.\\
When $k=1$, we recover the simple exclusion
process.\\
The $k$-step exclusion is an attractive process.\\ 
 Let us now restrict ourselves to the model without disorder.  
Then $({\mathcal I}\cap{\mathcal S})_e$ is
a one-parameter family of product Bernoulli measures.\\ 
 In
the totally asymmetric  nearest-neighbor  case, $c(x,y,\eta)=1$ if
$\eta(x)=1$, $y-x\in\{1,\ldots,k\}$ and $y$ is the first
nonoccupied site to the right of $x$; otherwise $c(x,y,\eta)=0$.
The flux function belongs to ${\mathcal C}^2(\R)$, 
it has one inflexion point, 
thus it is neither convex nor concave.  Besides,
flux functions with arbitrarily many inflexion points can be constructed 
by superposition of different $k$-step exclusion processes with 
different kernels and different values of $k$ (\cite{bgrs1}).  \\ \\
\textbf{A constructive approach to hydrodynamics.}
 To overcome the difficulties to derive hydrodynamics
raised by the above examples, 
our starting point was the constructive approach introduced in \cite{av}.
There, the authors proved the \textit{conservation of local equilibrium} 
for the one-dimensional zero-range process with a concave macroscopic flux function $G$
 \textit{in the Riemann case} (in a translation invariant setting, 
 $G$ is the mean flux of particles through the origin),  that is
\be\label{loc_eq}
\forall t>0,\quad\eta^{N}_{Nt}
\stackrel{{\mathcal L}}{\to}\nu_{u(t,x)}
\ee
where $\nu_\rho$ is the product invariant measure of the zero-range process 
with mean density $\rho$, and $u(.,.)$ is  
 the \textit{entropy solution} of the conservation law 
\be \label{hydrodynamics-av} \dt u+\dx[G(u)]=0;
\quad u(x,0)= R_{\lambda,\rho}(x)= \lambda\mathbf{1}_{\{x<0\}}+ \rho\mathbf{1}_{\{x\ge 0\}}\ee
 One can show (see \cite{kl}) that \eqref{loc_eq} implies 
the hydrodynamic limit in the sense of Definition \ref{def:hydro-lim}. 
Let us begin by explaining (informally) their method.
They first show in \cite[Lemma 3.1]{av} 
that a weak Cesaro limit of (the measure of)
the process is an invariant and translation invariant measure, thus a convex combination
of elements of $({\mathcal I}\cap{\mathcal S})_e$, the one-parameter family of
extremal invariant and translation invariant 
probability measures for the dynamics. Then they compute 
in \cite[Lemma 3.2]{av} the
(Cesaro) limiting density inside a macroscopic box,
 thanks to the explicit knowledge of the product measures
elements  of $({\mathcal I}\cap{\mathcal S})_e$. 
They prove next in \cite[Lemma 3.3 and Theorem 2.10]{av} 
that the above convex combination is in fact the Dirac measure concentrated on 
the solution of the hydrodynamic equation, thanks to the concavity of
their flux function. They conclude by  proving 
that the Cesaro limit implies the weak limit via monotonicity arguments, in 
\cite[Propositions 3.4 and 3.5]{av}.
Their proof is valid for misanthrope processes with product invariant measures and a concave
macroscopic flux.\\ \\
In \cite{bgrs1}, we derive by a constructive method the hydrodynamic behavior 
 of 
attractive processes with finite range
irreducible jumps, and for which the set $({\mathcal I}\cap{\mathcal S})_e$
consists in a one-parameter family of explicit product measures 
 but the flux is not necessarily convex or concave. 
Our approach relies on (i) an explicit construction
of Riemann solutions without assuming convexity of the macroscopic flux, 
 and (ii) a general result which proves that the hydrodynamic limit for Riemann initial profiles implies
the same for general initial profiles. \\
For point (i),  we
rely on the (parts of) the proofs in \cite{av} based only on attractiveness and on the
knowledge of the product measures composing
 $({\mathcal I}\cap{\mathcal S})_e$, 
and we provide a new approach otherwise. Instead of the convexity assumption on the flux,
which belongs here to ${\mathcal C}^2(\R)$,
we prove that the solution of the hydrodynamic equation is given by
a variational formula, whose index set is an interval, namely the set of values of the parameter
of the elements of $({\mathcal I}\cap{\mathcal S})_e$. 
Knowing  $({\mathcal I}\cap{\mathcal S})_e$ explicitly enables us  to deal with dynamics 
with the non compact state space $\N^\Z$.\\
 Point (ii) 
is based on an approximation scheme inspired by Glimm's scheme 
for hyperbolic systems of conservation laws (see \cite{serre}). 
Among our tools are the \textit{finite propagation property} 
and the \textit{macroscopic stability} of the
dynamics. The latter property is due to \cite{bm};  
both  require finite range transitions.\\
We illustrate our results on  variations of our above reference examples. \\ \\
 While the results and exemples of \cite{bgrs1} include the case $K=+\infty$, 
 in our subsequent works,
for reasons explained below, we considered $K<+\infty$, 
thus $\mathbf X=\{0,\cdots,K\}^\Z$, which will be assumed from now on. Under this additional assumption, 
in \cite{bgrs2}, we extend the hydrodynamics result of \cite{bgrs1}
to dynamics without explicit invariant measures.
Indeed, thanks to monotonicity, we prove that $({\mathcal I}\cap{\mathcal S})_e$
is still a one-parameter family of probability measures, for which the set $\mathcal R$ of values of
the parameter 
is a priori not an interval anymore, but a closed subset  of $[0,K]$: 
\begin{proposition}\label{prop_inv_bgrs2} (\cite[Proposition 3.1]{bgrs2}).
Assume $p$ satisfies the irreducibility assumption
\be\label{irreducibility_misanthrope}
\forall z\in\Z,\quad\sum_{n=1}^{+\infty}\left[
p^{*n}(z)+p^{*n}(-z)
\right]>0
\ee
where $p^{*n}$ denotes the $n$-th convolution power of the kernel $p$, 
that is the law of the sum of $n$ independent $p$-distributed random variables.
Then there exists a closed subset $\mathcal R$ of $[0,K]$, containing $0$ and $K$, such that
\be\label{inv_bgrs2}
({\mathcal I}\cap{\mathcal S})_e=\{\nu^\rho:\,\rho\in{\mathcal R}\}
\ee
where the probability measures $\nu^\rho$ on $\bf X$ satisfy the following properties:
\be\label{ergodic_bgrs2}
\lim_{l\to+\infty}(2l+1)^{-1}\sum_{x=-l}^l\eta(x)=\rho,\quad\nu^\rho\mbox{- a.s.}
\ee
and
\be\label{order_bgrs2}
\rho\leq\rho'\Rightarrow\nu^\rho\leq\nu^{\rho'}
\ee
\end{proposition}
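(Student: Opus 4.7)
The plan is to realize $(\mathcal{I} \cap \mathcal{S})_e$ as the image of a strictly monotone parametrization by mean density, with $\mathcal{R}$ identified as that parameter set. First I would check that any $\nu \in (\mathcal{I} \cap \mathcal{S})_e$ is ergodic under the spatial shift $\tau_1$: otherwise, the shift-ergodic decomposition of $\nu$ would produce a nontrivial convex decomposition within $\mathcal{I} \cap \mathcal{S}$ (each component remains time-invariant because $S(t)$ commutes with $\tau_1$ and the ergodic disintegration is unique), contradicting extremality in $\mathcal{I} \cap \mathcal{S}$. Birkhoff's theorem then yields \eqref{ergodic_bgrs2} with $\rho(\nu) := \int \eta(0)\, d\nu \in [0, K]$, giving a well-defined density map $\nu \mapsto \rho(\nu)$.

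\textbf{Stochastic order and injectivity.} For $\nu_1, \nu_2 \in (\mathcal{I} \cap \mathcal{S})_e$, take a translation-invariant coupling $\overline{\mu}_0$ of their marginals and run the basic (attractive) coupling of two misanthropes processes. Assumption (M2) implies that along this coupling discrepancies of opposite sign annihilate upon meeting, while \eqref{irreducibility_misanthrope} ensures any pair of opposite discrepancies meets in finite time almost surely. Passing to a Cesaro limit $\overline{\mu}_\infty$ of the coupled evolution (weakly compact because $K < +\infty$ makes $\mathbf{X}^2$ compact), the absence of surviving opposite discrepancies concentrates $\overline{\mu}_\infty$ on $\{\eta \leq \xi\} \cup \{\eta \geq \xi\}$; shift-ergodicity of the marginals then excludes a nontrivial mixture of these two sets, so one of the two inclusions holds $\overline{\mu}_\infty$-almost surely. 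This forces $\nu_1$ and $\nu_2$ to be stochastically comparable, with the comparison governed by density; in particular $\rho(\nu_1) = \rho(\nu_2)$ implies $\nu_1 = \nu_2$, and \eqref{order_bgrs2} holds. Set $\mathcal{R}$ to be the image of the density map and let $\nu^\rho$ denote its unique preimage.

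\textbf{Endpoints and closedness of $\mathcal{R}$.} The Dirac measures $\delta_{\underline{0}}$ and $\delta_{\underline{K}}$ lie in $(\mathcal{I} \cap \mathcal{S})_e$ because (M1) forbids any jump from the all-empty or all-full configuration, placing $\{0, K\} \subset \mathcal{R}$. For closedness, take $\rho_n \in \mathcal{R}$ with $\rho_n \to \rho$, and extract a monotone subsequence; by Stage 2 the associated $\nu^{\rho_n}$ are stochastically ordered, hence converge weakly to some $\nu \in \mathcal{I} \cap \mathcal{S}$ with mean density $\rho$. Decomposing $\nu$ into its shift-ergodic components and sandwiching each component via $\nu^{\rho_m}$ with $\rho_m$ approaching $\rho$ from above and below, each component has density $\rho$ and, by injectivity from Stage 2, coincides with a single measure; thus $\nu$ is itself extremal and $\rho \in \mathcal{R}$. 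The main obstacle is Stage 2: in the absence of explicit invariant measures one cannot directly compute $\nu_1, \nu_2$, and the comparability of two arbitrary extremal members of $\mathcal{I} \cap \mathcal{S}$ must be extracted entirely from the pathwise coupling. Controlling the Cesaro-limiting coupling — in particular ruling out a stationary admixture of oppositely ordered configurations — is where attractiveness, irreducibility, and the shift-ergodicity established in Stage 1 must be combined carefully; the remaining stages are soft consequences of this comparison principle.
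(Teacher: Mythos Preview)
Your overall strategy coincides with the paper's: this is Liggett's coupling scheme, and the paper organizes it exactly around the three facts you isolate---ergodicity/density (your Stage~1), ordered support for an invariant shift-invariant coupling (the core of your Stage~2), and comparability of the marginals. The endpoints and closedness arguments are also essentially the paper's.

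There is, however, a genuine gap in Stage~2. After producing a Ces\`aro limit $\overline{\mu}_\infty\in\mathcal I_{\overline{L}}\cap\overline{\mathcal S}$ supported on $\{\eta\le\xi\}\cup\{\xi\le\eta\}$, you assert that ``shift-ergodicity of the marginals then excludes a nontrivial mixture of these two sets''. This inference is not valid: shift-ergodicity of $\nu_1,\nu_2$ gives no $0$--$1$ law for shift-invariant events of the \emph{coupling} $\overline{\mu}_\infty$, which need not itself be ergodic. The paper closes this gap by inserting an additional step (its Lemma~\ref{lemma_successive}(i)): from extremality of $\nu_1,\nu_2$ in $\mathcal I\cap\mathcal S$ one shows there exists an \emph{extremal} $\overline{\nu}\in(\mathcal I_{\overline{L}}\cap\overline{\mathcal S})_e$ with marginals $\nu_1,\nu_2$; for such $\overline{\nu}$ the events $\{\eta\le\xi\}$ and $\{\xi\le\eta\}$ do have probability $0$ or $1$ (Lemma~\ref{lemma_successive}(ii)), and combined with the ordered-support statement (Lemma~\ref{lemma_successive}(iii)) this yields comparability. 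You need either this extremal-coupling step, or an explicit argument that conditioning $\overline{\mu}_\infty$ on $\{\eta\le\xi\}$ (which is shift- and dynamics-invariant) preserves both marginals---which again uses extremality of $\nu_1,\nu_2$ in $\mathcal I\cap\mathcal S$, not merely their shift-ergodicity.

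A second, smaller imprecision: the sentence ``\eqref{irreducibility_misanthrope} ensures any pair of opposite discrepancies meets in finite time almost surely'' is not the right mechanism in infinite volume. Individual pairs need not meet; what one shows (and what the paper singles out as the core point~(iii)) is that an \emph{invariant, shift-invariant} coupling cannot sustain a positive density of both signs of discrepancy, because irreducibility forces such pairs to annihilate at a positive rate, contradicting stationarity. Your final paragraph correctly flags this as the hard step; the framing just needs adjusting.
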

Following the same general scheme as in \cite{bgrs1}, but with additional difficulties, 
we then obtain the following main result.
\begin{theorem}
\label{th_hydro_bgrs2}
(\cite[Theorem 2.2]{bgrs2}).
Assume $p(.)$ 
satisfies the irreducibility assumption \eqref{irreducibility_misanthrope}.
Then there exists a Lipschitz-continuous function $G:[0,K]\to\R^+$ 
such that the following holds. Let $u_0\in L^{\infty,K}(\R)$, and $(\eta^N_.)_N$ 
be any sequence of processes with generator \eqref{generator}, 
such that the sequence $(\eta^N_0)_N$ has density profile $u_0(.)$. Then, 
the sequence $(\eta^N_{N.})_N$ has hydrodynamic limit given by $u(.,.)$,  the entropy solution to \eqref{hydrodyn} with initial condition $u_0(.)$.
\end{theorem}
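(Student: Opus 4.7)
The plan is to follow the two-step scheme developed in \cite{bgrs1}: first establish the hydrodynamic limit for Riemann initial data using the classification of extremal invariant measures given by Proposition \ref{prop_inv_bgrs2}, then pass to a general profile by a Glimm-type approximation. To begin, I would define the macroscopic flux on $\mathcal R$ by
\[
G(\rho)=\int\sum_{z\in\Z} z\,p(z)\,b(\eta(0),\eta(z))\,d\nu^\rho(\eta),
\]
the mean signed current through the origin under $\nu^\rho$; this is well defined thanks to \emph{(M3)--(M4)}. Since $\mathcal R$ is only a closed subset of $[0,K]$, I would extend $G$ affinely on each connected component of $[0,K]\setminus\mathcal R$. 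Monotonicity of $G$ on $\mathcal R$ inherited from \eqref{order_bgrs2} (via the basic coupling) together with the uniform bound on $b$ yields a Lipschitz constant for the extension.

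For the Riemann problem, I would start the process from the product measure that is $\nu^\lambda$ on $\{x<0\}$ and $\nu^\rho$ on $\{x\ge 0\}$ and prove that the empirical measure at macroscopic time $t$ converges to $R_{\lambda,\rho}(\cdot,t)$, the entropy solution of \eqref{hydrodynamics-av} for the flux $G$ just defined. The core steps, modeled on \cite{av} and \cite{bgrs1}, are: \emph{(a)} any weak Cesaro subsequential limit of the law of $\tau_{\lfloor Nv\rfloor}\eta^N_{Nt}$ is translation invariant and invariant under $L$, hence a convex combination of the measures $\{\nu^\rho:\rho\in\mathcal R\}$ (compactness of $\mathbf X=\{0,\ldots,K\}^\Z$ is used here, which is why we restrict to finite $K$); \emph{(b)} compute the Cesaro-limiting density via attractiveness and coupling with the two constant-profile evolutions on the half-lines, then use a variational formula for the entropy solution whose index set is $\mathcal R$ to identify this convex combination as a Dirac mass on $R_{\lambda,\rho}(\cdot,t)$; \emph{(c)} upgrade the Cesaro limit to a true weak limit by the monotonicity argument of \cite[Prop.~3.4--3.5]{av}, which uses only attractiveness.

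To pass from Riemann to a general $u_0\in L^{\infty,K}(\R)$, I would discretize macroscopic space in cells of size $\delta$ and approximate $u_0$ on each cell by a constant taken in $\mathcal R$. Microscopically this corresponds to starting from a locally homogeneous product measure; between neighboring cells the dynamics is locally well approximated by a Riemann problem. Two ingredients control the interactions between these elementary Riemann problems: the \emph{finite propagation property}, which isolates distant cells over bounded macroscopic times, and the \emph{macroscopic stability} of \cite{bm}, which dominates the $L^1$ distance between coupled microscopic profiles by the corresponding macroscopic $L^1$ quantity. Combined with $L^1$-continuity of the entropy semigroup for \eqref{hydrodyn}, letting $\delta\to 0$ and then $N\to\infty$ yields the full hydrodynamic limit.

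The main obstacle, compared to \cite{bgrs1}, is that $\mathcal R$ need not be an interval, so the variational representation now runs over a possibly disconnected closed set and one must justify the affine extension of $G$ on the gaps. Macroscopically these gaps correspond to entropy shocks joining two densities in $\mathcal R$, and one has to rule out microscopic profiles concentrating mass on ``forbidden'' densities in $[0,K]\setminus\mathcal R$; this amounts to showing, via attractiveness and coupling arguments, that the Cesaro limits in step \emph{(a)} above are supported on $\{\nu^\rho:\rho\in\mathcal R\}$ and not merely on the larger set one might naively consider. This is where the absence of explicit invariant measures is felt most acutely and where I expect the heaviest technical work.
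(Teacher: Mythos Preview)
Your overall architecture---Riemann problem via the \cite{av} scheme plus Proposition~\ref{prop_inv_bgrs2}, then Glimm-type approximation using finite propagation and macroscopic stability---is the one the paper follows. But several points are inaccurate or misplaced, and one of them is a genuine gap.

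First, two slips. The measures $\nu^\rho$ in Proposition~\ref{prop_inv_bgrs2} are \emph{not} product in general; that is precisely the novelty of \cite{bgrs2} over \cite{bgrs1}. So you cannot build the Riemann initial state as ``the product measure that is $\nu^\lambda$ on $\{x<0\}$ and $\nu^\rho$ on $\{x\ge 0\}$''; the correct construction uses a Strassen-type coupling of the ordered family $(\nu^\rho)_{\rho\in\mathcal R}$ (cf.\ Lemma~\ref{big_coupling} in the disordered version). Second, ``monotonicity of $G$ on $\mathcal R$'' is simply false: the flux need not be monotone (think of simple exclusion, where $G(u)=u(1-u)$). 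The Lipschitz bound comes instead from coupling two equilibria and bounding the flux difference by the density difference via \eqref{order_bgrs2} and assumption~\textit{(M3)}. Also, macroscopic stability in \cite{bm} and in the paper is stated for the distance $\Delta$ of \eqref{def_delta}, not the $L^1$ distance; this is what makes the approximation scheme close (see Theorem~\ref{th_glimm} and Proposition~\ref{prop:macro-stab}).

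The more substantial issue is that you have misidentified the ``heaviest technical work''. Ruling out mass on $[0,K]\setminus\mathcal R$ in step~\textit{(a)} is not a difficulty: once the Cesaro limit is in $\mathcal I\cap\mathcal S$, Proposition~\ref{prop_inv_bgrs2} already forces it to be a mixture of $(\nu^\rho)_{\rho\in\mathcal R}$. The real gap in your outline is the passage from \emph{local equilibrium} to \emph{hydrodynamics}. In \cite{bgrs1} (and in \cite{kl}) this passage is trivial because the $\nu^\rho$ are product, so a block average under $\nu^\rho$ concentrates at $\rho$ by the law of large numbers. Here the $\nu^\rho$ are unknown and non-product; you need a separate \emph{averaging argument}, based on the spatial ergodicity \eqref{ergodic_bgrs2}, to show that weak convergence of $\tau_{\lfloor Nv\rfloor}\eta^N_{Nt}$ to $\nu^{u(v,t)}$ actually yields convergence of the empirical measure. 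The paper flags this explicitly. A second point you gloss over is that the approximation from Riemann to Cauchy must be done in \emph{two} layers: first approximate $u_0$ by an $\mathcal R$-valued step function (so that the Riemann result applies at each interface), and then pass from $\mathcal R$-valued to arbitrary $[0,K]$-valued data using $\Delta$-stability and the affine extension of $G$ on the gaps.
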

The drawback is that we have (and it will also be the case in the following papers) to 
restrict ourselves to dynamics with compact state space  
to prove hydrodynamics with general initial data.  
This is necessary to define the macroscopic flux outside $\mathcal R$, 
 by a linear interpolation; this makes this flux 
 Lipschitz continuous, a minimal requirement to define entropy solutions. 
 We have to consider  a
$\mathcal R$-valued Riemann problem, for which   
we prove conservation of local equilibrium. 
Then we use an averaging argument to prove hydrodynamics
  (in the absence of product invariant measures, 
 the passage from local equilibrium to hydrodynamics  is no longer a consequence of  \cite{kl}). 
For  general initial profiles, we have to refine the approximation procedure
of \cite{bgrs1}: we go first to $\mathcal R$-valued entropy solutions,  then to
arbitrary entropy solutions.\\ \\
In \cite{bgrs3}, by a refinement of our method,
we obtain a  \textit{strong}  (that is an almost sure) hydrodynamic limit, when starting
from an arbitrary initial profile.
 By almost sure, we mean that we construct the process with generator 
\eqref{generator} on an explicit probability space defined as the product 
$(\Omega_0\times\Omega,{\mathcal F}_0\otimes{\mathcal F},\Prob_0\otimes\Prob)$, 
where $(\Omega_0,{\mathcal F}_0,\Prob_0)$ is a probability space used to construct 
random initial states, and $(\Omega,{\mathcal F},\Prob)$ is a Poisson space used 
to construct the evolution from a given state.
\begin{theorem}\label{th_hydro_bgrs3}(\cite[Theorem 2.1]{bgrs3})
Assume $p(.)$ has finite first moment  and satisfies the irreducibility assumption
\eqref{irreducibility_misanthrope}. Then
the following holds, where $G$ is the same function as in Theorem \ref{th_hydro_bgrs2}.
Let $(\eta^N_0,\,N\in\N)$ be any sequence of ${\mathbf X}$-valued random
variables on a probability space $(\Omega_0,\mathcal F_0,\Prob_0)$
such that
\be\label{initial_profile_vague}
\lim_{N\to\infty}\pi^N(\eta^N_0)(dx)=
u_0(.)dx\quad\Prob_0\mbox{-a.s.}\ee
for some measurable $[0,K]$-valued profile $u_0(.)$.i
Then  the
$\Prob_0\otimes\Prob$-a.s. convergence 
\be \label{later_profile_strong}
\lim_{N\to\infty}\pi^N(\eta^N_{Nt})(dx)=u(.,t)dx
\ee
 holds uniformly on all bounded time intervals, where $(x,t)\mapsto
u(x,t)$ denotes the unique  entropy solution to \eqref{hydrodyn}  
with initial condition $u_0(.)$.
\end{theorem}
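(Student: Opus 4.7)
The plan is to upgrade the in-probability hydrodynamic limit of Theorem \ref{th_hydro_bgrs2} to an almost-sure, uniform-in-time statement, using the graphical construction together with attractiveness, macroscopic stability, and a Glimm-type approximation. First, I would realize the dynamics pathwise: on the Poisson space $(\Omega,\mathcal{F},\Prob)$, for each deterministic initial configuration $\eta_0\in\mathbf{X}$ construct $t\mapsto\eta_t(\omega,\eta_0)$ as a measurable map that is monotone in $\eta_0$ (via the basic coupling) and obeys a finite propagation property (from the finite first moment of $p$). This cleanly separates the initial randomness on $(\Omega_0,\mathcal{F}_0,\Prob_0)$ from the dynamical randomness on $(\Omega,\mathcal{F},\Prob)$.

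The core step is to establish a.s. hydrodynamics for a countable family of simple initial laws, then extend by approximation. Fix a countable dense subset $\mathcal{R}_0\subset\mathcal{R}$. For each pair $(\lambda,\rho)\in\mathcal{R}_0^2$, consider the Riemann-type law that is $\nu^\lambda$-distributed on $\{x<0\}$ and $\nu^\rho$-distributed on $\{x\geq 0\}$. Theorem \ref{th_hydro_bgrs2} already gives, for each fixed $t$, convergence in probability of $\pi^N(\eta^N_{Nt})$ to the self-similar entropy solution $u(\cdot,t)$. To upgrade to $\Prob_0\otimes\Prob$-a.s. convergence, I would use the ergodicity of the extremal measures $\nu^\lambda,\nu^\rho$ together with variance bounds on $\int\psi\,d\pi^N(\eta^N_{Nt})$ (controlled by the boundedness of $\eta$ and the finite first moment of $p$) to extract a fast subsequence on which Borel--Cantelli applies. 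A diagonal argument over countably many pairs $(\lambda,\rho)$, rational times, and test functions from a countable dense subset of $C^0_K(\R)$ produces a single full-measure event on which all these Riemann limits hold simultaneously.

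To pass from Riemann to arbitrary data, I would make the approximation scheme of \cite{bgrs1,bgrs2} pathwise. Given $u_0\in L^{\infty,K}(\R)$ and $\varepsilon>0$, approximate $u_0$ in $L^1_{\mathrm{loc}}$ by a piecewise constant profile $u_0^\varepsilon$ with values in $\mathcal{R}_0$, and couple $\eta^N_0$ with a configuration $\tilde\eta^N_0$ distributed as a patching of the corresponding $\nu^{\rho_i}$. On a short macroscopic time window during which the Riemann fans emanating from neighbouring interfaces of $u_0^\varepsilon$ do not interact, the evolution of $\tilde\eta^N$ decouples into a superposition of independent Riemann problems, each handled by the previous step. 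The macroscopic stability property of \cite{mrs}, applied pathwise, then controls $d_v(\pi^N(\eta^N_{Nt}),\pi^N(\tilde\eta^N_{Nt}))$ $\Prob$-a.s. in terms of the $L^1$-distance between $u_0$ and $u_0^\varepsilon$, up to errors that vanish as $N\to\infty$. Iterating along successive time windows in Glimm fashion, and letting $\varepsilon\to 0$ along a countable sequence, yields \eqref{later_profile_strong} almost surely. Uniformity on bounded time intervals comes from the $L^1$-Lipschitz continuity of $t\mapsto u(\cdot,t)$ combined with a microscopic equicontinuity of $t\mapsto\pi^N(\eta^N_{Nt})$ in the vague distance, uniform in $N$, again derived from the finite first moment of $p$.

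The main obstacle, I expect, is the pathwise character of macroscopic stability: the bound on $d_v$ between two coupled processes must hold $\Prob$-almost surely with quantitative control in $N$, not merely in probability or in expectation, in order for the Glimm approximation to be absorbed inside the a.s. limit. This is precisely where the strengthened version of macroscopic stability established in \cite{mrs} becomes indispensable, and where the pathwise graphical construction shows its full advantage over the abstract semigroup viewpoint used in \cite{bgrs2}.
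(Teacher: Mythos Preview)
Your overall architecture --- a.s.\ Riemann hydrodynamics first, then a pathwise Glimm-type scheme controlled by macroscopic stability --- matches the paper's. But the Riemann step, as you sketch it, has a real gap.

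You propose to upgrade the in-probability Riemann result of Theorem~\ref{th_hydro_bgrs2} to an almost-sure one via ``variance bounds on $\int\psi\,d\pi^N(\eta^N_{Nt})$'' followed by Borel--Cantelli along a fast subsequence. The difficulty is that in this setting the invariant measures $\nu^\rho$ are \emph{not} product (Proposition~\ref{prop_inv_bgrs2} gives no structure beyond stochastic ordering), so you have no control on spatial correlations of $\eta^N_{Nt}$ and hence no decay of variance with $N$; boundedness of $\eta$ and a first-moment assumption on $p$ do not by themselves yield any summable bound. Moreover, even if a subsequence argument worked for a fixed Riemann problem, the Glimm iteration requires the a.s.\ Riemann limit to hold \emph{after an arbitrary macroscopic space-time shift} $\theta'_{\lfloor Nx_0\rfloor,Ns_0}$ (this is exactly the content of Proposition~\ref{corollary_2_2}), because at each step $t_k$ you restart Riemann problems from configurations produced by the earlier dynamics. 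Your diagonalization over countable data does not address this.

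The paper circumvents both issues by abandoning the bootstrap-from-weak-law idea and proving the Riemann limit directly as a strong law for \emph{currents} (Proposition~\ref{proposition_2_2}). The current across a moving observer is related to the empirical measure via \eqref{diff_currents_micro}, and is analysed through a martingale decomposition plus two ingredients that replace your variance bound: a uniform large-deviation upper bound for space-time empirical measures (Lemma~\ref{deviation_empirical}), which gives exponential decay and hence Borel--Cantelli along the full sequence without subsequences, and a space-time ergodic theorem (Proposition~\ref{prop_ergo}). Monotonicity of currents (Lemma~\ref{current_comparison}) then pins down the limit as the variational expression~\eqref{limiting_current_0}. A secondary point: the approximation step is carried out in the metric $\Delta$ of~\eqref{def_delta}, not in $d_v$; macroscopic stability (Proposition~\ref{prop:macro-stab}) is formulated for $\Delta$ and gives an exponential probability bound, which is what makes the step-by-step error control~\eqref{wearegoing_1} summable.
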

Our constructive approach requires new ideas since the sub-additive ergodic
theorem (central to the few previous existing proofs for strong hydrodynamics) 
is no longer effective in our setting. 
We work with the graphical
representation of the dynamics, on which we couple an arbitrary number of processes,
thanks to the  \textit{complete monotonicity property}  of the dynamics.
To solve the 
$\mathcal R$-valued Riemann problem, we combine proofs of almost sure analogues
of the results of \cite{av}, rephrased for currents which become our centerpiece, 
with a space-time ergodic theorem for particle systems
and large deviation results for the empirical measure.
In the approximation steps, new error analysis is necessary: In particular, 
we have to do an explicit time discretization
(vs. the ``instantaneous limit'' of \cite{bgrs1,bgrs2}), 
we need estimates uniform in time, and each approximation step
requires a control with exponential bounds.
\\ \\
 In \cite{mrs} we derive the macroscopic stability property 
when the particles' jump kernel $p(.)$ has a finite first moment and a positive mean.
We also extend under those hypotheses the ergodic theorem for densities 
due to \cite{rez} that we use in \cite{bgrs3}. Finally, we prove the 
finite propagation property when $p(.)$ has a finite third moment.
This enables us to get rid of the finite range assumption on $p$
required so far, and to extend the strong hydrodynamic result
of \cite{bgrs3} when the particles' jump kernel has a finite third moment
 and a positive mean. \\ \\
In \cite{bgrs4}, we  derive, thanks to the tools introduced in \cite{bgrs3},
a quenched strong hydrodynamic limit for a bounded 
attractive particle system on $\Z$  evolving in a random ergodic environment.
 (This result, which contains Theorems \ref{th_hydro_bgrs2} and 
\ref{th_hydro_bgrs3} above, is stated later on in this paper as Theorem \ref{th:hydro}).
 Our method is robust with respect to
the model and disorder (we are not restricted to site or
bond disorder).
We introduce a general framework  to describe the rates
of the dynamics, which applies to a large class of models.
To overcome the difficulty of the
simultaneous loss of translation invariance and  lack of
knowledge of explicit invariant measures for the disordered system,
we study  a joint
disorder-particle process, which is translation invariant.
We characterize its extremal invariant and translation invariant measures,
and prove its strong hydrodynamic limit.
This implies the quenched hydrodynamic result we look for.\\
We illustrate our results on various examples. 
\section{Main results}\label{sec:results}
 The construction of interacting particle systems is done either 
analytically, through generators and semi-groups (we refer to \cite{lig1}
for systems with compact state space, and to \cite{ls,andjel,fgs}
otherwise), or through a graphical representation. Whereas 
the former is sufficient
to derive hydrodynamic limits in a weak sense,  which  is done in \cite{bgrs1,bgrs2},
the latter is necessary to derive strong hydrodynamic limits, 
which  is done in \cite{bgrs3,bgrs4}. First, we  explain
in Subsection \ref{subsec:graphical} the graphical construction,
then in Subsection \ref{subsec:hydro-inv} we detail our results 
from  \cite{bgrs4} 
on invariant measures for the dynamics and hydrodynamic limits. \\ 
For simplicity, we restrict ourselves in this section 
to the misanthropes process
with site disorder,
 which corresponds to the generator \eqref{generator}. 
However, considering only the necessary properties of the misanthropes
process required to prove our hydrodynamic results, 
it is possible to deal with more general models 
including the $k$-step exclusion process,
by embedding them in a global framework, 
in which the dynamics is viewed as a random transformation of the configuration; 
the latter simultaneously defines the graphical construction {\em and} generator.
 More general forms of random environments than site disorder can also be considered. 
In Subsection \ref{subsec:required} we list the above required
properties of misanthropes processes, and we defer the 
study of the $k$-step exclusion process 
and more general models to Section \ref{sec:othermodels}. 
\subsection{Graphical construction}\label{subsec:graphical}
 This subsection is based on \cite[Section 2.1]{bgrs3}. 
We now describe the graphical construction
 (that is the pathwise construction on a Poisson space) of the  system given by
\eqref{generator}, which uses a Harris-like representation  (\cite{h,har}; 
see for instance  
\cite{afs, frigray, bmrs2, swart}  for details and
justifications). This enables us to define the evolution from
arbitrarily many different initial configurations simultaneously on the same
probability space, in a way that depends monotonically on these initial configurations. \\ \\ 
 We consider the probability space $(\Omega,{\mathcal
F},\Prob)$ of measures $\omega$ on $\R^+\times\Z^2\times[0,1]$ of
the form
\[
\omega(dt,dx,dz,du)=\sum_{m\in\N}\delta_{(t_m,x_m,z_m,u_m)}
\]
where  $\delta_{(\cdot)}$ denotes Dirac measure, and
$(t_m,x_m,z_m,u_m)_{m\ge 0}$ are pairwise distinct and form a
locally finite set. The $\sigma$-field $\mathcal F$ is generated
by the mappings $\omega\mapsto\omega(S)$ for Borel sets $S$. The
probability measure $\Prob$ on $\Omega$ is the one that makes
$\omega$ a Poisson process with intensity
\[
m(dt,dx,dz,du)=
||b||_\infty\lambda_{\R^+}(dt)\times\lambda_{\Z}(dx)\times
p(dz)\times\lambda_{[0,1]}(du) \]
where $\lambda$ denotes either the Lebesgue or the counting
measure.
We denote by $\Exp$ the corresponding expectation. Thanks to
assumption  {\em (M4)} on page \pageref{assumptions_M}, 
we can proceed as in \cite{afs, frigray} (for a construction with a weaker assumption
we refer to \cite{bmrs2, swart}):  for $\Prob$-a.e. $\omega$, there exists a
unique mapping
\be \label{unique_mapping_0} (\alpha,\eta_0,t)\in{\mathbf A}\times{\bf
X}\times\R^+\mapsto\eta_t= \eta_t(\alpha,\eta_0,\omega) \in{\mathbf X} \ee
satisfying: (a) $t\mapsto \eta_t(\alpha,\eta_0,\omega)$ is
right-continuous; (b) $\eta_0(\alpha,\eta_0,\omega)=\eta_0$;  (c) for
$t\in\R^+$, $(x,z)\in\Z^2$, $\eta_t=\eta_{t^-}^{x,x+z}$ if
\begin{equation}
\label{update} \exists u\in[0,1]:\,\omega\{(t,x,z,u)\}=1\mbox{ and
} u\leq\displaystyle{\alpha(x)\frac{b(\eta_{t^-}(x),\eta_{t^-}(x+z))}
{||b||_\infty}}
\end{equation}
and (d) for all $s,t\in\R^{+*}$ and $x\in\Z$,
\begin{equation}
\label{noupdate} \omega\{[s,t]\times Z_x\times (0,1)\}=0 \Rightarrow
\forall v\in[s,t],\eta_v(x)=\eta_s(x)
\end{equation}
where
\[
Z_x:=\left\{(y,z)\in\Z^2: y=x \mbox{ or } y+z=x \right\}
\]
In short, \eqref{update} tells how the state of the system can be
modified by an ``$\omega$-event'', and \eqref{noupdate} says that
the system cannot be modified outside $\omega$-events.\\ \\
Thanks to assumption \textit{(M2)}  on page \pageref{assumptions_M},   we have that
\be \label{attractive_1}
(\alpha,\eta_0,t)\mapsto\eta_t(\alpha,\eta_0,\omega) \mbox{ is
nondecreasing w.r.t. } \eta_0\ee
 Property \eqref{attractive_1} 
implies \eqref{attractive_2}, that is, attractiveness.
 But it is more powerful: it  implies the {\em complete monotonicity} 
property (\cite{fm, dplm}),
that is, existence of a monotone Markov coupling for an \textit{arbitrary} number 
of processes with generator  \eqref{generator},  which is necessary in our proof 
of strong hydrodynamics for
general initial profiles.  
The coupled process can be defined by a Markov generator, 
as in \cite{coc} for two components,  that is
\begin{eqnarray}\label{def:gen-coupl}
&&\overline{L}_\alpha f(\eta,\xi)
=\cr
&  &\sum_{x,y\in\Z:\,x\neq y}
 \Bigl\{\alpha(x)p(y-x)[b(\eta(x),\eta(y))\wedge b(\xi(x),\xi(y))]
 \left[f(\eta^{x,y},\xi^{x,y})-f(\eta,\xi)\right]\cr
&& +  \alpha(x)p(y-x)[b(\eta(x),\eta(y))-b(\xi(x),\xi(y))]^+\left[f(\eta^{x,y},\xi)-f(\eta,\xi)\right]\cr
&& +   \alpha(x)p(y-x)[b(\xi(x),\xi(y))- b(\eta(x),\eta(y))]^+
\left[f(\eta,\xi^{x,y})-f(\eta,\xi)\right]\Bigr\}
\end{eqnarray} 
One may further introduce an ``initial'' probability space
$(\Omega_0,{\mathcal F}_0,\Prob_0)$, large enough to construct
random initial configurations $\eta_0=\eta_0(\omega_0)$ for
$\omega_0\in\Omega_0$.
The general process with random initial configurations is
constructed on the enlarged space
$(\widetilde{\Omega}=\Omega_0\times\Omega,\widetilde{\mathcal
F}=\sigma({\mathcal F}_0\times{\mathcal
F}),\widetilde{\Prob}=\Prob_0\otimes\Prob)$ by setting
\[
\eta_t(\alpha,\widetilde{\omega})=\eta_t(\alpha,\eta_0(\omega_0),\omega)\]
for $\widetilde{\omega}=(\omega_0,\omega)\in\widetilde{\Omega}$.
One can show (see  for instance \cite{bmrs2, frigray, swart})  that this defines
a Feller process with generator \eqref{generator}:
 that is for any $t\in\R^+$ and $f\in C(\mathbf{X})$
(the set of continuous functions on $\mathbf{X}$),
 $S_\alpha(t)f\in C(\mathbf{X})$ where
$S_\alpha(t)f(\eta_0)=\Exp[f(\eta_t(\alpha,\eta_0,\omega))]$.
If $\eta_0$ has distribution $\mu_0$, then the process thus
constructed is Feller with generator \eqref{generator} and initial
distribution $\mu_0$.\\ \\
We define on $\Omega$ the {\sl space-time shift} $\theta_{x_0,t_0}$:
for any $\omega\in\Omega$, for any $(t,x,z,u)$
\be\label{def:spacetimeshift}
(t,x,z,u)\in\theta_{x_0,t_0}\omega\mbox{ if and only if
}(t_0+t,x_0+x,z,u)\in\omega\ee
 where $(t,x,z,u)\in\omega$ means $\omega\{(t,x,z,u)\}=1$. By its very definition, 
 the mapping introduced in
\eqref{unique_mapping_0} enjoys the following properties, for all
 $s,t\geq 0$, $x\in\Z$ and 
 $(\eta,\omega)\in{\mathbf X}\times{\Omega}$:
\be \label{mapping_markov}
\eta_s(\alpha,\eta_t(\alpha,\eta,\omega),\theta_{0,t}\omega)=\eta_{t+s}(\alpha,\eta,\omega)
\ee
which  implies Markov property, and
\be \label{mapping_shift}
\tau_x\eta_t(\alpha,\eta,\omega)=\eta_t(\tau_x\alpha,\tau_x\eta,\theta_{x,0}\omega)
\ee
which  yields the \textit{commutation property}
\be\label{commutation}
L_\alpha\tau_x=\tau_x L_{\tau_x\alpha}
\ee
so that  $S_\alpha(t)$  and $\tau_x$ commute.
\subsection{Hydrodynamic limit and invariant measures}\label{subsec:hydro-inv}
 This section is based on \cite[Sections 2, 3]{bgrs4}. 
A central issue in interacting particle systems, 
and more generally in the theory of Markov processes, 
is the characterization of invariant measures (\cite{lig1}). 
Besides, this characterization plays a crucial role in the 
derivation of hydrodynamic limits (\cite{kl}).
 We detail here our results on these two questions.\\ \\
\textbf{Hydrodynamic limit.}
 We first state the strong hydrodynamic behavior of the process 
 with quenched  site  disorder. 
\begin{theorem}\label{th:hydro}  (\cite[Theorem 2.1]{bgrs4}). 
Assume  $K<+\infty$,  $p(.)$ has finite third moment, and satisfies the irreducibility assumption
 \eqref{irreducibility_misanthrope}. 
 Let $Q$ be an ergodic probability distribution on $\mathbf A$. Then
there exists a Lipschitz-continuous function  $G^Q$ on
$[0,K]$ defined  in  \eqref{flux} and \eqref{inter_1}--\eqref{inter_2}  
 below (depending only on  $p(.)$, $b(.,.)$
   and $Q$)
such that the following holds.
Let $(\eta^N_0,\,N\in\N)$ be a sequence of ${\mathbf X}$-valued random
variables on a probability space $(\Omega_0,\mathcal F_0,\Prob_0)$
such that
\be\label{initial_profile_vague}
\lim_{N\to\infty}\pi^N(\eta^N_0)(dx)=
u_0(.)dx\quad\Prob_0\mbox{-a.s.}\ee
for some measurable $[0,K]$-valued profile $u_0(.)$.
Then for  $Q$-a.e.  $\alpha\in{\mathbf A}$, the
$\Prob_0\otimes\Prob$-a.s. convergence
\be \label{later_profile_strong}
\lim_{N\to\infty}\pi^N(\eta_{Nt}(\alpha,\eta^N_0(\omega_0),\omega))(dx)=u(.,t)dx
\ee
 holds uniformly on all bounded time intervals, where $(x,t)\mapsto
u(x,t)$ denotes the unique entropy solution with initial condition
$u_0$ to the conservation law
\be \label{hydrodynamics} \dt u+\dx[G^Q(u)]=0 \ee
\end{theorem}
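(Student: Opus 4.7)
\textbf{Proof plan for Theorem \ref{th:hydro}.}
The plan is to combine the graphical construction of Subsection \ref{subsec:graphical} with a Riemann--to--Cauchy strategy analogous to \cite{bgrs3}, but carried out on the joint disorder--particle process so as to recover translation invariance lost at the level of $\eta_t$ alone. More precisely, I would first introduce the Markov process $(\alpha,\eta_t)_{t\ge 0}$ on $\mathbf{A}\times\mathbf{X}$, whose dynamics acts trivially on $\alpha$ and according to $L_\alpha$ on $\eta$. By \eqref{commutation}, this process \emph{is} translation invariant, so one may meaningfully speak of $({\mathcal I}\cap{\mathcal S})_e$ for it. The first step is then to prove a quenched analogue of Proposition \ref{prop_inv_bgrs2}: using attractiveness, the Markov coupling \eqref{def:gen-coupl} and a disorder-averaged monotonicity argument, one obtains a closed set $\mathcal R\subset[0,K]$ and a family $\{\nu^\rho_Q:\rho\in\mathcal R\}$ of extremal invariant translation invariant measures on $\mathbf A\times\mathbf X$ that project to $Q$ on $\mathbf A$, satisfy the mean-density relation $\nu^\rho_Q$-a.s.\ and are stochastically ordered in $\rho$. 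The macroscopic flux $G^Q$ is then defined on $\mathcal R$ by the equilibrium expectation of the microscopic current under $\nu^\rho_Q$ and extended by linear interpolation on $[0,K]\setminus\mathcal R$ so as to be Lipschitz.

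The second step is to treat the $\mathcal R$-valued Riemann problem $u_0=R_{\lambda,\rho}$ with $\lambda,\rho\in\mathcal R$. Following the strategy of \cite{bgrs3}, I would reformulate everything in terms of \emph{currents} across moving observers, derive an almost sure current identity by applying a space--time ergodic theorem to the joint process, and identify the Cesaro limit of the empirical measure inside a macroscopic box as a convex combination of the $\nu^\rho_Q$. Large-deviation estimates for the empirical density under these product-in-spirit (but not literally product) measures upgrade this identification to an almost sure statement, and the variational formula of \cite{bgrs1,bgrs2} then pins down the correct convex combination as the Dirac mass on the entropy solution of \eqref{hydrodynamics}. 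Attractiveness promotes the conservation of local equilibrium into an almost sure Riemann hydrodynamic limit.

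The third step is the passage from Riemann to Cauchy data. Approximate $u_0\in L^{\infty,K}(\R)$ by piecewise-constant profiles taking finitely many values in $\mathcal R$, and couple, on the same Poisson space $\Omega$, one process per cell of the discretization together with the process of interest via the complete monotonicity property \eqref{attractive_1}. The key analytic inputs are the macroscopic stability property from \cite{mrs} (which controls the $L^1$-distance between macroscopic profiles at later times by their initial distance), and the finite propagation property under the finite third-moment assumption of \cite{mrs} (which localizes errors). A time discretization together with exponential bounds on each approximation step, as in \cite{bgrs3}, then yields uniform convergence on bounded time intervals and upgrades the Riemann hydrodynamics to the general Cauchy statement \eqref{later_profile_strong}, with a single $Q$-full-measure set of $\alpha$'s working for all initial profiles simultaneously via separability.

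The main obstacle, I expect, is the first step: characterizing $({\mathcal I}\cap{\mathcal S})_e$ for the joint process and proving the density ergodicity \eqref{ergodic_bgrs2} without any explicit product formula and in the presence of disorder. All later steps are conditional on having good enough ``equilibrium'' measures $\nu^\rho_Q$; producing them requires combining the abstract monotonicity machinery used in \cite{bgrs2} with an ergodic decomposition with respect to $Q$, and checking that the resulting family is rich enough (in particular, $\{0,K\}\subset\mathcal R$) for the Glimm-style approximation and macroscopic stability to close the argument.
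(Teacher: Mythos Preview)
Your plan is correct and follows essentially the same route as the paper: pass to the joint disorder--particle process to recover translation invariance and characterize $(\mathcal I_{\mathfrak L}\cap\mathcal S)_e$ (Proposition \ref{invariant}), establish Riemann hydrodynamics via an almost sure law of large numbers for currents that combines a space--time ergodic theorem with a large-deviation upper bound for the empirical measure (Propositions \ref{corollary_2_2}--\ref{proposition_2_2} and Lemmas \ref{lemma_empirical}, \ref{deviation_empirical}), then run a Glimm-type time discretization using macroscopic stability and finite propagation from \cite{mrs} (Proposition \ref{hydro_finite}). One technical correction worth flagging: the stability metric in the Cauchy step is not the $L^1$ distance but the distance $\Delta$ of \eqref{def_delta} (a sup over half-line masses, i.e.\ an $L^\infty$ bound on the integrated density), which is exactly what macroscopic stability (Proposition \ref{prop:macro-stab}) controls and what matches the Hamilton--Jacobi monotonicity \eqref{deltastab} on the PDE side.
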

 The  $\Prob$-almost sure convergence in Theorem \ref{th:hydro} 
refers to the graphical construction of Subsection \ref{subsec:graphical}, 
and is stronger than the usual notion of hydrodynamic limit, which is a 
convergence in probability  (cf. Definition \ref{def:hydro-lim}). 
The strong hydrodynamic limit implies the weak one (\cite{bgrs3}). 
This leads to the following weaker but more usual statement, 
which also has the advantage of not depending on a particular construction of the process.
\begin{theorem}\label{th:hydro_weak}
Under assumptions and notations of Theorem \ref{th:hydro}, 
there exists a subset ${\mathbf A}'$ of $\mathbf A$, with $Q$-probability $1$, 
such that the following holds for every $\alpha\in{\mathbf A}'$.
For any $u_0\in L^\infty(\R)$, and any sequence $\eta^{N}_.=(\eta^N_t)_{t\geq 0}$ 
of processes with generator \eqref{generator} satisfying the convergence in probability

\be\label{initial_profile_weak}
\lim_{N\to\infty}\pi^N(\eta^N_0)(dx)=
u_0(.)dx,\ee
one has the locally uniform convergence in probability
\be \label{later_profile}
\lim_{N\to\infty}\pi^N\left(\eta^N_{Nt}\right)(dx)=u(.,t)dx
\ee
\end{theorem}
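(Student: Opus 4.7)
My plan is to deduce Theorem \ref{th:hydro_weak} from Theorem \ref{th:hydro} via a Skorokhod-representation and subsequence argument, exploiting the fact that convergence in probability of a sequence of random elements of a metric space is equivalent to the property that every subsequence has a further subsequence converging almost surely, possibly on an enlarged probability space.

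First, I would identify the universal set $\mathbf{A}'$. Theorem \ref{th:hydro} produces, for each fixed sequence $(\eta^N_0)_N$ with a.s.\ density profile $u_0$, a $Q$-null set outside which the hydrodynamic limit holds, but a priori this null set depends on $u_0$ and on the sequence. To obtain a single null set valid for all admissible initial data, I would fix a countable family of ``test'' initial profiles---say piecewise-constant step functions with rational values in $[0,K]$ and rational break-points---together with canonical product measures realising them, apply Theorem \ref{th:hydro} along this family, and take $\mathbf{A}'$ to be the countable intersection of the resulting full-measure sets. Extension to arbitrary $u_0 \in L^\infty(\R)$ and arbitrary random initial data would then follow from the attractive coupling \eqref{def:gen-coupl} and the $L^1$-stability of entropy solutions: any $u_0$ can be monotonically bracketed by elements of the countable family, and the corresponding bracketing carries over to the trajectories, uniformly on compact space-time sets thanks to macroscopic stability and the finite propagation property. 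This bracketing step is the principal technical obstacle, since care is needed to ensure that the coupled microscopic trajectories do not spread macroscopically beyond what the entropy solution permits; but this is precisely the content of the tools developed in \cite{mrs} and already used in \cite{bgrs3,bgrs4}.

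Now fix $\alpha \in \mathbf{A}'$ and a sequence $(\eta^N_.)_N$ with generator \eqref{generator} satisfying the in-probability hypothesis \eqref{initial_profile_weak}. Given an arbitrary subsequence $(N_k)$, I would further extract $(N_{k_j})$ along which $\pi^{N_{k_j}}(\eta^{N_{k_j}}_0) \to u_0(.)dx$ in law with respect to the vague topology. Since the initial configurations may live on different probability spaces, I would use Skorokhod's representation theorem to realize this convergence pointwise on a common probability space $(\Omega_0,\mathcal F_0,\Prob_0)$, producing configurations $\tilde\eta^{N_{k_j}}_0$ with the same marginal laws as the originals and such that $\pi^{N_{k_j}}(\tilde\eta^{N_{k_j}}_0)\to u_0(.)dx$ almost surely. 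I would then run the dynamics from the $\tilde\eta^{N_{k_j}}_0$ using the graphical construction of Subsection \ref{subsec:graphical} on an independent Poisson space $(\Omega,\mathcal F,\Prob)$.

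Because the joint law of $\pi^{N_{k_j}}(\tilde\eta^{N_{k_j}}_{N_{k_j}\cdot})$ coincides with that of $\pi^{N_{k_j}}(\eta^{N_{k_j}}_{N_{k_j}\cdot})$, to prove the locally uniform convergence in probability \eqref{later_profile} it suffices to establish $\Prob_0\otimes\Prob$-almost sure convergence for the tilded processes. But this is exactly the content of Theorem \ref{th:hydro} applied to $\alpha \in \mathbf{A}'$ and to the sequence $(\tilde\eta^{N_{k_j}}_0)_j$. Since the subsequence $(N_k)$ was arbitrary and the limit $u(.,t)dx$ does not depend on its choice, the standard subsequence characterization of convergence in probability yields the locally uniform convergence in probability of the full sequence, as claimed.
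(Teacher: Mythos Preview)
The paper gives essentially no proof here: it simply states that ``the strong hydrodynamic limit implies the weak one'' and cites \cite{bgrs3}. Your Skorokhod/subsequence argument is exactly the standard content of that implication and is correct: once a universal full-measure set $\mathbf{A}'$ is available on which the conclusion of Theorem~\ref{th:hydro} holds for \emph{every} almost-surely converging initial sequence, convergence in probability of the initial data transfers to convergence in probability of the trajectory via subsequence extraction, Skorokhod realization, and the fact that the law of the process at time $Nt$ depends only on the law of $\eta^N_0$ and on the generator $L_\alpha$.

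Where your argument is shakier is the construction of $\mathbf{A}'$ itself. Monotone bracketing of an arbitrary $u_0\in L^{\infty,K}(\R)$ by step functions fails in general: if $u_0=\mathbf{1}_C$ for a fat Cantor set $C\subset[0,1]$, any step function below $u_0$ vanishes on $[0,1]$, so no $L^1_{\rm loc}$-close lower step approximant exists. Thus attractive coupling alone does not close the argument. The tools you mention next, however, do: macroscopic stability (Proposition~\ref{prop:macro-stab}) controls the evolution in the $\Delta$-metric \eqref{def_delta} rather than in the pointwise order, and every $u_0$ \emph{can} be $\Delta$-approximated by step functions (Lemma~\ref{cor_approx} and the reduction described at the start of Section~\ref{Cauchy}). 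So the fix is to replace ``monotone bracketing'' by ``$\Delta$-approximation plus macroscopic stability'', exactly as in the proof of Proposition~\ref{hydro_finite}.

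An even cleaner route, and the one implicit in the proof structure of Section~\ref{proof_hydro}, is to observe that the only place a $Q$-null set enters the proof of Theorem~\ref{th:hydro} is Proposition~\ref{corollary_2_2}, applied for parameters $(\lambda,\rho,t,s_0,x_0)$ ranging over $\mathcal R^Q$ and $\R$. Restricting to a countable dense set of parameters and intersecting the resulting $Q$-full sets already yields a universal $\mathbf{A}'$; the passage from the Riemann problem to the general Cauchy problem then produces only $\Prob$-null sets, which do not depend on $\alpha$. This avoids applying Theorem~\ref{th:hydro} as a black box to an auxiliary countable family.
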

\begin{remark}\label{remark_bgrs2}
Theorem \ref{th_hydro_bgrs2} (resp. Theorem \ref{th_hydro_bgrs3}) 
is a special case of Theorem \ref{th:hydro_weak} (resp. Theorem \ref{th:hydro}). 
Indeed, it suffices to consider the ``disorder'' distribution $Q$ that is 
the Dirac measure supported on the single environment $\alpha(x)\equiv 1$.
\end{remark}
Note that the statement of Theorem \ref{th:hydro_weak} is stronger than 
hydrodynamic limit  in the sense of Definition \ref{def:hydro-lim}, 
because it states convergence of the empirical measure {\em process} 
rather then convergence at every fixed time. 
 To define the {\sl macroscopic flux} $G^Q$, we first  define
 the {\sl microscopic flux}  as follows. The generator 
  \eqref{generator}  can be decomposed as a sum of translates
of a ``seed'' generator centered around the origin:
\be\label{decomp_gengen}
L_\alpha=\sum_{x\in\Z}L^x_\alpha
\ee
 Note that such a decomposition is not unique. A natural choice of
 component $L_\alpha^x$ at $x\in\Z$ is given in the case of \eqref{generator} by 
\be\label{shifted_component}
L_\alpha^x f(\eta)=\alpha(x)\sum_{z\in\Z}p(z)\left[f\left(\eta^{x,x+z}\right)-f(\eta)\right]
\ee 
We then define $j$ to be either of the functions $j_1$, $j_2$ defined below:
\be\label{def_f}
j_1(\alpha,\eta):=L_\alpha\left[\sum_{x>0}\eta(x)\right], \quad
j_2(\alpha,\eta):=L_\alpha^0\left[
\sum_{x\in\Z}x\eta(x)
\right]
\ee
The definition of $j_1$ is partly formal, because the function 
$\sum_{x>0}\eta(x)$ does not belong to the domain of the generator
$L_\alpha$. Nevertheless, the formal computation gives rise to a well-defined 
function $j_1$, because  the rate $b$ is a local function. 
Rigorously, one defines $j_1$ by difference, as the unique function such that
\be\label{j1_bis}
j_1-\tau_x j_1= L_\alpha  \left[\sum_{y=1}^x\eta(y)\right]
\ee
for every $x\in\N$. The action of the generator in \eqref{j1_bis} is now well defined, 
because we have a local function that belongs to its domain.\\ \\
 In the case of \eqref{generator}, we obtain the following microscopic flux functions:
\begin{eqnarray}\nonumber
j_1(\alpha,\eta) & = & \sum_{(x,z)\in\Z^2,x\leq 0<x+z}\alpha(x)b(\eta(x),\eta(x+z))\\
 \label{microflux_misanthrope_1}
& - & \sum_{(x,z)\in\Z^2,x+z\leq 0<x}\alpha(x)b(\eta(x),\eta(x+z))\\
\label{microflux_misanthrope_2}
j_2(\alpha,\eta) & = & \alpha(0)\sum_{z\in\Z} zp(z)b(\eta(0),\eta(z))
\end{eqnarray}
Once a microscopic flux function $j$ is defined, the macroscopic flux function 
is obtained by averaging with respect to a suitable family of measures,
that we now introduce.\\ \\
\textbf{Invariant measures.}
We define the 
markovian  {\em joint disorder-particle
process} $(\alpha_t,\eta_t)_{t\ge 0}$ on ${\mathbf A}\times {\mathbf
X}$ with generator given by, for any local function $f$ on ${\mathbf
A}\times{\mathbf X}$, 
\be \label{generator_joint-general} \mathfrak L 
f(\alpha,\eta)=
\sum_{x,y\in{\Z}}\alpha(x)
p(y-x)b(\eta(x),\eta(y)) \left[ f\left(\alpha,\eta^{x,y} \right)-f(\alpha,\eta)
\right] 
\ee
Given
$\alpha_0=\alpha$, this dynamics simply means that $\alpha_t=\alpha$
for all $t\geq 0$, while $(\eta_t)_{t\ge 0}$  is a Markov process with
generator $L_\alpha$ given by \eqref{generator}. Note that $ \mathfrak L $ is
\textit{translation invariant}, that is 
\be\label{eq:L-transl-inv}\tau_x  \mathfrak L = \mathfrak L \tau_x\ee
 where $\tau_x$
acts jointly on $(\alpha,\eta)$. This is equivalent to the commutation relation \eqref{commutation}
for the quenched dynamics. 
\\ \\
Let $\mathcal I_{ \mathfrak L }$,
$\mathcal S$ and ${\mathcal S}^{\mathbf A}$ denote the sets of
probability measures  that are respectively  invariant for $ \mathfrak L $,
shift-invariant
 on $\mathbf{A}\times\mathbf{X}$ and
shift-invariant  on $\mathbf A$.
\begin{proposition}\label{invariant} (\cite[Proposition 3.1]{bgrs4}). 
For every $Q\in{\mathcal S}^{\mathbf A}_e$, there exists a closed
subset $\mathcal R^Q$ of $[0,K]$ containing $0$ and $K$, 
depending on  $p(.)$ and $b(.,.)$,  such that
\begin{eqnarray*}
\left(\mathcal I_{ \mathfrak L }\cap\mathcal S\right)_e & = &
\left\{\nu^{Q,\rho},\,Q\in{\mathcal S}^{\mathbf
A}_e,\,\rho\in{\mathcal R}^Q\right\}
\end{eqnarray*}
where index $e$ denotes the set of extremal elements, and
$(\nu^{Q,\rho}:\,\rho\in\mathcal R^Q)$ is a family of
shift-invariant measures on ${\mathbf A}\times{\mathbf X}$, weakly
continuous with respect to $\rho$, such that
\beq\label{densite-rho}
\int\eta(0)\nu^{Q,\rho}(d\alpha,d\eta)&=&\rho\\
\label{Rezakhanlou}
\lim_{l\to\infty}(2l+1)^{-1}\sum_{x\in\Z:|x|\le l}\eta(x)&=&\rho,
\quad \nu^{Q,\rho}-\mbox{a.s.} \\
\label{ordered_measures}\rho\leq\rho'&\Rightarrow&\nu^{Q,\rho}\ll\nu^{Q,\rho'}\eeq
 Here, $\ll$ denotes the conditional stochastic order defined in Lemma \ref{lemma_conditional} below.
\end{proposition}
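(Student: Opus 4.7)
The plan is to extend the strategy behind Proposition \ref{prop_inv_bgrs2} from the translation-invariant setting of \cite{bgrs2} to the joint disorder-particle process on the compact space $\mathbf A \times \mathbf X$ (we are in the regime $K<+\infty$). By the translation invariance \eqref{eq:L-transl-inv} of $\mathfrak L$ and compactness, $\mathcal I_{\mathfrak L} \cap \mathcal S$ is a non-empty, convex, weakly compact set, whose extremal points are those joint measures for which the combined semigroup-shift action is ergodic. Since $\alpha_t=\alpha$ is frozen by the dynamics, the $\mathbf A$-marginal of any $\nu\in\mathcal I_{\mathfrak L}\cap\mathcal S$ is an arbitrary element of $\mathcal S^{\mathbf A}$, and a standard ergodic decomposition argument forces this marginal to lie in $\mathcal S^{\mathbf A}_e$ whenever $\nu$ is extremal. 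This reduces the problem to fixing $Q\in\mathcal S^{\mathbf A}_e$ and identifying the fiber of extremal invariant measures with $\mathbf A$-marginal $Q$.

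For such a fixed $Q$, I would construct the candidates $\nu^{Q,\rho}$ by a Krylov--Bogolyubov/Cesaro procedure applied to the joint process: starting from $\alpha\sim Q$ together with a deterministic configuration $\eta_0$ of macroscopic density $\rho$, the time averages $T^{-1}\int_0^T \mu_t\,dt$ are tight and every weak limit point lies in $\mathcal I_{\mathfrak L}\cap \mathcal S$ with $\mathbf A$-marginal $Q$ (the latter being preserved since the disorder does not evolve). Complete monotonicity \eqref{attractive_1} transfers the ordering of initial configurations to these limits, which supplies the stochastic order \eqref{ordered_measures} and, on taking $\eta_0\equiv 0$ and $\eta_0\equiv K$, yields $\{0,K\}\subset\mathcal R^Q$; conservation of mass under the attractive coupling delivers \eqref{densite-rho}.

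The crux of the proof, and the main obstacle, is uniqueness: each pair $(Q,\rho)$ must select exactly one extremal invariant measure. Since no explicit invariant form is available, I would run a Liggett--Andjel discrepancy argument on the basic coupling \eqref{def:gen-coupl}, coupling two candidate extremal measures of common type $(Q,\rho)$ while identifying their $\mathbf A$-components. Attractiveness implies that positive and negative discrepancies can only annihilate one another, so each of their densities is non-increasing in time; the joint shift-ergodicity of the stationary coupling (a consequence of \eqref{eq:L-transl-inv}) combined with the equality of mean densities forces both discrepancy densities to vanish, hence $\eta=\xi$ almost surely under the coupling. The delicate point relative to the disorder-free case \cite{bgrs2} is that the quenched dynamics $L_\alpha$ is not shift-invariant, so every ergodicity argument must be carried out at the level of the joint process on $\mathbf A\times\mathbf X$.

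Once uniqueness is established, $\rho\mapsto\nu^{Q,\rho}$ is a monotone injection into a compact set, and standard limit arguments applied to the Cesaro construction make it weakly continuous; therefore its range, and equivalently $\mathcal R^Q$, is closed in $[0,K]$. Property \eqref{Rezakhanlou} follows by applying the pointwise ergodic theorem for the joint shift to the coordinate $\eta(0)$ under the shift-ergodic measure $\nu^{Q,\rho}$, the limit being identified via \eqref{densite-rho}.
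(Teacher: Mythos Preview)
Your overall framework --- work with the joint process on $\mathbf A\times\mathbf X$, fix an ergodic $Q$-marginal, and run a Liggett-type coupling argument --- matches the paper's. But two steps diverge, and the second contains a genuine gap.

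First, the paper does not construct $\nu^{Q,\rho}$ by Cesaro-averaging from initial data of prescribed density $\rho$. It simply \emph{defines} $\mathcal R^Q$ as the set of densities $\int\eta(0)\,d\nu$ realized by $\nu\in(\mathcal I_{\mathfrak L}\cap\mathcal S)_e$ with $\alpha$-marginal $Q$, and then shows that density parametrizes this fiber injectively via ordering. Your Cesaro limits certainly lie in $\mathcal I_{\mathfrak L}\cap\mathcal S$, but you give no argument that they are \emph{extremal}; for $\rho\notin\mathcal R^Q$ they will be nontrivial mixtures of $\nu^{Q,\rho'}$'s with $\rho'\in\mathcal R^Q$, so this route neither identifies $\mathcal R^Q$ nor produces the measures in the statement. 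Second, and more seriously, your uniqueness argument omits the decisive hypothesis. You write that ``equality of mean densities forces both discrepancy densities to vanish'', but $\int\eta(0)\,d\mu'=\int\eta(0)\,d\mu''$ only forces the densities of positive and negative discrepancies to be \emph{equal to one another}, not zero. The paper's route (Lemma \ref{lemma_successive}) is different: one first produces an \emph{extremal} stationary shift-invariant coupling with the prescribed marginals (part (i), itself nontrivial), and then shows (part (iii)) that any such coupling is supported on ordered pairs $\{\eta\le\xi\}\cup\{\xi\le\eta\}$. This is exactly where the irreducibility assumption \eqref{irreducibility_misanthrope} enters --- absent from your sketch --- since it guarantees that discrepancies of opposite sign can be brought together and annihilated, so that their coexistence at positive density is incompatible with stationarity. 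Attractiveness alone is not enough. Only once ordering is established does equal density give equality, and $\rho\le\rho'$ give \eqref{ordered_measures}.
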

 {}From the family of invariant measures in the above  proposition  
for the joint disorder-particle process, one may deduce a family 
of invariant measures for the quenched particle process.
\begin{corollary}
\label{corollary_invariant} (\cite[Corollary 3.1]{bgrs4}). 
There exists  a
subset $\mathbf{\widetilde{A}}^Q$ of ${\mathbf A}$ with
$Q$-probability $1$ (depending on  $p(.)$ and $b(.,.)$, 
such that the family of probability measures
$(\nu^{Q,\rho}_\alpha:\,\alpha\in\mathbf{\widetilde{A}}^Q,\rho\in\mathcal
R^Q)$ on ${\mathbf X}$, defined by
$\nu^{Q,\rho}_\alpha(.):=\nu^{Q,\rho}(.|\alpha)$  
satisfies the following properties, for every $\rho\in\mathcal R^Q$:\\ \\
%
%
\noindent{(B1)} For every  $\alpha\in\mathbf{\widetilde{A}}^Q$, $\nu^{Q,\rho}_\alpha$
is an
invariant measure for $L_\alpha$.\\ \\
{(B2)} For every  $\alpha\in\mathbf{\widetilde{A}}^Q$, $\nu^{Q,\rho}_\alpha$-a.s.,
\[\lim_{l\to\infty}(2l+1)^{-1}\sum_{x\in\Z:\,|x|\leq
l}\eta(x)=\rho\]
{(B3)} The quantity
\be \label{flux} G^Q_\alpha(\rho):=\int j(\alpha,\eta)\nu^{Q,\rho}_\alpha(d\eta)=:G^Q(\rho),\quad
\rho\in{\mathcal R}^Q \ee
does not depend on  $\alpha\in\mathbf{\widetilde{A}}^Q$.
\end{corollary}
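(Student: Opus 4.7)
The plan is to derive each of (B1), (B2), (B3) from the features of $\nu^{Q,\rho}$ recorded in Proposition \ref{invariant} by disintegrating against its $\mathbf{A}$-marginal and then combining invariance, shift intertwining, and $Q$-ergodicity. Since the disorder is frozen by $\mathfrak L$, the $\mathbf{A}$-marginal of $\nu^{Q,\rho}$ is $Q$, so on the Polish space $\mathbf{A}\times\mathbf{X}$ the regular conditional probability $\nu^{Q,\rho}_\alpha(\cdot)=\nu^{Q,\rho}(\cdot\mid\alpha)$ is well defined.

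For (B1), I test $\mathfrak L$-invariance against local product functions $f(\alpha,\eta)=h(\alpha)g(\eta)$. Because $\mathfrak L$ acts only on $\eta$, $\mathfrak L f(\alpha,\eta)=h(\alpha)L_\alpha g(\eta)$, and disintegration of $\int\mathfrak L f\,d\nu^{Q,\rho}=0$ gives
\[\int_{\mathbf A} h(\alpha)\Bigl(\int_{\mathbf X}L_\alpha g\,d\nu^{Q,\rho}_\alpha\Bigr)Q(d\alpha)=0\]
for every local $h$, hence $\int L_\alpha g\,d\nu^{Q,\rho}_\alpha=0$ for $Q$-a.e.\ $\alpha$. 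A countable core of local $g$'s then produces a full-$Q$-measure set $\mathbf{\widetilde A}^Q$ on which $\nu^{Q,\rho}_\alpha$ is $L_\alpha$-invariant. Property (B2) is pure Fubini applied to the full-$\nu^{Q,\rho}$-measure event of \eqref{Rezakhanlou}: its $\alpha$-fibers have full $\nu^{Q,\rho}_\alpha$-measure for $Q$-a.e.\ $\alpha$, which I absorb into $\mathbf{\widetilde A}^Q$ by a further countable intersection.

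The main step is (B3). I first upgrade shift-invariance of $\nu^{Q,\rho}$ to the conditional intertwining $\nu^{Q,\rho}_{\tau_x\alpha}=\tau_x\nu^{Q,\rho}_\alpha$, valid for every $x\in\Z$ and $Q$-a.e.\ $\alpha$: it follows by disintegrating the identity $\tau_x\nu^{Q,\rho}=\nu^{Q,\rho}$ against the $\tau_x$-invariant marginal $Q$ using a countable dense family of test functions. Then I invoke the gradient identity \eqref{j1_bis}: for each $x\in\N$ the function $\sum_{y=1}^x\eta(y)$ is local, so by (B1)
\[\int j_1(\alpha,\eta)\,d\nu^{Q,\rho}_\alpha-\int j_1(\tau_x\alpha,\tau_x\eta)\,d\nu^{Q,\rho}_\alpha=\int L_\alpha\Bigl[\sum_{y=1}^x\eta(y)\Bigr]d\nu^{Q,\rho}_\alpha=0.\]
Combined with the intertwining, the second integral equals $\int j_1(\tau_x\alpha,\eta')\,d\nu^{Q,\rho}_{\tau_x\alpha}(\eta')=G^Q_{\tau_x\alpha}(\rho)$. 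Hence $\alpha\mapsto G^Q_\alpha(\rho)$ is shift-invariant on $\mathbf{\widetilde A}^Q$, and $Q$-ergodicity of the shift forces it to be $Q$-a.s.\ constant, giving (B3); the case $x\le 0$ is symmetric.

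The hard part is (B3): each individual ingredient---$\mathfrak L$-invariance, shift-invariance of $\nu^{Q,\rho}$, the gradient identity for $j_1$, and ergodicity of $Q$---is elementary, but they must all be in force on one and the same environment set $\mathbf{\widetilde A}^Q$. The main bookkeeping is therefore the intersection of the countably many null sets, one per local test function and one per shift $x\in\Z$ used in the intertwining and in the gradient identity; no analytic input beyond Proposition \ref{invariant} is needed.
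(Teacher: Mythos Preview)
The review paper does not include a proof of this corollary (it is only cited from \cite{bgrs4}), so there is no detailed argument here to compare against. Your approach---disintegrating $\nu^{Q,\rho}$ against its $\mathbf{A}$-marginal $Q$, deducing (B1) from $\mathfrak L$-invariance tested against product functions, (B2) from Fubini applied to \eqref{Rezakhanlou}, and (B3) from the conditional shift-intertwining $\nu^{Q,\rho}_{\tau_x\alpha}=\tau_x\nu^{Q,\rho}_\alpha$ combined with the gradient identity \eqref{j1_bis} and $Q$-ergodicity---is correct and is exactly the natural argument. It also matches the paper's remark following \eqref{flux_joint} that ``these invariance properties follow from translation invariance of $\nu^{Q,\rho}$ and translation invariance \eqref{eq:L-transl-inv} of the joint dynamics.''

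One point you should address: the statement asks for a \emph{single} set $\mathbf{\widetilde{A}}^Q$ valid simultaneously for \emph{every} $\rho\in\mathcal R^Q$, whereas your countable-intersection bookkeeping (over local test functions and shifts $x\in\Z$) produces, as written, a full-$Q$-measure set for each fixed $\rho$. Since $\mathcal R^Q$ may be uncountable, an extra step is needed: for (B1) and (B3) one can reduce to a countable dense subset of $\mathcal R^Q$ using the weak continuity of $\rho\mapsto\nu^{Q,\rho}$ stated in Proposition \ref{invariant}, and for (B2) one can use the stochastic ordering \eqref{ordered_measures} to interpolate. This is routine, but your final paragraph's accounting of null sets is incomplete without it.
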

\begin{remark}\label{remark_bgrs2_inv}
As already observed in Remark \ref{remark_bgrs2} above,  
we can view the non-disordered model as a special ``disordered'' model 
by taking $Q$ to be the Dirac measure on the constant environment 
$\alpha(x)\equiv 1$. Hence, Proposition \ref{prop_inv_bgrs2} is a special case
 of Proposition \ref{invariant}. Note that we then have 
 $\nu^\rho=\nu^{Q,\rho}=\nu^{Q,\rho}_\alpha$ for $Q$-a.e. $\alpha\in\bf A$.
\end{remark}
 We now come back to the macroscopic flux function $G^Q(\rho)$. 
We define it as \eqref{flux} for $\rho\in\mathcal R^Q$ and
we extend it by linear interpolation on the complement of $\mathcal R^Q$,
which is a finite or countably infinite union of disjoint open
intervals:  that is, we set
\be\label{inter_1}
G^Q(\rho):=\frac{\rho-\rho^-}{\rho^+-\rho^-}G(\rho^+)+
\frac{\rho^+-\rho}{\rho^+-\rho^-}G(\rho^-),\quad
\rho\not\in{\mathcal R}^Q
\ee
where
\be\label{inter_2}
\rho^-:=\sup[0,\rho]\cap{\mathcal R}^Q,\quad
\rho^+:=\inf[\rho,+\infty)\cap{\mathcal R}^Q
\ee
By definition of $\nu_\alpha^{Q,\rho}$ and statement
\textit{(B3)} of Corollary \ref{corollary_invariant}, we also have
\be\label{flux_joint}
  G^Q(\rho):=\int j(\alpha,\eta)\nu^{Q,\rho}(d\alpha,d\eta) ,\quad
\rho\in{\mathcal R}^Q
\ee
We point out that \eqref{flux_joint} yields the same macroscopic flux function, 
whether $j=j_1$ or $j=j_2$ defined in \eqref{def_f} is plugged into it. 
It does not depend on the choice of a particular decomposition 
\eqref{decomp_gengen} either. These invariance properties follow from 
translation invariance of $\nu^{Q,\rho}$ and translation invariance 
\eqref{eq:L-transl-inv} of the joint dynamics.  Definitions \eqref{def_f} 
of the microscopic flux, and \eqref{flux}--\eqref{flux_joint} of the macroscopic 
flux are model-independent, and can thus be used for other models, such as the 
$k$-exclusion process, or the models reviewed in Section \ref{sec:othermodels}. 
Of course, the invariant measures involved in \eqref{flux}--\eqref{flux_joint} 
depend on the model and disorder.  \\ \\
The function $G^Q$ can be shown to be Lipschitz continuous  (\cite[Remark 3.3]{bgrs4}). 
 This is  the minimum regularity required
for the classical theory of entropy solutions, 
see Section \ref{sec:pde}. We cannot say more about $G^Q$ in general, because
the measures $\nu^{Q,\rho}_\alpha$ are most often not explicit.\\ \\
 Note that in the special case of the non-disordered model 
investigated in \cite{bgrs2} (see  Proposition \ref{prop_inv_bgrs2}, 
Theorem \ref{th_hydro_bgrs2}, Remarks \ref{remark_bgrs2} and \ref{remark_bgrs2_inv} above), 
the microscopic flux functions do not depend on $\alpha$, and \eqref{flux} 
or \eqref{flux_joint} both reduce to
\be\label{flux_homogeneous}
G(\rho):=\int j(\eta)d\nu^\rho(\eta),\quad
\rho\in{\mathcal R}
\ee 
Even in the absence of disorder, only a few models have explicit invariant measures, 
and thus an explicit flux function.  In Section \ref{sec:othermodels}, 
we define a variant of the $k$-step exclusion process, that we call the 
exclusion process with overtaking. 
 It is possible to tune the microscopic parameters of this model so as to obtain any prescribed 
polynomial flux function (constrained to vanish at density values $0$ and $1$ 
due to the exclusion rule).\\
In contrast, a most natural and seemingly simple generalization of the asymmetric 
exclusion process,  the asymmetric $K$-exclusion process,  for which $K\ge 2$
and $b(n,m)={\bf 1}_{\{n>0\}}{\bf 1}_{\{m<K\}}$ in \eqref{generator}, 
does not have explicit 
invariant measures. Thus, nothing more than the Lipschitz property can be said 
about its flux function in general.  In the special case of the totally 
asymmetric $K$-exclusion process,
that is for  
$p(1)=1$, the flux function 
is shown to be concave in \cite{timo}, as a consequence of the 
variational approach used  there  to derive hydrodynamic limit. 
But this approach does not apply to the models we
consider in the present paper. \\ \\
An important open question is 
whether the set $\mathcal R^Q$  (or its analogue $\mathcal R$ in the absence of disorder) 
covers the whole range of possible densities, 
or if it contains gaps corresponding to phase transitions.
The only partial answer to this question so far was given by the 
following result from \cite{bgrs2} for the totally asymmetric $K$-exclusion process
 without disorder. 
\begin{theorem}\label{RforKexclusion}(\cite[Corollary 2.1]{bgrs2}).
For the totally asymmetric $K$-exclusion process  without disorder, 
 $0$ and $K$ are limit points of $\mathcal R$, and $\mathcal R$ contains at least
one point  in $[1/3,K-1/3]$.
\end{theorem}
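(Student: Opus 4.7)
The plan is to combine the ergodic decomposition from Proposition \ref{prop_inv_bgrs2}---namely that every invariant translation-invariant probability on $\mathbf X$ is a mixture of the extremals $\{\nu^{\rho'}:\rho'\in\mathcal R\}$---with the existence of such measures at every prescribed density. Concretely, for each $\rho\in[0,K]$ I would take a translation-invariant initial distribution $\mu_0$ with $\int\eta(0)\, d\mu_0=\rho$ (for instance a product measure with marginal mean $\rho$), form the Cesaro average $\mu_T=T^{-1}\int_0^T\mu_0 S(s)\, ds$, and extract a weak subsequential limit $\nu_\rho$ using compactness of $\mathcal P(\mathbf X)$ (which holds because $K<\infty$). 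The limit is invariant, translation-invariant, and still has density $\rho$ because the dynamics is conservative and translation-invariant, so that the net current at each site vanishes in the mean. The ergodic decomposition then yields a probability measure $\lambda_\rho$ on $\mathcal R$ with $\int\rho'\, d\lambda_\rho(\rho')=\rho$, and in particular $G(\rho)=\int G(\rho')\, d\lambda_\rho(\rho')$ by averaging the microscopic flux.

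For the limit-point statements, I would argue by contradiction. If $\mathcal R\cap(0,\rho^*)=\emptyset$, then for $\rho<\rho^*$ the measure $\lambda_\rho$ is supported on $\{0\}\cup[\rho^*,K]$, so Markov's inequality gives $\lambda_\rho([\rho^*,K])\le\rho/\rho^*$ and hence $G(\rho)\le||G||_\infty\,\rho/\rho^*$. I would contradict this with a microscopic lower bound of the form $G(\rho)\ge c\rho$ for small $\rho$: for the totally asymmetric $K$-exclusion this should follow from the fact that at low density particles behave, before colliding, as independent unit-drift random walks, so that their mean current per bond is of order $\rho$. Combined with the concavity of $G$ on $[0,K]$ known from \cite{timo}, this forces the chord slope $G(\rho^*)/\rho^*$ to approach the one-sided derivative $G'(0^+)$ as $\rho^*\downarrow 0$, which pushes $\rho^*$ to zero. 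Particle-hole symmetry $\eta\mapsto K-\eta$, which conjugates the totally asymmetric $K$-exclusion to its mirror image (with jumps to the left), transfers the conclusion to a limit point at $K$.

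For the point in $[1/3,K-1/3]$, the plan is to apply the same decomposition but with a specific periodic initial distribution of density $K/3$: the uniform random translate of the configuration placing $K$ particles at every site of $3\Z$ and $0$ elsewhere. This measure is translation-invariant, its density $K/3$ lies in $[1/3,K-1/3]$ for every $K\ge 1$, and its instantaneous current through each bond is exactly $1/3$. The resulting $\lambda_{K/3}$ on $\mathcal R$ has mean $K/3$; if $\mathcal R\cap[1/3,K-1/3]$ were empty, then $G$ on this interval would be given by linear interpolation \eqref{inter_1}--\eqref{inter_2} between values at some $\rho^-\le 1/3$ and $\rho^+\ge K-1/3$, and combining the symmetry $G(\rho)=G(K-\rho)$ from particle-hole duality with a lower bound $G(K/3)\ge 1/3$ propagated from the microscopic current via an attractiveness and coupling argument would yield the required contradiction.

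The main obstacle is precisely this microscopic-to-macroscopic passage: turning instantaneous or short-time current estimates into genuine lower bounds on $G=\int j\, d\nu^\rho$, which is an average over the time-evolved extremal measure. This step typically requires attractiveness, coupling to reference processes (such as independent particles at low density, or TASEP-like blocks for the $K/3$ estimate), and careful bookkeeping on the graphical construction of Subsection \ref{subsec:graphical}. Without such quantitative input, the ergodic-decomposition argument alone only yields that the convex hull of $\mathcal R$ equals $[0,K]$, which is strictly weaker than what the theorem claims.
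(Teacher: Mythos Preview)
The paper itself does not prove this statement; it is quoted from \cite{bgrs2}, so there is no in-paper proof to compare against. I can still assess your plan.

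The ergodic-decomposition setup is correct and is indeed the starting point: for any $\rho\in[0,K]$ one manufactures an element of $\mathcal I\cap\mathcal S$ with density $\rho$, then decomposes it over $\mathcal R$. What goes wrong is the way you try to extract quantitative information from the decomposition.

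First, the flux argument for the limit point at $0$ does not close. Your upper bound $\int j\,d\nu_\rho\le\|G\|_\infty\,\rho/\rho^*$ is fine, but to contradict it you would need $\int j\,d\nu_\rho\ge c\rho$ with $c>\|G\|_\infty/\rho^*$. You propose to obtain $c$ from the low-density behaviour of the particle system, but the Cesaro limit $\nu_\rho$ is a \emph{stationary} measure you have no direct handle on: the mean current $\int j\,d(\mu_0 S(t))$ is not conserved in $t$ (only the density is), so knowing that particles initially move with speed close to $1$ tells you nothing about $\int j\,d\nu_\rho$. The subsequent sentence about chord slopes and $G'(0^+)$ is circular, since $\rho^*$ is the fixed edge of the assumed gap and does not tend to $0$.

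The same objection kills the $[1/3,K-1/3]$ argument. The periodic configuration with $K$ particles on $3\Z$ has spatially averaged instantaneous current $1/3$ at time zero only; nothing prevents the Cesaro limit from having a much smaller current, so the inequality you call $G(K/3)\ge 1/3$ is unjustified. You acknowledge this obstacle yourself, but it is not a technicality: it is the entire difficulty.

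The missing idea is the one you cite but do not exploit. The hydrodynamic limit (Theorem \ref{th_hydro_bgrs2}) identifies the \emph{interpolated} $G$ of \eqref{inter_1}--\eqref{inter_2} as the macroscopic flux of the totally asymmetric $K$-exclusion process, and \cite{timo} identifies that same flux independently, via a variational formula, together with structural information: concavity, the one-sided derivatives $G'(0^+)=1$, $G'(K^-)=-1$, and explicit pointwise bounds. A gap in $\mathcal R$ forces $G$ to be affine on that gap; the proof in \cite{bgrs2} shows that such an affine stretch is incompatible with those bounds (for instance, a gap $(0,\rho^*)$ together with concavity and $G'(0^+)=1$ would force $G(\rho)=\rho$ on $[0,\rho^*]$, which one then rules out). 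So the right input is the a priori shape information on $G$ coming from \cite{timo}, not raw estimates on Cesaro-averaged currents. Your particle-hole symmetry observation, on the other hand, is correct and is used.
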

 We end this section with a skeleton of proof for Proposition \ref{invariant}.
 We combine the steps done to prove \cite[Proposition 3.1]{bgrs2}  (without disorder)
and  \cite[Proposition 3.1]{bgrs4}. \\  
\begin{proof}{proposition}{invariant}
The proof has two parts.\\
\textit{Part 1.} It is an extension to the joint particle-disorder process
of a classical scheme in a non-disordered setting due to \cite{lig}, 
which is also the basis for similar results 
in \cite{andjel,coc,guiol,gs}. It relies on couplings.\\
\textit{a)} We first need to couple measures, through the following lemma,
 analogous to Strassen's Theorem (\cite{strassen}). 
\begin{lemma}
\label{lemma_conditional}  (\cite[Lemma 3.1]{bgrs4}). 
For two probability measures $\mu^1$,
$\mu^2$ on ${\mathbf A}\times{\mathbf X}$, the
following properties  (denoted  by $\mu^1\ll\mu^2$) are
equivalent:\\
(i) For every bounded measurable local function $f$ on ${\mathbf
A}\times{\mathbf X}$, such that $f(\alpha,.)$ is nondecreasing for
all $\alpha\in{\mathbf A}$, we have $\int f\,d\mu^1\leq\int
f\,d\mu^2$.\\
(ii) The measures $\mu^1$ and $\mu^2$ have a common
$\alpha$-marginal  $Q$, and
$\mu^1(d\eta|\alpha)\leq\mu^2(d\eta|\alpha)$ for $Q$-a.e.
$\alpha\in{\mathbf A}$.\\
(iii) There exists a coupling measure $\overline{\mu}(d\alpha,d\eta,d\xi)$ supported on
$\{(\alpha,\eta,\xi)\in{\mathbf
A}\times{\mathbf X}^2: \eta\leq\xi \}$ 
under which $(\alpha,\eta)\sim\mu^1$ and $(\alpha,\xi)\sim\mu^2$.
\end{lemma}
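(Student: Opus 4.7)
The plan is to prove the three implications $(iii)\Rightarrow(i)\Rightarrow(ii)\Rightarrow(iii)$, with the last being the conditional analog of Strassen's theorem and carrying the analytic content. For $(iii)\Rightarrow(i)$, given a coupling $\overline{\mu}$ supported on $\{(\alpha,\eta,\xi):\eta\leq\xi\}$, any $f$ with $f(\alpha,\cdot)$ nondecreasing satisfies $f(\alpha,\eta)\leq f(\alpha,\xi)$ $\overline{\mu}$-almost surely; integrating and invoking the marginal identifications $(\alpha,\eta)\sim\mu^1$, $(\alpha,\xi)\sim\mu^2$ immediately yields the required inequality.

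For $(i)\Rightarrow(ii)$, I would first test with functions $f$ depending only on $\alpha$: any such $f$ is trivially nondecreasing in $\eta$, as is $-f$, so $(i)$ forces the integrals of $f$ against $\mu^1$ and $\mu^2$ to coincide, giving a common $\alpha$-marginal $Q$. For the conditional ordering, for each Borel $B\subset\mathbf{A}$ and each bounded nondecreasing local $g$ on $\mathbf{X}$, the product $(\alpha,\eta)\mapsto \mathbf{1}_B(\alpha)g(\eta)$ is admissible in $(i)$; disintegrating against $Q$ and varying $B$ over a generating class yields $\int g\,d\mu^1(\cdot|\alpha)\leq\int g\,d\mu^2(\cdot|\alpha)$ for $Q$-a.e.\ $\alpha$, with an exceptional null set depending on $g$. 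Since $\mathbf{X}=\{0,\ldots,K\}^\Z$ is compact metric, a countable family of such local $g$'s is dense in bounded nondecreasing continuous functions and therefore determines the classical stochastic order on $\mathcal{P}(\mathbf{X})$, collapsing the $g$-dependent null sets into a single $Q$-null exceptional set off which $\mu^1(\cdot|\alpha)\leq\mu^2(\cdot|\alpha)$.

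The delicate implication is $(ii)\Rightarrow(iii)$, which requires constructing a jointly measurable family of pointwise Strassen couplings. For each $\alpha$ in the full-$Q$-measure set where $(ii)$ holds, the classical Strassen theorem on the compact metric space $\mathbf{X}$ produces a nonempty, convex, weakly closed set $\mathcal{K}(\alpha)\subset\mathcal{P}(\mathbf{X}\times\mathbf{X})$ of couplings of $\mu^1(\cdot|\alpha)$ and $\mu^2(\cdot|\alpha)$ supported on $\{\eta\leq\xi\}$. To merge these into a measurable selection $\alpha\mapsto\overline{\mu}_\alpha\in\mathcal{K}(\alpha)$, I would verify that the graph of $\mathcal{K}(\cdot)$ is measurable in $\mathbf{A}\times\mathcal{P}(\mathbf{X}^2)$, using measurability of the regular conditional probabilities $\alpha\mapsto\mu^i(\cdot|\alpha)$ (available because $\mathbf{X}$ is Polish) together with weak closedness of the marginal and support constraints that define $\mathcal{K}(\alpha)$. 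Applying the Kuratowski--Ryll-Nardzewski selection theorem in the Polish space $\mathcal{P}(\mathbf{X}^2)$ then furnishes the desired measurable selection, and $\overline{\mu}(d\alpha,d\eta,d\xi):=Q(d\alpha)\,\overline{\mu}_\alpha(d\eta,d\xi)$ has the correct marginals and the required support by construction.

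The main obstacle is precisely this measurable-selection step: one must set up the regular conditional probabilities carefully and check that $\mathcal{K}(\cdot)$ has measurable graph, so that the Kuratowski--Ryll-Nardzewski hypotheses are verifiable. Everything else is either standard monotone-class and disintegration manipulation or an appeal to the classical Strassen theorem for a single pair of measures.
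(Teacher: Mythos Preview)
Your argument is correct. The cycle $(iii)\Rightarrow(i)\Rightarrow(ii)\Rightarrow(iii)$ is the natural route, and each step is handled properly: the first implication is immediate from the marginal conditions and the support constraint; for the second, testing against functions of $\alpha$ alone (which are local in the paper's sense, since locality refers only to the $\eta$-dependence) gives the common marginal, and your countable-density argument on the compact space $\{0,\ldots,K\}^\Z$ correctly collapses the $g$-dependent null sets; for the third, the measurable-selection argument via Kuratowski--Ryll-Nardzewski is a valid way to glue the fibrewise Strassen couplings into a globally measurable kernel.

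However, the present paper is a review and does not itself supply a proof of this lemma: it is merely stated with a citation to \cite{bgrs4}, so there is no in-text argument to compare yours against. What one can say is that the paper's bibliography includes Kamae--Krengel \cite{kk}, whose measurable version of Strassen's theorem delivers exactly the conditional coupling needed for $(ii)\Rightarrow(iii)$ as a black box. Invoking \cite{kk} directly is the more economical path and is presumably what the original proof does; your explicit selection argument reproves (a special case of) that result by hand. Both approaches are sound and lead to the same conclusion; yours is more self-contained, while a citation to \cite{kk} would be shorter.
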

\textit{b)} Then, for the dynamics,
we denote by $\overline{\mathfrak L}$ the coupled generator for the joint
process $(\alpha_t,\eta_t,\xi_t)_{t\ge 0}$ on ${\mathbf
A}\times{\mathbf X}^2$ defined by
\be\label{joint_coupling}\overline{\mathfrak L}f(\alpha,\eta,\xi)
=(\overline{L}_\alpha f(\alpha,.))(\eta,\xi)\ee
for any local function $f$ on ${\mathbf A}\times{\mathbf X}^2$,
where $\overline{L}_\alpha$ 
 was defined in \eqref{def:gen-coupl}. 
Given $\alpha_0=\alpha$, this means that $\alpha_t=\alpha$ for all
$t\geq 0$, while $(\eta_t,\xi_t)_{t\ge 0}$ is a Markov process with
generator $\overline{L}_\alpha$.  
%
We denote by  $\overline{\mathcal S}$ the set of probability measures on
${\mathbf A}\times{\mathbf X}^2$ that are invariant by space shift
$\tau_x(\alpha,\eta,\xi)=(\tau_x\alpha,\tau_x\eta,\tau_x\xi)$.
We prove successively (next lemma combines \cite[Lemmas 3.2, 3.4 and Proposition 3.2]{bgrs4}):
 \begin{lemma}\label{lemma_successive}
\textit{(i)} Let $\mu',\mu''\in(\mathcal I_{\mathfrak L}\cap\mathcal S)_e$ with a
common $\alpha$-marginal $Q$. Then there exists
$\overline{\nu}\in\left(\mathcal
I_{\overline{\mathfrak L}}\cap\overline{\mathcal S}\right)_e$
such that the respective marginal distributions of $(\alpha,\eta)$
and $(\alpha,\xi)$ under $\overline{\nu}$ are $\mu'$ and
$\mu''$.\\
\textit{(ii)} Let $\overline{\nu}\in\left(\mathcal
I_{\overline{\mathfrak L}}\cap\overline{\mathcal S}\right)_e$. Then
$\overline{\nu}\{(\alpha,\eta,\xi)\in{\mathbf
A}\times{\mathbf X}^2: \eta\leq\xi \}$ and 
$\overline{\nu}\{(\alpha,\eta,\xi)\in{\mathbf A}\times{\mathbf X}^2: \xi\leq\eta\} $ 
belong to $\{0,1\}$. \\ 
\textit{(iii)} Every $\overline{\nu}\in
\left({\mathcal I}_{\overline{\mathfrak L}}\cap\overline{\mathcal S}\right)_e$ 
is supported on 
$\{(\alpha,\eta,\xi)\in{\mathbf A}\times{\mathbf X}^2: \eta\leq\xi \,\mbox{ or }\,\xi\leq\eta\}$.
\end{lemma}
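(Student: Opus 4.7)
I would work with the convex set $\mathcal{C}(\mu',\mu'')$ of probability measures $\overline{\nu}$ on $\mathbf{A}\times\mathbf{X}^2$ that are $\overline{\mathfrak L}$-invariant, invariant under the joint shift $\tau_x(\alpha,\eta,\xi)=(\tau_x\alpha,\tau_x\eta,\tau_x\xi)$, and whose $(\alpha,\eta)$- and $(\alpha,\xi)$-marginals equal $\mu'$ and $\mu''$. First I would show $\mathcal{C}(\mu',\mu'')$ is nonempty: start from any coupling with the correct marginals and common $\alpha$-factor, e.g.\ $\overline{\nu}_0(d\alpha,d\eta,d\xi)=Q(d\alpha)\,\mu'(d\eta\mid\alpha)\,\mu''(d\xi\mid\alpha)$, and apply a joint time-and-space Cesaro averaging. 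Compactness of $\mathcal{P}(\mathbf{A}\times\mathbf{X}^2)$ (using $K<\infty$) together with the Feller property of $\overline{\mathfrak L}$ produces a limit point in $\mathcal{C}(\mu',\mu'')$; the marginals are preserved throughout, since $\mu',\mu''$ were themselves invariant and translation invariant. The set $\mathcal{C}(\mu',\mu'')$ is convex and weakly compact, so Krein--Milman supplies an extreme point $\overline{\nu}$. The key step is then that an extreme point of $\mathcal{C}(\mu',\mu'')$ is also extremal in $\mathcal I_{\overline{\mathfrak L}}\cap\overline{\mathcal S}$: any decomposition $\overline{\nu}=\lambda\overline{\nu}_1+(1-\lambda)\overline{\nu}_2$ inside $\mathcal I_{\overline{\mathfrak L}}\cap\overline{\mathcal S}$ would induce decompositions of $\mu'$ and $\mu''$ inside $\mathcal I_{\mathfrak L}\cap\mathcal S$, which by the extremality hypothesis on $\mu',\mu''$ force each $\overline{\nu}_i$ to have marginals $\mu',\mu''$; hence $\overline{\nu}_i\in\mathcal{C}(\mu',\mu'')$ and the extremity of $\overline{\nu}$ there gives $\overline{\nu}_1=\overline{\nu}_2=\overline{\nu}$.

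\textbf{Plan for part (ii).} The attractiveness property \eqref{attractive_1} of the graphical construction translates into monotonicity of the basic coupling $\overline{L}_\alpha$ built in \eqref{def:gen-coupl}: the event $B=\{(\alpha,\eta,\xi):\eta\leq\xi\}$ is forward invariant under $\overline{\mathfrak L}$. Consequently $\overline{S}(t)\mathbf 1_B\geq \mathbf 1_B$ pointwise. Integrating against $\overline{\nu}$ and using $\overline{\nu}$-invariance yields $\int(\overline{S}(t)\mathbf 1_B-\mathbf 1_B)\,d\overline{\nu}=0$, so $\overline{S}(t)\mathbf 1_B=\mathbf 1_B$ for $\overline{\nu}$-a.e.\ starting configuration. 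This makes $B^c$ essentially forward invariant as well. Since $B$ is manifestly invariant under the joint shift, whenever $\overline{\nu}(B)\in(0,1)$ the two conditional measures $\overline{\nu}(\,\cdot\mid B)$ and $\overline{\nu}(\,\cdot\mid B^c)$ both lie in $\mathcal I_{\overline{\mathfrak L}}\cap\overline{\mathcal S}$, giving a nontrivial convex decomposition of $\overline{\nu}$ and contradicting extremity. Thus $\overline{\nu}(B)\in\{0,1\}$, and the same argument with $(\eta,\xi)$ exchanged handles $\{\xi\leq\eta\}$.

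\textbf{Plan for part (iii), and main obstacle.} The issue is to rule out that both $\overline{\nu}\{\eta\leq\xi\}$ and $\overline{\nu}\{\xi\leq\eta\}$ equal $0$. Following the scheme of \cite{lig,andjel,coc,guiol,gs} adapted to the disordered joint process, I would introduce the discrepancy densities
\[
D^+=\overline{\nu}\{\eta(0)>\xi(0)\},\qquad D^-=\overline{\nu}\{\eta(0)<\xi(0)\},
\]
assume both are strictly positive, and derive a contradiction from the fact that under the basic coupling \eqref{def:gen-coupl}, positive and negative discrepancies are never created, while an $(+)$--$(-)$ pair located at a pair of sites connected by $p(\cdot)$ can be annihilated at a strictly positive rate $\alpha(x)p(y-x)\,[\,\cdots]$. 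Translation invariance of $\overline{\nu}$ and invariance under $\overline{\mathfrak L}$ give a current balance for positive and for negative discrepancies separately, which I would exploit by writing, for a local observable counting discrepancies of a given sign in a box, that its $\overline{\nu}$-expectation of $\overline{\mathfrak L}$ applied to it vanishes. Irreducibility \eqref{irreducibility_misanthrope} ensures that discrepancies of opposite signs eventually see each other, so coexistence $D^+>0,D^->0$ forces a strict net annihilation rate in the stationary state, which is impossible. The hard point, and the reason \cite{bgrs4} dedicates Lemma 3.4 and Proposition 3.2 to this, is that translation invariance of the dynamics fails pointwise because of $\alpha(x)$; one must carry the discrepancy bookkeeping on the \emph{joint} space $\mathbf A\times\mathbf X^2$, using the translation invariance of $\overline{\nu}$ (not of the rates) and the ergodicity of $Q$ to replace spatial averages of $\alpha(x)$ by deterministic constants, while keeping the bound $b\leq\Vert b\Vert_\infty$ under control uniformly in the environment. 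Once $D^+=0$ or $D^-=0$ is obtained, the support statement $\overline{\nu}\{\eta\leq\xi\text{ or }\xi\leq\eta\}=1$ follows at once.
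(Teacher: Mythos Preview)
Your approach to parts (i) and (ii) is correct and coincides with the paper's strategy, which is the classical scheme of \cite{lig} (see also \cite{coc,andjel,guiol,gs}) lifted to the joint disorder--particle process. The Ces\`aro/Krein--Milman argument for (i) and the absorbing-set argument for (ii) are exactly what is done in \cite{bgrs4}.

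For part (iii) you have the right mechanism (no creation of discrepancies under $\overline{L}_\alpha$, strict annihilation rate for neighbouring opposite discrepancies, irreducibility \eqref{irreducibility_misanthrope} to propagate), but two points deserve sharpening. First, the stationary computation $\int\overline{\mathfrak L}\,[(\eta(0)-\xi(0))^+]\,d\overline{\nu}=0$, after the transport terms cancel by translation invariance, yields directly that $\overline{\nu}$-a.s.\ no two sites $x,y$ linked by the kernel carry discrepancies of opposite signs; irreducibility then extends this to all pairs $x,y$, which is precisely statement (iii) as written (support on the \emph{union} $\{\eta\le\xi\}\cup\{\xi\le\eta\}$). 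The global dichotomy ``$D^+=0$ or $D^-=0$'' you aim for is a {\em consequence} of (iii) together with (ii), not what the generator computation produces by itself, so your final sentence has the logic slightly inverted. Second, your remark that ``translation invariance of the dynamics fails pointwise because of $\alpha(x)$'' overstates the difficulty: the joint generator $\overline{\mathfrak L}$ \emph{is} translation invariant (cf.\ \eqref{eq:L-transl-inv}), and it is precisely this, together with translation invariance of $\overline{\nu}$, that makes the cancellation of transport terms go through verbatim on ${\mathbf A}\times{\mathbf X}^2$. No separate averaging over $\alpha$ via ergodicity of $Q$ is needed at this stage.
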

\textit{c)}
This last point \textit{(iii)} is the core of the proof  of Proposition \ref{invariant}: 
Attractiveness assumption 
ensures that an initially
ordered pair of coupled configurations remains ordered at later
times. 
We say that there is a positive (resp. negative) discrepancy 
between two coupled configurations $\xi,\zeta$ at some site $x$ 
if $\xi(x)>\zeta(x)$ (resp. $\xi(x)<\zeta(x)$). 
Irreducibility assumption  \eqref{irreducibility_misanthrope}  
induces  a stronger property: pairs of  discrepancies  of opposite signs
between two coupled configurations eventually get killed, 
so that the two configurations become ordered.\\ \\
\textit{Part 2.}
We define
\[
\mathcal R^Q:=\left\{\int\eta(0)\nu(d\alpha,d\eta):\,
\nu\in\left(\mathcal I_{\mathfrak L}\cap\mathcal S\right)_e,\,\nu\mbox{ has
}\alpha\mbox{-marginal }Q\right\}
\]
Let $\nu^i\in\left(\mathcal I_{\mathfrak L}\cap\mathcal S\right)_e$  with $\alpha$-marginal $Q$ and
$\rho^i:=\int\eta(0)\nu^i(d\alpha,d\eta)\in{\mathcal R}^Q$ for
$i\in\{1,2\}$. Assume $\rho^1\leq\rho^2$.  Using Lemma
\ref{lemma_conditional},\textit{(iii)}, then Lemma
\ref{lemma_successive},  we obtain  $\nu^1\ll\nu^2$,  that is
\eqref{ordered_measures}.
Existence \eqref{Rezakhanlou} of an asymptotic particle density can be
obtained by a proof analogous to \cite[Lemma 14]{mrs}, where the
space-time ergodic theorem
is applied to the joint disorder-particle process. Then,
closedness of ${\mathcal R}^Q$ is established as in \cite[Proposition 3.1]{bgrs2}:
it uses \eqref{ordered_measures}, \eqref{Rezakhanlou}.
Given the rest of the proposition, the weak continuity statement 
comes from a coupling argument, using  \eqref{ordered_measures}  and Lemma
\ref{lemma_conditional}.
\end{proof} 
\subsection{Required properties of the model}\label{subsec:required}
 For the proof of Theorem \ref{th:hydro} and Proposition \ref{invariant}, 
we have not used the particular
form of $L_\alpha$ in \eqref{generator}, but the following  properties.\\ \\
1) The set of environments is a probability space $({\mathbf
A},{\mathcal F}_{\mathbf A},Q)$, where $\mathbf A$ is a  compact
metric space and ${\mathcal F}_{\mathbf A}$ its Borel
$\sigma$-field. On $\mathbf A$ we have a group of space shifts
$(\tau_x:\,x\in\Z)$, with respect to which $Q$ is ergodic.  For each
$\alpha\in{\mathbf A}$, $L_\alpha$ is the generator of a Feller
process on ${\mathbf X}$ 
that satisfies \eqref{commutation}.  The latter 
should be viewed as the assumption on ``how the disorder enters the dynamics''. It 
is equivalent to  $\mathfrak L$  satisfying \eqref{eq:L-transl-inv}, 
that is  being a translation-invariant generator on ${\mathbf A}\times{\mathbf X}$.\\ \\
2) For $L_\alpha$ we can define a graphical construction  
on a space-time Poisson space $(\Omega,\mathcal F,\Prob)$
satisfying the 
 complete monotonicity property
\eqref{attractive_1}. \\ \\
3) Irreducibility  assumption \eqref{irreducibility_misanthrope},
combined with attractiveness  assumption \textit{(M2)} 
 on page \pageref{assumptions_M},  are responsible for 
Lemma \ref{lemma_successive},\textit{(iii)}. \\ \\
Indeed, given the generator \eqref{generator} 
of the process, there is not a unique graphical construction. The strong 
convergence in Theorem \ref{th:hydro} would hold for {\em any} graphical 
construction satisfying \eqref{attractive_1} {\em plus} the existence of a 
sequence of Poissonian events killing any remaining pair of discrepancies
 of opposite signs (see Part 1,c) of the proof of Proposition \ref{invariant} 
for this last point).  
The latter property follows in the case of the misanthrope's process from 
irreducibility assumption \eqref{irreducibility_misanthrope}. 
\\ \\
In Section \ref{sec:othermodels} we shall therefore introduce a general framework
to consider other models satisfying 1) and 2),  
 with appropriate assumptions replacing \eqref{irreducibility_misanthrope} to 
imply Proposition \ref{invariant}.  
 We refer to Lemma \ref{attractive_kstep} for a statement 
and proof of Property \eqref{attractive_1} in the context 
of a general model, including the $k$-step exclusion process. 
The coupled process linked to this property can be  tedious to 
write  in the usual form of explicit coupling rates for more than two
components, or for complex models. We shall see that 
it can be written in a simple model-independent way using 
the framework of Section \ref{subsec:framework}. \\
\section{Scalar conservation laws and entropy solutions}\label{sec:pde}
 In Subsection \ref{subsec:ent}, we recall the definition and characterizations
of entropy solutions to scalar
conservation laws, which will appear as hydrodynamic limits of the
above models. Then in Subsection \ref{subsec:rie}, we explain our variational
formula for the entropy solution in the Riemann case, 
first when the flux function $G$ is Lipschitz continuous, then when 
$G\in {\mathcal C}^2(\R)$  has a single inflexion
point, so that the entropy solution has a more explicit form. Finally, 
in Subsection \ref{subsec:glimm}, we explain the approximation schemes
to go from a Riemann initial profile to a general initial profile. 
\subsection{Definition and properties of entropy solutions}\label{subsec:ent}
 This section is taken from \cite[Section 2.2]{bgrs2} and \cite[Section 4.1]{bgrs3}. 
For more details, we refer to the
textbooks \cite{gr,serre},  or \cite{bres}. 
Equation \eqref{hydrodyn} has no strong solutions in general:
even starting from a smooth Cauchy datum $u(.,0)=u_0$,
discontinuities (called shocks in this context) appear in finite
time. Therefore it is necessary to consider weak solutions, but
then  uniqueness is lost for the Cauchy problem. To recover
uniqueness,
we need to define {\em entropy solutions}.\\ \\
Let $\phi:[0,K]\cap\R\rightarrow\R$ be a convex function. In the context
of hyperbolic systems, such a function is called an
 {\em entropy}. We define the associated
 {\em entropy flux} $\psi$ on $[0,K]$ as
\[
\psi(u):=\int_0^u\phi'(v)G'(v)dv
\]
 $(\phi,\psi)$ is called an
{\em entropy-flux pair}. A Borel function
$u:\R\times\R^{+*}\to[0,K]$ is called an  {\em entropy solution}
to \eqref{hydrodyn}  if and only if it is
entropy-dissipative, {\em i.e.}
\be \label{dissipated_entropy} \dt\phi(u)+\dx\psi(u)\leq 0 \ee
in the sense of distributions on $\R\times\R^{+*}$ for any
entropy-flux pair $(\phi,\psi)$. Note that, by taking $\phi(u)=\pm
u $ and hence $\psi(u)=\pm G(u)$, we see that an entropy solution
is indeed a weak solution to \eqref{hydrodyn}. This
definition can be motivated by the following points:
 {\em i)} when $G$ and $\phi$ are continuously
differentiable, \eqref{hydrodyn} implies equality in strong
sense in \eqref{dissipated_entropy}  (this follows from the chain
rule for differentiation); {\em ii)} this no longer holds in
general if $u$ is only a weak solution to \eqref{hydrodyn};
{\em iii)} the inequality \eqref{dissipated_entropy} can be seen
as a macroscopic version of the second law of thermodynamics that
selects physically relevant solutions. Indeed, one should think of
the {\em concave} function $h=-\phi$ as a thermodynamic entropy,
and spatial integration of \eqref{dissipated_entropy} shows that
the total thermodynamic entropy may not decrease during the
evolution (this is rigorously true for periodic boundary
conditions, in which case the total entropy is well defined). \\ \\
Kru\v{z}kov proved the following fundamental existence and uniqueness 
 result:
\begin{theorem}
\label{kruzkov} (\cite[Theorem 2 and Theorem 5]{k}).
Let $u_0:\R\to[0,K]$ be a Borel  measurable
initial datum. Then there exists a unique (up to a Lebesgue-null
subset of $\R\times\R^{+*}$) entropy solution $u$ to
\eqref{hydrodyn} subject to the initial condition
\be \label{initial} \lim_{t\to 0^+} u(.,t)=u_0(.)\mbox{ in }
L^1_{\rm loc}(\R) \ee
This solution (has a representative in its
$L^\infty(\R\times\R^{+*})$ equivalence class that) is continuous
as a mapping $t\mapsto u(.,t)$ from $\R^{+*}$ to $L^1_{\rm
loc}(\R)$.
\end{theorem}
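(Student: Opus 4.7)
The plan is to follow Kru\v{z}kov's classical strategy: prove uniqueness via the doubling-of-variables technique using the family of entropies $\phi_k(u)=|u-k|$, and prove existence via vanishing viscosity. The $L^1_{\rm loc}$ time-continuity then falls out of the $L^1$ contraction property.

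For uniqueness, I would first observe that the entropies $\phi_k(u)=|u-k|$ (with associated fluxes $\psi_k(u)=\sgn(u-k)[G(u)-G(k)]$) suffice, since any convex $\phi$ can be written as a nonnegative superposition of such ``kink'' entropies plus an affine function. Given two entropy solutions $u$ and $v$ with data $u_0$ and $v_0$, I would apply the entropy inequality for $u(x,t)$ taking the constant $k=v(y,s)$ and, symmetrically, for $v(y,s)$ taking $k=u(x,t)$, tested against a nonnegative $\varphi(x,t,y,s)$ of the form $\rho_\eps(x-y)\rho_\eps(t-s)\psi(x,t)$ with $\rho_\eps$ a standard mollifier and $\psi\in C^\infty_c(\R\times\R^{+*})$. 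Adding the two inequalities and passing to the limit $\eps\to 0$ (the doubling-of-variables step, which uses only the Lipschitz continuity of $G$ and the Lebesgue differentiation theorem) yields the Kato-type inequality
\[
\dt |u-v| + \dx\bigl[\sgn(u-v)(G(u)-G(v))\bigr]\le 0
\]
in $\mathcal D'(\R\times\R^{+*})$. Testing this against a truncation adapted to the light cone of speed $\|G'\|_\infty$ produces the local $L^1$ contraction
\[
\int_{|x|\le R}|u(x,t)-v(x,t)|\,dx\le \int_{|x|\le R+\|G'\|_\infty t}|u_0(x)-v_0(x)|\,dx,
\]
from which uniqueness given \eqref{initial} follows, together with the $L^1_{\rm loc}$ time continuity (by applying the contraction to the pair $(u(\cdot,t+h),u(\cdot,t))$, each being an entropy solution to its own Cauchy problem by time-translation invariance).

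For existence I would regularize: mollify the initial datum as $u_0^\eps := u_0*\rho_\eps$, and solve the uniformly parabolic problem
\[
\dt u^\eps + \dx G(u^\eps) = \eps\,\partial_{xx} u^\eps,\qquad u^\eps(\cdot,0)=u_0^\eps.
\]
Standard parabolic theory gives a smooth solution, and the maximum principle keeps $u^\eps$ in $[0,K]$ since the constants $0$ and $K$ are solutions. Multiplying by $\phi'(u^\eps)$ for any smooth convex $\phi$ yields
\[
\dt\phi(u^\eps) + \dx\psi(u^\eps) = \eps\,\partial_{xx}\phi(u^\eps) - \eps\,\phi''(u^\eps)|\dx u^\eps|^2 \le \eps\,\partial_{xx}\phi(u^\eps),
\]
so any distributional limit point satisfies \eqref{dissipated_entropy}. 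Compactness of $\{u^\eps\}$ in $L^1_{\rm loc}$ can be obtained either by a BV estimate when $u_0$ is BV (using $\dx$ of the equation and the maximum principle on $\dx u^\eps$) combined with Helly's theorem, or more generally by Tartar's compensated-compactness argument via the div-curl lemma applied to the entropy-entropy flux pairs $(\phi(u^\eps),\psi(u^\eps))$. One then upgrades from BV or smooth data to arbitrary bounded measurable $u_0$ by an $L^1_{\rm loc}$ density argument, using the contraction principle just established to pass to the limit.

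The hard part is the doubling-of-variables passage to the limit: the mollifier argument must be set up symmetrically in the pairs $(x,t)$ and $(y,s)$, and the terms involving $G$ have to be rearranged carefully so that the Lipschitz bound $|G(u)-G(v)|\le\|G'\|_\infty|u-v|$ controls all error terms. The existence half, by contrast, is comparatively mechanical once parabolic smoothing is in hand; the only subtle point there is reducing regularity of $u_0$ down to merely bounded measurable, which is handled post factum through the contraction principle.
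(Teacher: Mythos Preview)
The paper does not supply its own proof of this theorem: it is stated purely as a citation of Kru\v{z}kov's original results \cite[Theorems~2 and~5]{k}, and no argument is given in the text. Your sketch is a faithful outline of the classical Kru\v{z}kov strategy --- doubling of variables with the kink entropies $|u-k|$ for uniqueness and the $L^1$ contraction, vanishing viscosity for existence --- and is correct at the level of detail you give.

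The only place where the paper adds commentary is Remark~\ref{rk:kruzkov}, and it is worth comparing against your sketch. Kru\v{z}kov's original proof assumes $G\in C^1$, whereas in this paper $G$ is merely Lipschitz. The paper notes (as you implicitly use) that the uniqueness half goes through verbatim under Lipschitz continuity, since the doubling-of-variables estimate only needs $|G(u)-G(v)|\le\|G'\|_\infty|u-v|$. For existence with Lipschitz $G$, however, the paper points not to vanishing viscosity but to either a flux-approximation argument (approximate $G$ by $C^1$ fluxes and pass to the limit using the contraction principle) or to the self-contained front-tracking construction in \cite[Chapter~6]{bres}. Your vanishing-viscosity route is also valid for Lipschitz $G$, but it is a third alternative rather than the one the paper has in mind.
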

 We recall here that a sequence $(u_n,n\in\N)$
of Borel measurable functions on $\R$ is said to converge to $u$
in $L^1_{\rm loc}(\R)$ if and only if
\[
\lim_{n\to\infty} \int_I\abs{u_n(x)-u(x)}dx=0
\]
for every bounded interval $I\subset\R$.
\begin{remark}\label{rk:kruzkov} Kru\v{z}kov's theorems are stated for a
continuously differentiable $G$. However the proof of the
uniqueness result (\cite[Theorem 2]{k}) uses only
Lipschitz continuity.  In the Lipschitz-continuous case, existence
could be derived from Kru\v{z}kov's result by a flux approximation
argument. However a different, self-contained (and
constructive) proof of existence in this case can be found in  \cite[Chapter 6]{bres}.
\end{remark}
The following proposition is a collection of results on entropy
solutions. We first recall the following definition.
Let ${\rm TV}_I$ denote the variation of a
function defined on some closed interval $I=[a,b]\subset\R$, {\em
i.e.}
\[
{\rm TV}_I[u(.)]= \sup_{x_0=a<x_1\cdots<x_n=b} \sum_{i=0}^{n-1}
\abs{ u(x_{i+1})-u(x_i) }
\]
 Let us say that $u=u(.,.)$ defined on
$\R\times\R^{+*}$ has locally bounded space variation if
\be \label{def_bounded} \sup_{t\in J} {\rm TV}_I[u(.,t)]<+\infty
\ee
for every bounded closed space interval $I\subset\R$ and bounded
time interval $J\subset\R^{+*}$.\\ \\
For two measures $\alpha,\beta\in{\mathcal M}^+(\R)$ with compact
support, we define
\be\label{def_delta}\Delta(\alpha,\beta):=\sup_{x\in\R}\abs{
\alpha((-\infty,x])-\beta((-\infty,x])}\ee
When $\alpha$ or $\beta$  
is of the form $u(.)dx$ for $u(.)\in L^\infty(\R)$
 with compact support, we simply
write $u$ in \eqref{def_delta} instead of $u(.)dx$.
 For a sequence  $(\mu_n)_{n\ge 0}$  of measures with uniformly bounded support, 
the following equivalence holds:
\be\label{equiv_delta}
\mu_n\to \mu\mbox{ vaguely if and only if }\lim_{n\to\infty}\Delta(\mu_n,\mu)=0
\ee  
\begin{proposition}
\label{standard}  (\cite[Proposition 4.1]{bgrs3}) 
\mbox{}\\ \\
i) Let $u(.,.)$ be the entropy solution to \eqref{hydrodyn}
with Cauchy datum  $u_0\in L^\infty(\R)$. Then the mapping
$t\mapsto u_t=u(.,t)$ lies in $C^0([0,+\infty),L^1_{\rm loc}(\R))$.\\
\\
ii) If $u_0$ has constant value $c$, then for all $t>0$, $u_t$ has constant value $c$.\\ \\
iii) If $u_0^i(.)$ has finite variation, that is ${\rm
TV}u_0^i(.)<+\infty$, then so does $u^i(.,t)$ for every $t>0$, and
${\rm TV}u^i(.,t)\leq{\rm TV}u_0^i(.)$.\\ \\
iv) Finite propagation property: Assume $u^i(.,.)$ ($i\in\{1,2\}$)
is the entropy solution to \eqref{hydrodyn} with Cauchy data
$u_0^i(.)$. Let 
\be\label{speed_pde}
V=||G'||_\infty:=\sup_\rho\abs{G'(\rho)}
\ee
Then,
for every $x<y$ and $0\leq t<(y-x)/2V$,
\be \label{lonestab}
\int_{x+Vt}^{y-Vt}\left[u^1(z,t)-u^2(z,t)\right]^\pm dz\leq\int_x^y\left[u_0^1(z)-u_0^2(z)\right]^\pm dz
\ee
In particular, assume $u_0^1=u_0^2$ (resp. $u_0^1\leq u_0^2$) on $[a,b]$ 
for some  $a,b\in\R$ such that $a<b$. Then, for all
$t\leq(b-a)/(2V)$, $u_t^1=u_t^2$ (resp. $u_t^1\leq u_t^2$) on $[a+Vt,b-Vt]$.\\ \\
(v) If $\dsp\int_\R u_0^i(z)dz<+\infty$, then
\be \label{deltastab}
\Delta(u^1(.,t),u^2(.,t))\leq\Delta(u^1_0(.),u^2_0(.)) \ee
\end{proposition}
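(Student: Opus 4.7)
The plan is to reduce all five items to the classical Kru\v{z}kov theory for scalar conservation laws, citing \cite{k,serre,bres} for the underlying technical machinery. Parts (i) and (ii) are immediate consequences of Theorem \ref{kruzkov}: continuity of $t\mapsto u_t$ in $L^1_{\rm loc}(\R)$ on $\R^{+*}$ is part of Kru\v{z}kov's statement, and continuity up to $t=0$ is exactly the initial condition \eqref{initial}. For (ii), the constant map $(x,t)\mapsto c$ is trivially an entropy solution with datum $u_0\equiv c$, so uniqueness in Theorem \ref{kruzkov} forces $u\equiv c$.

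The heart of the argument, and the main obstacle, is part (iv), the Kru\v{z}kov $L^1$-contraction on backward characteristic cones. The standard approach is doubling of variables: apply the entropy inequality \eqref{dissipated_entropy} to each of $u^1(x,t)$ and $u^2(y,s)$ with one-sided Kru\v{z}kov entropies $\phi_k(\cdot)=(\cdot-k)^\pm$ (where $k$ is taken to be the partner value), mollify along the diagonal, and let the mollification parameter tend to zero. One obtains the distributional inequality
\[
\partial_t\,[u^1-u^2]^\pm + \partial_x\Psi^\pm(u^1,u^2)\leq 0,
\]
with $|\Psi^\pm(a,b)|\leq V\,[a-b]^\pm$, where $V=\|G'\|_\infty$. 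Testing this against a suitable smooth cutoff of the truncated cone
$\{(z,s):x+Vs\leq z\leq y-V(t-s),\ 0\leq s\leq t\}$ and passing to the indicator yields \eqref{lonestab}. The two special cases (local equality, respectively local ordering, propagating inward at speed $V$) follow by extending $u_0^1,u_0^2$ arbitrarily outside $[a,b]$ with $u_0^1=u_0^2$ or $u_0^1\leq u_0^2$ on $[a,b]$ and applying \eqref{lonestab} to both signs (respectively only to the $+$ part).

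Part (iii) follows from (iv) by translation invariance of the flux: $(x,t)\mapsto u(x-h,t)$ is the entropy solution with datum $u_0(\cdot-h)$. Summing \eqref{lonestab} over a cover of the real line (equivalently, letting $x\to-\infty$, $y\to+\infty$ in the cone estimate, which is justified by the finite variation of $u_0$) gives the global $L^1$-contraction $\|u(\cdot,t)-u(\cdot-h,t)\|_{L^1}\leq\|u_0-u_0(\cdot-h)\|_{L^1}$, and dividing by $|h|$ then taking a supremum over partitions recovers the BV estimate. Finally for (v), assuming $u_0^i\in L^1(\R)$, set $U^i(x,t):=\int_{-\infty}^x u^i(z,t)\,dz$, so that $\Delta(u^1(\cdot,t),u^2(\cdot,t))=\|U^1(\cdot,t)-U^2(\cdot,t)\|_\infty$. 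A classical observation (see \cite{serre}) is that each $U^i$ is the unique viscosity solution of the Hamilton--Jacobi equation $\partial_t U+G(\partial_x U)=0$ with datum $U_0^i$, and the $L^\infty$-contraction principle for viscosity solutions of such first-order Hamilton--Jacobi equations yields \eqref{deltastab} immediately. Once the doubling-of-variables step in (iv) is granted, all other items are direct corollaries.
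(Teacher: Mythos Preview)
Your proposal is correct and aligns with the paper's treatment. The paper does not give a detailed proof: it simply declares (i)--(iv) ``standard'' and, for (v), invokes the correspondence between entropy solutions of \eqref{hydrodyn} and viscosity solutions of the Hamilton--Jacobi equation \eqref{hj}, deducing \eqref{deltastab} from the \emph{monotonicity} of the Hamilton--Jacobi semigroup. Your argument for (v) is the same correspondence, phrased instead as the $L^\infty$-contraction principle; since the Hamiltonian $G(\partial_x U)$ does not depend on $U$ itself, monotonicity together with invariance under addition of constants is equivalent to $L^\infty$-contraction, so the two formulations coincide. For (i)--(iv) you supply the standard arguments (Kru\v{z}kov uniqueness, doubling of variables on backward cones, and the BV estimate via translation and global $L^1$-contraction) that the paper leaves implicit; these are exactly the classical proofs one finds in \cite{k,serre,bres}, so there is no genuine divergence.
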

Properties \textit{(o)--(iv)} are standard. Property \textit{(v)} can be deduced from the 
correspondence between entropy solutions of \eqref{hydrodyn} 
and viscosity solutions of the Hamilton-Jacobi equation
\be\label{hj}
\partial_t h(x,t)+G[\partial_x h(x,t)]=0
\ee
Namely, $h$ is a viscosity solution of \eqref{hj} if and only if 
$u=\partial_x h$ is an entropy solution of \eqref{hydrodyn}.
Then \textit{(v)} follows from the monotonicity of the solution semigroup 
for \eqref{hj}.  Properties (iv) and (v) have microscopic analogues 
(respectively Lemma \ref{finite_prop} and Proposition \ref{prop:macro-stab}) 
in the class of particle systems we consider, which play an important role 
in the proof of the hydrodynamic limit, as will be sketched in Section \ref{proof_hydro}. \\ \\
 We next recall a possibly more familiar
definition of entropy solutions based on shock admissibility
conditions, but valid only for solutions with bounded variation.
This point of view selects the relevant weak solutions by
specifying what kind of discontinuities are permitted.
 First, in particular,  the following two conditions are
necessary and sufficient for a piecewise smooth function $u(x,t)$
to be a weak solution to equation (\ref{hydrodyn}) 
with initial condition \eqref{eq:rie} (see \cite{ballou}):\\
1. $u(x,t)$ solves equation (\ref{hydrodyn}) at points of
smoothness.\\
2. If $x(t)$ is a curve of discontinuity of the solution then the
{\sl Rankine-Hugoniot condition}
\be\label{rankine-hugoniot}\partial_t x(t) = S[u^+;u^-]:=\frac{G(u^-)-G(u^+)}{u^--u^+}\ee
 holds along $x(t)$.\\
 Moreover, to ensure 
uniqueness, the
following geometric condition, known as \textit{Ole{\u\i}nik's entropy
condition} (see {\em e.g.} \cite{gr} or \cite{serre}),  is sufficient. 
A discontinuity $(u^-,u^+)$, with
$u^\pm:=u(x\pm 0,t)$, is called an entropy shock, if and only if:\\ 
\be \label{oleinik} \ba{l} \mbox{The chord of the graph of $G$ between $u^-$ and $u^+$ lies:}\\
\mbox{below the graph if $u^-<u^+$, above the graph if
$u^+<u^-$.}\ea\ee
In the above condition, ``below" or``above" is meant in wide
sense, {\em i.e.} does not exclude that the graph and chord
coincide at some points between $u^-$ and $u^+$. In particular,
when $G$ is strictly convex (resp. concave), one recovers the fact
that only (and all) decreasing (resp. increasing) jumps are
admitted  (as detailed in subsection \ref{subsec:rie}
below).  Note that, if the graph of $G$ is linear on some
nontrivial interval, condition \eqref{oleinik} implies that any
increasing or decreasing jump within this interval is an
entropy shock.\\ \\
 Indeed,  condition \eqref{oleinik} can be used to select entropy solutions
among weak solutions.  The following result is a consequence of
\cite{vol}.
\begin{proposition}
\label{volpert}  (\cite[Proposition 2.2]{bgrs2}).  
Let $u$ be a weak solution to
\eqref{hydrodyn} with locally bounded space variation. Then
$u$ is an entropy solution to \eqref{hydrodyn} if and only
if, for a.e. $t>0$, all discontinuities of $u(.,t)$ are entropy
shocks.
\end{proposition}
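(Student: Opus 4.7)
{Proposition}{volpert}

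\emph{(Proof plan.)}
The plan is to test the distributional identity $\partial_t \phi(u)+\partial_x \psi(u)\le 0$ (or its reverse direction) against the BV structure of $u$. Since $u$ has locally bounded space variation and is a weak solution to a scalar conservation law (hence has some time regularity inherited from the equation), the Vol'pert chain rule applies: for any entropy-flux pair $(\phi,\psi)$ with $\phi$ convex and $C^1$, the distribution $\partial_t\phi(u)+\partial_x\psi(u)$ decomposes into an absolutely continuous part (on the set of approximate continuity points of $u$) plus a jump part carried by the rectifiable jump set of $u$, on which $u$ admits one-sided traces $u^{\pm}$ and a normal velocity $s$ (the local shock speed). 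Since $u$ is a weak solution, the absolutely continuous part vanishes by the chain rule ($\phi'(u)[\partial_t u+\partial_x G(u)]=0$), and the Rankine--Hugoniot condition \eqref{rankine-hugoniot} holds at every jump. Thus the sign of the full distribution reduces to the sign of the jump coefficient
\[
J(\phi;u^-,u^+):=-s[\phi(u^+)-\phi(u^-)]+[\psi(u^+)-\psi(u^-)],\qquad s=S[u^+;u^-].
\]

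\emph{Reduction to a one-variable inequality.} Using $\psi'=\phi'G'$ and Rankine--Hugoniot, a direct integration by parts with $F(v):=G(v)-G(u^-)-s(v-u^-)$ (so $F(u^\pm)=0$) gives
\[
J(\phi;u^-,u^+)=\int_{u^-}^{u^+}\phi'(v)[G'(v)-s]\,dv=-\int_{u^-}^{u^+}\phi''(v)F(v)\,dv.
\]
Since $\phi''\ge 0$ is the only restriction on $\phi$, varying $\phi$ (and orienting the integral according to the sign of $u^+-u^-$) yields that $J(\phi;u^-,u^+)\le 0$ for every convex $\phi$ if and only if the graph of $G$ lies above its chord between $u^-$ and $u^+$ when $u^-<u^+$, and below when $u^+<u^-$, which is exactly the Ole{\u\i}nik condition \eqref{oleinik}.

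\emph{Both directions.} For the ``only if'' direction, assume $u$ is an entropy solution; specializing to Kru\v{z}kov entropies $\phi_k(v)=|v-k|$ (which can be approximated by smooth convex $\phi$), the entropy inequality forces $J(\phi_k;u^-,u^+)\le 0$ at $\mathcal H^1$-almost every point of the jump set, hence at a.e.\ time $t$, all spatial jumps of $u(.,t)$ satisfy \eqref{oleinik}. For the ``if'' direction, assume the jumps of $u(.,t)$ are entropy shocks for a.e.\ $t$; then $J(\phi;u^-,u^+)\le 0$ on the whole jump set for every convex $\phi$, the absolutely continuous part vanishes as above, and we conclude $\partial_t\phi(u)+\partial_x\psi(u)\le 0$ in $\mathcal D'(\R\times\R^{+*})$.

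\emph{Main obstacle.} The technical heart is justifying the Vol'pert chain rule in the space-time setting: one needs to know that the jump set of $u$ is rectifiable, that traces $u^\pm$ and a normal velocity $s$ exist $\mathcal H^1$-a.e.\ on it, and that any Cantor part of the derivative cancels in the computation of $\partial_t\phi(u)+\partial_x\psi(u)$ once the chain rule is combined with the weak form of \eqref{hydrodyn}. This is precisely the content of Vol'pert's analysis \cite{vol}; once it is invoked, the rest of the argument is the elementary integration-by-parts identity above, which gives the equivalence between the sign of $J$ and the geometric condition \eqref{oleinik}.
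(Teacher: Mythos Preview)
The paper does not give its own proof of this proposition: it is simply stated as a consequence of Vol'pert's work \cite{vol} (and attributed to \cite[Proposition 2.2]{bgrs2}), with no argument supplied in the text. So there is nothing to compare against at the level of proof strategy.

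Your outline is the standard argument underlying that result and is essentially correct. Once the Vol'pert chain rule is invoked to write the measure $\partial_t\phi(u)+\partial_x\psi(u)$ as a diffuse part (which vanishes by the weak form of \eqref{hydrodyn}, via $\phi'(\tilde u)[\partial_t u+\partial_x G(u)]=0$) plus a jump part with density $J(\phi;u^-,u^+)$ along the shock set, the integration-by-parts identity
\[
J(\phi;u^-,u^+)=-\int_{u^-}^{u^+}\phi''(v)\,F(v)\,dv,\qquad F(v)=G(v)-G(u^-)-s(v-u^-),
\]
does indeed reduce the entropy inequality to the chord condition \eqref{oleinik}, and varying over convex $\phi$ (equivalently, over nonnegative $\phi''$) gives both implications. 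You also correctly isolate the genuine technical content: passing from ``locally bounded space variation'' to enough space--time BV structure (rectifiable jump set, traces, normal velocity) so that Vol'pert's chain rule applies.

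One small caveat worth noting: the paper only assumes $G$ Lipschitz, so $G'$ and hence $\psi'=\phi'G'$ are defined only a.e. Your identity $\psi(u^+)-\psi(u^-)=\int_{u^-}^{u^+}\phi'(v)G'(v)\,dv$ and the subsequent integration by parts are valid for $\phi\in C^2$ (so that $\phi''$ is an honest function), after which the Kru\v{z}kov entropies are recovered by approximation, as you indicate; and on the diffuse part of $Du$ one needs the Lipschitz (rather than $C^1$) version of the chain rule. These are exactly the refinements handled in \cite{vol}, so deferring to that reference, as both you and the paper do, is appropriate.
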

 One can show that, if the Cauchy datum $u_0$ has
locally bounded variation, the unique entropy solution given by
Theorem \ref{kruzkov} has locally bounded space variation. Hence
Proposition \ref{volpert} extends into an existence and uniqueness
theorem within functions of locally bounded space variation, where
entropy solutions may be defined as weak solutions satisfying
\eqref{oleinik}, without reference to \eqref{dissipated_entropy}. 
\subsection{The Riemann problem}\label{subsec:rie}
 This subsection is based first on \cite[Section 4.1]{bgrs2}, then on 
\cite[Section 2.1]{bgrs1}. 
Of special importance among entropy solutions are the solutions of
the Riemann problem, {\em i.e.} the Cauchy problem for particular
initial data of the form
\begin{equation}\label{eq:rie}
R_{\lambda,\rho}(x)=\lambda \mathbf 1_{\{x<0\}}+\rho \mathbf 1_{\{x\geq 0\}}
\end{equation}
Indeed: {\em(i)} as developed in the sequel of this subsection, 
these solutions can be computed  explicitly and
have a variational representation; {\em(ii)} as will be seen in 
Subsection \ref{subsec:glimm}, one can construct
approximations to the solution of the general Cauchy problem by
using only Riemann solutions. This has inspired our belief that
one could derive general hydrodynamics from Riemann
hydrodynamics.\\ \\
 In connection with Theorem \ref{th:hydro} and Proposition \ref{invariant}, 
it will be important in the sequel to consider flux functions $G$
with possible linear degeneracy on some density intervals. 
Therefore, in the sequel of this section, $\mathcal R$
will denote a closed subset of $[0,K]\cap\R$ such that $G$ 
is affine on each of the countably many disjoint open intervals 
whose union is the complement of $\mathcal R$. Such a subset exists 
(for instance one can take $\mathcal R=[0,K]\cap\R$) and is not necessarily unique.\\ \\
{}From now on we assume $\lambda<\rho$; adapting to the case
$\lambda>\rho$ is straightforward, by replacing in the sequel
lower with upper convex hulls, and minima/minimizers with
maxima/maximizers. Consider $G_c$, the lower convex envelope of
$G$ on $[\lambda,\rho]$. There exists a nondecreasing function
$H_c$ (hence with left/right limits) such that $G_c$ has
left/right hand derivative  denoted by  $H_c(\alpha\pm 0)$ at every $\alpha$.
The function $H_c$ is defined uniquely outside
the at most countable set of non-differentiability points of $G_c$
\begin{equation}\label{eq:theta}
\Theta = \{\alpha \in [\lambda,\rho]: H_c(\alpha- 0)<H_c(\alpha+
0)\}
\end{equation}
As will appear below, the particular choice of $H_c$ on $\Theta$
does not matter. Let $v_*=  v_*(\lambda,\rho) = H_c(\lambda+0)$
and $v^*= v^*(\lambda,\rho)= H_c(\rho-0)$. Since $H_c$ is
nondecreasing, there is a nondecreasing function $h_c$ on
$[v_*,v^*]$ such that, for every $v\in[v_*,v^*]$,
\be\label{def_h_1}
\ba{l}
\alpha<h_c(v)\Rightarrow H_c(\alpha)\leq v\\
\alpha>h_c(v)\Rightarrow H_c(\alpha)\geq v
\ea
\ee
Any such $h_c$ satisfies
\be\label{def_h_2}
\ba{l}
h_c(v-0)=\inf\{ \alpha\in\R:\,H_c(\alpha)\geq v \}=\sup\{
\alpha\in\R:\,H_c(\alpha)<v \}\\
h_c(v+0)=\inf\{ \alpha\in\R:\,H_c(\alpha)> v \}=\sup\{
\alpha\in\R:\,H_c(\alpha)\leq v \}
\ea
\ee
 We have that,  anywhere in \eqref{def_h_2},
$H_c(\alpha)$ may be replaced with $H_c(\alpha\pm 0)$. The
following properties can be derived 
from \eqref{def_h_1} and \eqref{def_h_2}:\\
\\
1. Given $G$, $h_c$ is defined uniquely, and is continuous,
outside the at most countable set
\begin{eqnarray}
\Sigma_{low}(G)& =& \{ v \in [v_*,v^*] : G_c \mbox{ is
differentiable with derivative }v \nonumber\\&& \mbox{ in a
nonempty open subinterval of } [\lambda,\rho]\}
\end{eqnarray}
By
``defined uniquely'' we mean that for such $v$'s, there is a
unique $h_c(v)$ satisfying \eqref{def_h_1}, which does not depend
on the choice of $H_c$ on $\Theta$.\\
\\
2. Given $G$, $h_c(v\pm 0)$ is uniquely defined, {\em i.e.}
independent of the choice of $H_c$ on $\Theta$, for any
$v\in[v_*,v^*]$. For $v\in\Sigma_{low}(G)$, $(h_c(v-0),h_c(v+0))$
is the maximal
open interval over which $H_c$ has constant value $v$.\\
\\
3. For every $\alpha\in\Theta$ and
$v\in(H_c(\alpha-0),H_c(\alpha+0))$, $h_c(v)$ is uniquely defined
and equal to $\alpha$.\\
\\
In the sequel we extend $h_c$ outside $[v_*,v^*]$ in a natural way
by setting
\be \label{extend_h} h_c(v)=\lambda \mbox{ for }v<v_*,\quad
h_c(v)=\rho\mbox{ for }v>v^* \ee \\
Next proposition  extends 
\cite[Proposition 2.1]{bgrs1}, where we assumed $G \in {\mathcal C}^2(\R)$.
\begin{proposition}\label{proposition_1}  (\cite[Proposition 4.1]{bgrs2}). 
Let $\lambda,\rho\in[0,K]\cap\R$, $\lambda<\rho$.  For $v\in\R$, we set
\be\label{limiting_current_0}
\mathcal G_v(\lambda,\rho)  := \inf\left\{
G(r)-vr:\,r\in[\lambda,\rho]\cap\mathcal R
\right\}\label{limiting_current}\ee
Then 
\phantom{ii}i) For every $v\in\R\setminus\Sigma_{low}(G)$,
 The minimum in \eqref{limiting_current_0} is achieved at a 
unique point $h_c(v)$, and $u(x,t):=h_c(x/t)$ is the weak entropy
solution to (\ref{hydrodyn}) with Riemann initial condition
\eqref{eq:rie},  denoted by $R_{\lambda,\rho}(x,t)$.  \\ \\
\phantom{i}ii) If $\lambda\in\mathcal R$ and $\rho\in\mathcal R$, 
the previous minimum is unchanged if  restricted to
$[\lambda,\rho]\cap\mathcal R$. As a result, the Riemann entropy
solution is a.e. $\mathcal R$-valued.\\ \\
 iii) For every $v,w\in\R$,
\be
\int_v^w R_{\lambda,\rho}(x,t)dx  =  t[\mathcal
G_{v/t}(\lambda,\rho)-\mathcal G_{w/t}(\lambda,\rho)]\label{limiting_current}
\ee 
\end{proposition}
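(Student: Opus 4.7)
The plan is to establish \textit{(i)} by first verifying the variational identification of $h_c$, then showing that $u(x,t)=h_c(x/t)$ is a weak solution satisfying the Ole{\u\i}nik entropy condition so that Proposition \ref{volpert} applies. For the variational step, I would observe that since $G_c$ is the lower convex envelope of $G$ on $[\lambda,\rho]$, we have
\[
\inf_{r\in[\lambda,\rho]}[G(r)-vr]=\inf_{r\in[\lambda,\rho]}[G_c(r)-vr],
\]
because the affine function $r\mapsto vr$ lies below $G$ whenever it lies below $G_c$. Convexity of $G_c$ then implies the minimizers are exactly the points where $v\in[G_c'(r-0),G_c'(r+0)]$, and the defining relations \eqref{def_h_1}--\eqref{def_h_2} identify this minimizer with $h_c(v)$. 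When $v\notin\Sigma_{low}(G)$, $H_c$ does not take the constant value $v$ on any nontrivial subinterval, so $h_c(v)$ is the \emph{unique} minimizer.

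Next I would verify that $u(x,t):=h_c(x/t)$ is the entropy solution. Self-similarity reduces the PDE to an ODE: away from the jump set of $h_c$, the chain rule together with $G'(h_c(v))=v$ (which holds at points of differentiability of $G_c$ where $h_c$ is continuous, hence $G=G_c$ locally) gives $\partial_t u+\partial_x[G(u)]=0$ pointwise. At discontinuities $v_0\in\Sigma_{low}(G)$ the shock speed is $v_0$, and the Rankine--Hugoniot condition \eqref{rankine-hugoniot}
\[
v_0=\frac{G(h_c(v_0+0))-G(h_c(v_0-0))}{h_c(v_0+0)-h_c(v_0-0)}
\]
holds because $G_c$ is affine with slope $v_0$ between $h_c(v_0-0)$ and $h_c(v_0+0)$ and coincides with $G$ at these endpoints (they are contact points between $G$ and its envelope). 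The Ole{\u\i}nik entropy condition \eqref{oleinik} holds precisely because the chord joining $(h_c(v_0-0),G(h_c(v_0-0)))$ and $(h_c(v_0+0),G(h_c(v_0+0)))$ lies on $G_c$, hence below $G$. Since $u(.,t)$ has locally bounded variation (as $h_c$ is monotone), Proposition \ref{volpert} identifies $u$ as the entropy solution. The initial condition is recovered from $h_c(v)=\lambda$ for $v<v_*$ and $h_c(v)=\rho$ for $v>v^*$ by \eqref{extend_h}.

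For \textit{(ii)}, the key remark is that if $G$ is affine on an open interval $(a,b)$ disjoint from $\mathcal R$, then replacing $r\in(a,b)$ by the appropriate endpoint in $\{a,b\}\subset\mathcal R$ does not increase $G(r)-vr$. Hence the infimum in \eqref{limiting_current_0} is unchanged when restricted to $[\lambda,\rho]\cap\mathcal R$. Consequently $h_c(v)\in\mathcal R$ for every $v\notin\Sigma_{low}(G)$, so $u(x,t)=h_c(x/t)$ takes values in $\mathcal R$ almost everywhere.

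Finally \textit{(iii)} follows by an envelope-type computation: the function $v\mapsto\mathcal G_v(\lambda,\rho)$ is concave (infimum of affine functions of $v$) and its derivative, where it exists, equals $-h_c(v)$ by Danskin's theorem using uniqueness of the minimizer outside the countable set $\Sigma_{low}(G)$. Integrating and using the change of variable $s=x/t$ gives
\[
\int_v^w h_c(x/t)\,dx=t\int_{v/t}^{w/t} h_c(s)\,ds=t\bigl[\mathcal G_{v/t}(\lambda,\rho)-\mathcal G_{w/t}(\lambda,\rho)\bigr].
\]
The main obstacle I anticipate is bookkeeping at the exceptional set $\Sigma_{low}(G)$ and the nondifferentiability set $\Theta$: one must check that the proposed formula $h_c(x/t)$ is well-defined a.e., that the entropy/Rankine--Hugoniot verification at jumps is consistent with \emph{both} one-sided limits independently of the choice of $H_c$ on $\Theta$, and that the envelope differentiation argument in \textit{(iii)} is justified despite at most countably many points of nondifferentiability (which have zero Lebesgue measure, so cause no harm after integration).
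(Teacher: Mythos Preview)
Your approach is sound and follows the natural line of argument; note however that this review paper does not actually prove the proposition --- it merely cites \cite[Proposition~4.1]{bgrs2} and then \emph{illustrates} the statement in the special case where $G\in\mathcal C^2$ has a single inflexion point, following \cite{ballou}. So there is no ``paper's own proof'' here to compare against in detail, but your sketch is consistent with the construction the paper sets up (the functions $G_c$, $H_c$, $h_c$, and the appeal to Proposition~\ref{volpert}).

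One imprecision worth flagging: the claim ``hence $G=G_c$ locally'' at points where $h_c$ is continuous is not correct in general --- $h_c(v)$ may be an isolated contact point between $G$ and $G_c$. What you actually need (and what is true) is only that $G(h_c(v))=G_c(h_c(v))$ for every $v\notin\Sigma_{low}(G)$: since both infima coincide and $h_c(v)$ minimizes both $G(r)-vr$ and $G_c(r)-vr$, equality of the values of $G$ and $G_c$ at the minimizer follows. This pointwise equality along the solution is enough to transfer the weak-solution verification from the convex flux $G_c$ (for which $h_c(x/t)$ is the standard rarefaction/shock fan) to $G$. Your Rankine--Hugoniot and Ole\u\i nik checks at jumps, and the Danskin/envelope argument for \textit{(iii)}, are correct as stated; the countable exceptional sets $\Theta$ and $\Sigma_{low}(G)$ indeed cause no trouble after integration, exactly as you anticipate.
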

 In the case when $G\in {\mathcal C}^2(\R)$ is such
that $G''$ vanishes only finitely many times, the 
expression of $u(x,t)$ is more explicit.
Let us detail,  as in \cite[Section 2.1]{bgrs1},   following  
\cite{ballou}, the case where $G$ has 
a single inflexion
point $a\in (0,1)$ and $G(u)$ is strictly convex in $0 \le u<a$
and strictly concave in $a< u\le 1$. 
We denote $H=G'$, $h_1$ the inverse of $H$ restricted to
$(-\infty,a)$, and  $h_2$  the inverse of $H$ restricted to
$(a,+\infty)$. We have
\begin{lemma}(\cite[Lemma 2.2, Lemma 2.4]{ballou}).\label{Bl2.4}
Let $w<a$ be given, and suppose that $w^*<\infty$. Then
\item a) $S[w;w^*]=H(w^*)$;
\item b) $w^*$ is the only zero of $S[u;w]-H(u)$, $u>w$.
\end{lemma}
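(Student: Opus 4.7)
My plan is to reduce both parts of the lemma to a sign analysis of a single auxiliary function. For $u>w$, I would introduce
\[
\phi(u) := G(u) - G(w) - (u-w)H(u) = (u-w)\big(S[u;w] - H(u)\big).
\]
Since $u-w>0$ on $(w,\infty)$, the zeros of $u\mapsto S[u;w]-H(u)$ on $(w,\infty)$ coincide with those of $\phi$, and the identity $S[w^*;w]=H(w^*)$ asserted in (a) is equivalent to $\phi(w^*)=0$. In this way both parts of the lemma become statements about zeros of $\phi$.

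The key computation is the clean derivative
\[
\phi'(u) = G'(u) - H(u) - (u-w)H'(u) = -(u-w)G''(u).
\]
By hypothesis $G''>0$ on $(0,a)$ and $G''<0$ on $(a,1)$, so $\phi'(u)<0$ on $(w,a)$ and $\phi'(u)>0$ on $(a,1)$. Combined with $\phi(w)=0$, this forces $\phi$ to be strictly decreasing on $[w,a]$ (hence strictly negative on $(w,a]$) and then strictly increasing on $[a,1]$. Consequently $\phi$ has at most one zero on $(w,\infty)$, and any such zero must lie in $(a,1]$.

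Next I would connect this monotonicity picture to $w^*$. Under Ballou's construction, $w^*$ is the abscissa of the tangency point of the line through $(w,G(w))$ tangent to the graph of $G$ on the concave branch; the tangency condition reads $G(w^*)-G(w)=(w^*-w)G'(w^*)$, which is exactly $\phi(w^*)=0$. The hypothesis $w^*<\infty$ is therefore the assumption that $\phi$ does return to $0$ somewhere in $(a,1]$. Granting this, (a) follows by reading off $\phi(w^*)=0$ as $S[w^*;w]=H(w^*)$, and (b) follows from the uniqueness of the zero of $\phi$ on $(w,\infty)$ established by the sign analysis.

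I do not anticipate a serious obstacle: the single auxiliary function $\phi$ handles both claims at once. The only delicate point is matching the definition of $w^*$ used in Ballou with the equation $\phi(w^*)=0$; the sign analysis actually shows that the tangent-point characterization and the ``unique zero of $S[\,\cdot\,;w]-H$ beyond the inflection'' characterization are equivalent, so the argument is insensitive to which definition is taken as starting point.
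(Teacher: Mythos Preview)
The paper does not provide its own proof of this lemma; it is simply quoted from Ballou with a citation and then used. So there is nothing in the paper to compare your argument against.

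Your argument itself is correct. The auxiliary function $\phi(u)=G(u)-G(w)-(u-w)H(u)$ does satisfy $\phi(w)=0$ and $\phi'(u)=-(u-w)G''(u)$, so the strict convexity/concavity of $G$ on either side of $a$ forces $\phi$ to decrease strictly on $(w,a)$ and increase strictly on $(a,1)$; hence $\phi$ has at most one zero in $(w,1]$, necessarily in $(a,1]$. Since $(u-w)\bigl(S[u;w]-H(u)\bigr)=\phi(u)$ and $u-w>0$, this yields both (a) and (b) once one knows that $w^*$ is characterized by $\phi(w^*)=0$. Your remark that the tangency definition of $w^*$ in Ballou is precisely this equation is the right way to close the loop; the hypothesis $w^*<\infty$ is exactly the statement that $\phi$ does return to zero on $(a,1]$.

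One cosmetic point: you write ``$\phi$ has at most one zero on $(w,\infty)$'', but the analysis only covers $(w,1]$ since $G$ (hence $H$, $G''$) is defined on $[0,1]$; this is harmless because $w^*\le 1$ anyway.
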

If $\rho <\lambda<\rho^*$ $(\rho <a)$, then 
$H(\lambda)>H(\rho)$: the characteristics starting
from $x\leq 0$ have a speed (given by $H$) greater than the
speed of those starting from $x>0$. If the characteristics
intersect along a curve $x(t)$, then Rankine-Hugoniot condition
will be satisfied if
\[
x'(t)=S[u^+;u^-]=\frac{G(\lambda)-G(\rho)}{\lambda-\rho}
=S[\lambda;\rho].
\]
The convexity of $G$ implies that
Ole{\u\i}nik's Condition is satisfied across $x(t)$.
Therefore the unique entropic weak solution is the \textit{shock}:
\begin{equation}\label{shock}
u(x,t)=\left\{
\begin{array}{ll}
\lambda, & x\leq S[\lambda;\rho]t;\\ \rho, & x > S[\lambda;\rho]t;\\
\end{array}
\right.
\end{equation}
If $\lambda<\rho<a$, the relevant part of the flux function is
convex. The characteristics
starting respectively from $x\leq 0$ and $x>0$ never meet,
and they never enter the space-time wedge between lines
$x=H(\lambda) t$ and $x=H(\rho) t$. It is possible to
define piecewise smooth weak solutions with a jump occurring in
the wedge satisfying the Rankine-Hugoniot condition. But the
convexity of $G$ prevents such solutions from satisfying Ole{\u\i}nik's 
Condition. Thus the unique entropic weak solution is the \textit{continuous
solution with a rarefaction fan}:
\begin{equation}\label{fan}
u(x,t)=\left\{
\begin{array}{ll}
\lambda, & x\leq H(\lambda)t;\\ h_1(x/t), & H(\lambda)t<
x \leq H(\rho)t;\\ \rho, & H(\rho)t< x;\\
\end{array}
\right.
\end{equation}
Let $\rho<\rho^*< \lambda$ $(\rho<a)$: Lemma \ref{Bl2.4} applied
to $\rho$ suggests that a jump from $\rho^*$ to $\rho$ along the
line $x=H(\rho^*)t$ will satisfy the Rankine-Hugoniot condition.
Due to the definition of $\rho^*$, a solution with such a jump
will also satisfy Ole{\u\i}nik's Condition, therefore it would be the unique
entropic weak solution. Notice that since $H(\lambda)<H(\rho^*)$,
no characteristics intersect along the line of discontinuity
$x=H(\rho^*)t$.  This case is called a {\sl contact
discontinuity} in \cite{ballou}. The solution is defined by
\begin{equation}\label{contact}
u(x,t)=\left\{
\begin{array}{ll}
\lambda, & x\leq H(\lambda)t;\\
h_2(x/t), & H(\lambda)t<x\leq H(\rho^*)t;\\
\rho, & H(\rho^*)t< x;\\
\end{array}
\right.
\end{equation}
Corresponding cases on the concave side of $G$ are treated
similarly.\\ \\
Let us illustrate this description on
a reference example, \textit{the totally asymmetric 2-step exclusion} 
(what follows is taken from \cite[Section 4.1.1]{bgrs1}):\\
Its flux function $G_2(u) = u+u^2 - 2 u^3$ is strictly 
convex in $0 \le u<1/6$ and strictly 
concave in $1/6< u\le 1$. For $w< 1/6$,
$w^*=(1-2w)/4$, and for $w> 1/6$, $w_*=(1-2w)/4$;
$h_{1}(x)=(1/6)(1-\sqrt{7-6x})$ for $x\in(-\infty,7/6)$, and
$h_{2}(x)=(1/6)(1+\sqrt{7-6x})$ for $x\in(7/6,+\infty)$.
 We reproduce here \cite[Figure 1]{bgrs1}, 
which  shows the six possible behaviors of the (self-similar)
solution $u(v,1)$, namely a rarefaction fan with either an increasing or a decreasing
initial condition, a decreasing shock, an increasing shock, and
a contact discontinuity with either an increasing or a decreasing initial condition. 
 Cases (a) and (b) present respectively a
rarefaction fan with increasing initial condition and a preserved
decreasing shock. These situations as well as cases (c) and (f)
cannot occur for simple exclusion. Observe also that $\rho\geq
1/2$ implies $\rho_{*}\leq 0$, which leads only to cases (d),(e),
and excludes case (f) (going back to a simple exclusion behavior).
\begin{figure}[htbp]
\hspace*{-0.8cm}%
\vspace*{-0.1cm}%
\includegraphics{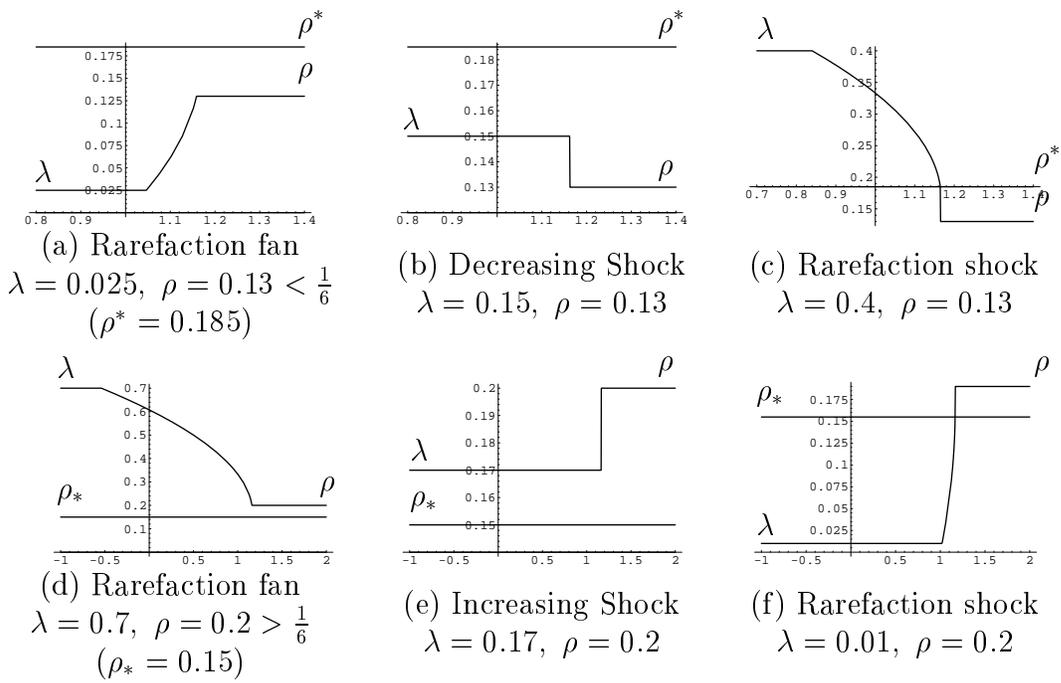}
\caption{Hydrodynamic behavior of the 2-step exclusion with
Riemann initial conditions, graph of
$u(x/t,1)$.}\label{fig:intersections}
\end{figure}  
\subsection{From Riemann to Cauchy problem}
\label{subsec:glimm}
 The beginning of this subsection is based on \cite[Section 2.4]{bgrs2}. 
We will briefly explain here the principle of approximation
schemes based on Riemann solutions, the most important of which is
probably Glimm's scheme, introduced in \cite{glimm}. Consider as
initial datum a piecewise constant profile with finitely many
jumps. The key observation is that, for small enough times, this
can be viewed as a succession of noninteracting Riemann problems.
To formalize this, we recall part of \cite[Lemma 3.4]{bgrs1}, 
which is a consequence of the finite propagation property for \eqref{hydrodyn},
see statement iv) of Proposition \ref{standard}. We denote by
$R_{\lambda,\rho}(x,t)$ the entropy solution to the Riemann
problem with initial datum \eqref{eq:rie}.
\begin{lemma}
\label{nointeraction}  (\cite[Lemma 2.1]{bgrs2}).  Let
$x_0=-\infty<x_1<\cdots<x_n<x_{n+1}=+\infty$, and
$\varepsilon:=\min_k(x_{k+1}-x_k)$. Consider the Cauchy datum
\[
u_0:=\sum_{k=0}^n r_k\mathbf 1_{(x_k,x_{k+1})}
\]
where $r_k\in[0,K]$. Then for $t<\varepsilon/(2V)$, with $V$ given
by \eqref{speed_pde}, the entropy solution
$u(.,t)$ at time $t$ coincides with $R_{r_{k-1},r_k}(.-x_k,t)$ on
$(x_{k-1}+Vt,x_{k+1}-Vt)$. In particular, $u(.,t)$ has constant
value $r_k$ on $(x_k+Vt,x_{k+1}-Vt)$.
\end{lemma}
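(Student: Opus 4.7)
The plan is to derive Lemma \ref{nointeraction} as a direct consequence of the finite propagation property stated in Proposition \ref{standard}(iv), applied twice in slightly different guises. The underlying idea is that two Cauchy data that coincide on an interval $[a,b]$ produce entropy solutions that coincide on the shrunk interval $[a+Vt,b-Vt]$, so as long as we can find, on each piece of the partition, a convenient ``comparison datum'' that agrees with $u_0$ locally, the corresponding entropy solutions must agree on the core of that piece.

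First I would prove the local identification with the Riemann solution. Fix $k\in\{1,\ldots,n\}$ and let $\widetilde u_0(x):=R_{r_{k-1},r_k}(x-x_k)$. By construction of the partition, $u_0(x)=r_{k-1}$ on $(x_{k-1},x_k)$ and $u_0(x)=r_k$ on $(x_k,x_{k+1})$, so $u_0=\widetilde u_0$ almost everywhere on $[x_{k-1},x_{k+1}]$. Setting $a=x_{k-1}$ and $b=x_{k+1}$, Proposition \ref{standard}(iv) applied to $u_0^1=u_0$ and $u_0^2=\widetilde u_0$ yields
\[
u(\cdot,t)=R_{r_{k-1},r_k}(\cdot-x_k,t)\quad\text{on }(x_{k-1}+Vt,x_{k+1}-Vt)
\]
for $t\leq (x_{k+1}-x_{k-1})/(2V)$. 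Since $x_{k+1}-x_{k-1}\geq 2\varepsilon$, the constraint $t<\varepsilon/(2V)$ is amply sufficient, and this gives the first assertion of the lemma.

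Next I would establish constancy on the core $(x_k+Vt,x_{k+1}-Vt)$. Here I compare $u_0$ with the constant datum $\bar u_0\equiv r_k$. The two data agree on $[x_k,x_{k+1}]$, so Proposition \ref{standard}(iv) with $a=x_k$, $b=x_{k+1}$ gives $u(\cdot,t)=\bar u(\cdot,t)$ on $(x_k+Vt,x_{k+1}-Vt)$ for $t\leq(x_{k+1}-x_k)/(2V)$, hence for $t<\varepsilon/(2V)$. By Proposition \ref{standard}(ii), the entropy solution with constant datum $r_k$ is identically $r_k$, which yields $u(\cdot,t)\equiv r_k$ on the core, as required.

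There is in fact no significant obstacle: the lemma is essentially a geometric rereading of finite propagation. The only point to watch is the bookkeeping with $\varepsilon$ — one must make sure that the smaller threshold $\varepsilon/(2V)$ in the statement does not conflict with the width $(b-a)/(2V)$ produced by finite propagation. Since $b-a$ is at least $\varepsilon$ (for the core estimate) and at least $2\varepsilon$ (for the Riemann comparison), both applications go through comfortably. Once both steps are combined, the lemma is proved.
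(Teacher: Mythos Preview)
Your proof is correct and matches the paper's approach: the paper does not spell out a proof but simply states that the lemma ``is a consequence of the finite propagation property for \eqref{hydrodyn}, see statement iv) of Proposition \ref{standard},'' which is precisely what you carry out in detail. Your two comparisons (with the shifted Riemann datum on $[x_{k-1},x_{k+1}]$ and with the constant datum on $[x_k,x_{k+1}]$) are the natural way to unpack this, and your bookkeeping with $\varepsilon$ is fine; the unbounded endpoint cases $x_0=-\infty$, $x_{n+1}=+\infty$ are handled by letting the finite endpoint in Proposition \ref{standard}(iv) tend to $\pm\infty$.
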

Given some Cauchy datum $u_0$, we construct an approximate
solution $\tilde{u}(.,.)$ for the corresponding entropy solution
$u(.,.)$. To this end we define an approximation scheme based on a
time discretization step $\Delta t>0$ and a space discretization
step $\Delta x>0$. In the limit we let $\Delta x\to 0$ with the
ratio $R:=\Delta t/\Delta x$ kept constant, under the condition
\be \label{cfl} R\leq 1/(2V) \ee
known as the {\em Courant-Friedrichs-Lewy (CFL) condition}. Let
$t_k:=k\Delta t$ denote discretization times.
We start with $k=0$, setting $\tilde{u}_0^-:=u_0$.\\ \\
{\em Step one} (approximation step): Approximate $\tilde{u}_k^-$
with a piecewise constant profile $\tilde{u}_k^+$ whose step lengths  
are bounded below by $\Delta x$.\\ \\
{\em Step two} (evolution step): For $t\in[t_k,t_{k+1})$, denote
by $\tilde{u}_k(.,t)$ the entropy solution at time $t$ with
initial datum $\tilde{u}_k^+$ at time $t_k$. By \eqref{cfl} and
Lemma \ref{nointeraction}, $\tilde{u}_{k}(.,t)$ can be computed
solving only Riemann problems. Set
$\tilde{u}_{k+1}^-=\tilde{u}_k(.,t_{k+1})$. \\ \\
{\em Step three} (iteration): increment $k$ and go back to step
one.\\ \\
The approximate entropy solution is then defined by
\be \label{approximate_solution} \tilde{u}(.,t):=\sum_{k\in\N}
\tilde{u}_k(.,t){\bf 1}_{[t_k,t_{k+1})}(t)
 \ee
The efficiency of the scheme depends on how the approximation step
is performed. In Glimm's scheme, the approximation $\tilde{u}_k^+$
is defined as
\be \label{sampling} \tilde{u}_k^+:=\sum_{j\in
k/2+\Z}\tilde{u}_k^-((j+a_k/2)\Delta x){\bf 1}_{((j-1/2)\Delta
x,(j+1/2)\Delta x)} \ee
where $a_k\in(-1,1)$.
Then we have the following convergence result.
\begin{theorem}
\label{glimm}  (\cite[Theorem 2.3]{bgrs2}). 
Let $u_0$ be a given measurable initial datum. Then
every sequence $\varepsilon_n\downarrow 0$ as $n\to\infty$ has a
subsequence $\delta_n\downarrow 0$ such that, for a.e. sequence
$(a_k)$ w.r.t. product uniform measure on $(-1,1)^{\Z^+}$, the
Glimm approximation defined by \eqref{approximate_solution} and
\eqref{sampling} converges to $u$ in $L^1_{\rm
loc}(\R\times\R^{+*})$ as $\Delta x=\delta_n\downarrow 0$.
\end{theorem}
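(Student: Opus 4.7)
The plan is to combine the classical random-choice analysis of Glimm with Kru\v{z}kov's uniqueness theorem (Theorem \ref{kruzkov}) and the $L^1$-contraction property of the entropy semigroup (statement iv of Proposition \ref{standard}). First I would reduce to the case of a Cauchy datum $u_0$ of locally bounded variation. Given general $u_0\in L^{\infty,K}(\R)$ and $\varepsilon>0$, one picks a BV approximation $u_0^\varepsilon$ with $\norm{u_0-u_0^\varepsilon}_{L^1_{loc}}<\varepsilon$. The $L^1$-stability inherited from Proposition \ref{standard} controls the distance between the entropy solutions $u$ and $u^\varepsilon$, while the analogous monotonicity/$L^1$-stability for the Riemann solver in Step two propagates $L^1$-closeness through the Glimm iteration uniformly in $\Delta x$ and in $(a_k)$, so it is enough to treat BV data.

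For BV data I first establish uniform bounds: each Riemann solver satisfies $\mathrm{TV}(R_{\lambda,\rho}(\cdot,t))\le|\rho-\lambda|$, hence the evolution step of Step two does not increase total variation; the sampling step \eqref{sampling} is cellwise pointwise evaluation, which also preserves monotonicity between successive cells and so does not increase total variation. Therefore $\mathrm{TV}(\tilde u(\cdot,t))\le\mathrm{TV}(u_0)$ uniformly in $\Delta x$, $t$, and $(a_k)$. Combined with $0\le\tilde u\le K$ and the time-regularity furnished by finite propagation (speed $V$), Helly's selection theorem yields a subsequence $\delta_n\downarrow 0$ of $\varepsilon_n$ such that $\tilde u^{\delta_n}\to\bar u$ in $L^1_{loc}(\R\times\R^{+*})$; a Fubini/diagonal argument on a countable dense collection of test functions makes the subsequence independent of the random sampling $(a_k)$, so that convergence holds for a.e.\ $(a_k)$.

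The central step is to identify $\bar u$ with the entropy solution $u$. For any entropy-flux pair $(\phi,\psi)$ and any nonnegative $\varphi\in C^\infty_c(\R\times\R^{+*})$, inside each slab $[t_k,t_{k+1})$ the function $\tilde u_k$ solves \eqref{hydrodyn} exactly and thus satisfies \eqref{dissipated_entropy}. Integrating against $\varphi$ and summing in $k$, one obtains
\be
\int_0^\infty\!\!\int_\R \bigl[\phi(\tilde u^{\Delta x})\dt\varphi+\psi(\tilde u^{\Delta x})\dx\varphi\bigr]\,dx\,dt \ge -E^{\Delta x}(\varphi,(a_k)),
\ee
where $E^{\Delta x}$ is a sum over cells and times $t_k$ of local residuals comparing $\tilde u_k^-(\cdot,t_k)$ with $\tilde u_k^+$ defined by \eqref{sampling}. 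The Glimm interaction estimate (see \cite{glimm}, or \cite[Chapter 6]{bres}) uses the independence and uniform distribution of the $a_k$ on $(-1,1)$ to show $\Exp[E^{\Delta x}]=O(\Delta x)$ and $\mathrm{Var}[E^{\Delta x}]=O(\Delta x)$, with constants depending on $\norm{\varphi}_\infty$, $\mathrm{supp}\,\varphi$, $V$, and $\mathrm{TV}(u_0)$. Extracting if necessary a further subsequence of $\delta_n$ so that the variances are summable, Chebyshev and Borel--Cantelli give $E^{\delta_n}(\varphi,(a_k))\to 0$ $\Prob$-a.s., simultaneously for all $(\phi,\psi,\varphi)$ in a fixed countable dense family. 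Passing to the limit yields \eqref{dissipated_entropy} for $\bar u$. The initial condition $\bar u(\cdot,0)=u_0$ is inherited from $\tilde u^{\Delta x}(\cdot,0)\to u_0$ in $L^1_{loc}$. By Theorem \ref{kruzkov}, $\bar u = u$; uniqueness of the limit then upgrades subsequential to full convergence along $\delta_n$.

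The main obstacle is the stochastic residual estimate: one must show that the errors introduced at each sampling time \eqref{sampling} cancel on average across cells, which is exactly what the random choice of $a_k$ is designed to produce. Once the first- and second-moment bounds on $E^{\Delta x}$ are available, the remaining pieces (BV compactness, Kru\v{z}kov uniqueness, reduction from general $L^\infty$ to BV data) are comparatively standard.
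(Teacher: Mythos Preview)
Your overall strategy matches the paper's: for BV data, use uniform TV bounds and Helly compactness, show subsequential limits satisfy the entropy inequality \eqref{dissipated_entropy} via a stochastic estimate on the sampling residual, then invoke Kru\v{z}kov uniqueness. The paper does not prove this in the text; it cites \cite{serre} for the BV case and \cite[Appendix B]{bgrs2} for the extension to merely measurable $u_0$, and explicitly notes that the argument is \emph{indirect}, via \eqref{dissipated_entropy}, exactly as you propose. So the BV core of your argument is correct and on the same track.

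There is, however, a genuine gap in your reduction from general measurable data to BV data. You claim that $L^1$-closeness ``propagates through the Glimm iteration uniformly in $\Delta x$ and in $(a_k)$''. This is false: Step two (evolution by Riemann solvers) is $L^1$-contractive, but the sampling step \eqref{sampling} is \emph{not} $L^1$-contractive for fixed $a_k$. For instance, if two profiles differ only on a set of measure $\varepsilon\ll\Delta x$ inside one cell, the sampling point can land in that set and produce a discrepancy of size $\Delta x$ on the whole cell. What \emph{is} true is that sampling preserves the $L^1$ distance in expectation over $a_k$ (it is the cell average), so the $L^1$ distance between two coupled Glimm approximations is a nonnegative supermartingale in $k$. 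This gives control only in expectation, and upgrading to a.e.\ $(a_k)$ along a subsequence $\delta_n$ requires the same kind of moment/Borel--Cantelli argument you already use for $E^{\Delta x}$; this is precisely why the paper relegates the extension to a separate appendix rather than calling it routine. A second, smaller point: your compactness step does not need a ``Fubini/diagonal'' argument to make the subsequence independent of $(a_k)$. The TV bound is uniform in $(a_k)$, so once $E^{\delta_n}\to 0$ a.s.\ (for your fixed subsequence $\delta_n$), Helly plus uniqueness of the entropy limit gives convergence of the \emph{full} sequence $\tilde u^{\delta_n}$ for a.e.\ $(a_k)$ separately.
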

When $u_0$ has locally bounded variation, the above result is a
specialization to scalar conservation laws of a more general
result for systems of conservation laws: see Theorems 5.2.1,
5.2.2, 5.4.1 and comments following Theorem 5.2.2 in \cite{serre}.
In \cite[Appendix B]{bgrs2}, we prove that it is enough to assume $u_0$ measurable.\\ \\
Due to the nature of the approximation step \eqref{sampling}, the
proof of Theorem \ref{glimm} does not proceed by direct estimation
of the error between $\tilde{u}_k^\pm$ and $u(.,t_k)$, but
indirectly, by showing that limits of the scheme satisfy
\eqref{dissipated_entropy}.\\ \\
We will now present a different Riemann-based approximation procedure,
 introduced first in \cite[Lemma 3.6]{bgrs1}, and refined in \cite{bgrs2,bgrs3}. 
 This approximation allows direct control of the error by
using the distance $\Delta$ defined in \eqref{def_delta}. Intuitively, errors accumulate during
approximation steps, but might be amplified by the resolution
steps. The key properties of our approximation are that
the total error accumulated during the approximation step is
negligible as $\varepsilon\to 0$, and the error is not
amplified by the resolution step, because $\Delta$ does not increase along 
entropy solutions, see Proposition \ref{standard}\textit{(v)}. 
\begin{theorem}(\cite[Theorem 3.1]{bgrs1}).
\label{th_glimm}
Assume  $(T_t)_{t\geq 0}$ is a semigroup on the set of bounded 
$\mathcal R$-valued functions, with the following properties:\\ \\
(1) For any Riemann initial condition $u_0$, $t\mapsto u_t=T_t u_0$ is 
the entropy solution to \eqref{hydrodyn}
with Cauchy datum $u_0$.\\ \\
(2) (Finite speed of propagation). There is a constant v such that, for any $a,b\in\R$, any two
initial conditions $u_0$ and $u_1$ coinciding on $[x; y]$, and any $t<(b-a)/(2v)$,
$u_t=T_t u_0$ and $v_t=T_t v_0$ coincide on [x + vt; y − vt].\\ \\
(3) (Time continuity). For every bounded initial condition $u_0$ with bounded support and every $t\geq 0$,
$lim_{\varepsilon\to 0+}\Delta(T_t u_0,T_{t+\varepsilon}u_0)=0$.\\ \\
(4) (Stability). For any bounded initial conditions $u_0$ and $v_0$, with bounded support, 
$\Delta(T_t u_0,T_t v_0)\leq\Delta(u_0,v_0)$.\\ \\
Then, for any bounded $u_0$, $t\mapsto T_t u_0$ is the entropy solution to \eqref{hydrodyn} with Cauchy
datum $u_0$.
\end{theorem}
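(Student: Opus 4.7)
Proof proposal:

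The plan is to show that any semigroup $(T_t)_{t\ge 0}$ satisfying (1)--(4) coincides with the entropy-solution semigroup $(S_t)_{t\ge 0}$ from Theorem \ref{kruzkov}; observe that by Proposition \ref{standard}(i), (iv), (v), $S_t$ itself fulfils hypotheses (1)--(4). So the content of Theorem \ref{th_glimm} is really a uniqueness statement. The argument proceeds in three stages paralleling the Riemann-based approximation discussed after Lemma \ref{nointeraction}: local agreement on piecewise constant data (no Riemann interaction), iteration in time, and extension by $\Delta$-continuity to arbitrary bounded data.

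First I would treat a piecewise constant initial datum $u_0=\sum_k r_k\mathbf 1_{(x_k,x_{k+1})}$ with $r_k\in\mathcal R$ and $\varepsilon:=\min_k(x_{k+1}-x_k)>0$. For each jump point $x_k$, the single-jump datum $u^{(k)}=r_{k-1}\mathbf 1_{\{x<x_k\}}+r_k\mathbf 1_{\{x\ge x_k\}}$ is a Riemann initial condition, so by (1), $T_t u^{(k)}=R_{r_{k-1},r_k}(\cdot-x_k,t)$, the entropy solution. Since $u^{(k)}$ agrees with $u_0$ on $(x_{k-1},x_{k+1})$, the finite-propagation property (2) yields $T_t u_0 =R_{r_{k-1},r_k}(\cdot-x_k,t)$ on the wedge $(x_{k-1}+vt,x_{k+1}-vt)$ for $t<(x_{k+1}-x_{k-1})/(2v)$. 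Applying (1) to the trivial Riemann datum $\lambda=\rho=r_k$ and then (2) shows similarly that $T_t u_0=r_k$ between consecutive wedges. For $0\le t<\varepsilon/(2v)$ these wedges cover $\R$, and $T_t u_0$ is given by exactly the formula that Lemma \ref{nointeraction} provides for $S_t u_0$. Hence $T_t u_0=S_t u_0$ on this short time range.

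To iterate beyond time $\varepsilon/(2v)$, fix $\Delta t<\varepsilon/(2v)$ and set up a time-discretized scheme: having defined $v_k^T:=T_{t_k}u_0$ with $t_k=k\Delta t$, approximate it by a piecewise constant $\tilde v_k$ with step lengths $\ge\varepsilon$, and set $\eta_k:=\Delta(v_k^T,\tilde v_k)$. Combining the non-expansivity (4) of $T$, its analogue Proposition \ref{standard}(v) for $S$, and the equality $T_{\Delta t}\tilde v_k=S_{\Delta t}\tilde v_k$ supplied by the previous step, the triangle inequality gives
\[
\Delta\bigl(T_{t_{k+1}}u_0,S_{t_{k+1}}u_0\bigr)\le 2\eta_k+\Delta\bigl(T_{t_k}u_0,S_{t_k}u_0\bigr),
\]
and therefore $\Delta(T_{n\Delta t}u_0,S_{n\Delta t}u_0)\le 2\sum_{k<n}\eta_k$. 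Property (3) of $T$, combined with the $L^1_{\rm loc}$-continuity of $S_t u_0$ from Proposition \ref{standard}(i), bridges between discrete times. Finally, any bounded $\mathcal R$-valued $u_0$ is approximated in $\Delta$-distance by compactly supported piecewise constant data (truncation outside a large spatial window is harmless on bounded regions by (2) and Proposition \ref{standard}(iv)); another application of (4) and Proposition \ref{standard}(v) then passes to the limit and yields $T_t u_0=S_t u_0$.

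The main obstacle is precisely the control of $\sum_k\eta_k$ as the discretization is refined, since the number of time steps is of order $1/\Delta t\sim 1/\varepsilon$, so each $\eta_k$ must be $o(\varepsilon)$ in the $\Delta$-distance. Naive piecewise constant averaging of a general BV function only achieves $\eta_k=O(\varepsilon)$, which is not enough. The way out is to exploit the Riemann-based structure of $v_k^T$, which at each step is piecewise Lipschitz (rarefaction fans) away from a locally finite set of shocks, and to place the discontinuities of $\tilde v_k$ at the shock locations of $v_k^T$ so that the $\Delta$-error picks up only a second-order contribution from the smooth pieces. This is the refined approximation operator of \cite[Lemma 3.6]{bgrs1}, strengthened in \cite{bgrs2,bgrs3}, and constitutes the technically delicate part of the proof.
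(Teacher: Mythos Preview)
Your overall architecture matches the paper's: short-time identification on piecewise constant data via Lemma \ref{nointeraction}, a time-stepping scheme with $\Delta$-stability controlling the accumulated error, and the recognition that the crux is getting each approximation error $\eta_k$ to be $o(\eps)$ so that the sum over $O(1/\eps)$ steps vanishes. The paper points to exactly this and isolates the needed ingredient as Lemma \ref{cor_approx}.

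There is, however, a genuine gap in your iteration. You set $v_k^T:=T_{t_k}u_0$ and approximate \emph{that} by a step function $\tilde v_k$, then justify $\eta_k=o(\eps)$ by invoking ``the Riemann-based structure of $v_k^T$, which at each step is piecewise Lipschitz away from a locally finite set of shocks''. But this is circular: you have no a priori information on $T_{t_k}u_0$ beyond hypotheses (1)--(4), and the structure you describe is a property of \emph{entropy solutions}, i.e.\ of $S_{t_k}u_0$, which is precisely what you are trying to identify $T_{t_k}u_0$ with. The refined approximation lemma you cite (Lemma \ref{cor_approx}, \cite[Lemma 3.6]{bgrs1}) is stated for $u(.,t_k)=S_{t_k}u_0$, not for $T_{t_k}u_0$.

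The fix is simple and leaves your recursion intact: approximate $u(.,t_k)$ instead. With $\eta_k:=\Delta(u(.,t_k),\tilde v_k)$, Lemma \ref{cor_approx} gives $\eta_k\le\eps\,\delta_\eps(t_k)$ with $\delta_\eps\to 0$ uniformly on $[0,T]$ along a subsequence. Then (4) gives $\Delta(T_{t_{k+1}}u_0,T_{\Delta t}\tilde v_k)\le\Delta(T_{t_k}u_0,\tilde v_k)\le\Delta(T_{t_k}u_0,u(.,t_k))+\eta_k$, the short-time step gives $T_{\Delta t}\tilde v_k=S_{\Delta t}\tilde v_k$, and Proposition \ref{standard}(v) gives $\Delta(S_{\Delta t}\tilde v_k,u(.,t_{k+1}))\le\eta_k$. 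The same inequality
\[
\Delta\bigl(T_{t_{k+1}}u_0,\,u(.,t_{k+1})\bigr)\le\Delta\bigl(T_{t_k}u_0,\,u(.,t_k)\bigr)+2\eta_k
\]
follows, now with $\eta_k$ legitimately controlled. This is exactly how the paper organizes the argument (compare the decomposition \eqref{decomp_delta_1}--\eqref{decomp_delta_3} in the proof of Proposition \ref{hydro_finite}, which mirrors the present proof at particle level).
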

A crucial point is that properties (1)--(4) in the above Theorem  \ref{th_glimm}  
hold at particle level. 
This will allow us to mimic the scheme at particle level. The proof of Theorem  \ref{th_glimm}   
relies on the following uniform approximation
(in the sense of distance $\Delta$) by step functions, 
which is also important at particle level.
\begin{lemma}  (\cite[Lemma 4.2]{bgrs3}). 
\label{cor_approx} Assume $u_0(.)$ is a.e. $\mathcal R$-valued, has
bounded support and finite variation, and let $(x,t)\mapsto u(x,t)$
be the entropy solution to \eqref{hydrodyn} with Cauchy datum
$u_0(.)$. For every $\eps>0$,  let ${\mathcal P}_\eps$ be the set of
piecewise constant $\mathcal R$-valued functions on $\R$ with
compact support and step lengths at least $\eps$, and  set
\[
\delta_\eps(t):=\eps^{-1}\inf\{ \Delta(u(.),u(.,t)):\,
u(.)\in{\mathcal P}_\eps \}
\]
Then there is a sequence $\eps_n\downarrow 0$ as $n\to\infty$ such
that $\delta_{\eps_n}$ converges to $0$ uniformly on any bounded
subset of $\R^+$.
\end{lemma}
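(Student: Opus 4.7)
The plan proceeds in three steps: uniform regularity estimates on the time slices $u(\cdot,t)$, a per-time approximation by functions in $\mathcal P_\eps$ with rate $o(\eps)$, and a compactness argument in $L^1$ to make the approximation scale uniform in $t$.

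First, I would establish uniform regularity of the family $\mathcal F := \{u(\cdot,t):t\in[0,T]\}$ for any fixed $T>0$. Finite propagation (Proposition \ref{standard}(iv)) yields a common compact support $[-L_T,L_T]$; Proposition \ref{standard}(iii) gives the uniform bound $\mathrm{TV}\,u(\cdot,t)\le V_0:=\mathrm{TV}\,u_0$; and Proposition \ref{proposition_1}(ii) combined with Lemma \ref{nointeraction} applied iteratively to piecewise-constant $\mathcal R$-valued approximations of $u_0$, then passing to the $L^1_{\mathrm{loc}}$-limit via Proposition \ref{standard}(i), yields that $u(\cdot,t)$ is a.e.\ $\mathcal R$-valued. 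Combined with the $L^1_{\mathrm{loc}}$-continuity in $t$, Helly's selection theorem makes $\mathcal F$ relatively compact, hence compact, in $L^1(\R)$.

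Second, for each fixed $u\in\mathcal F$ I would build $v^\eps\in\mathcal P_\eps$ with $\Delta(u,v^\eps)=o(\eps)$ as $\eps\downarrow 0$. Choose a threshold $\eta=\eta(\eps)\downarrow 0$. Since $\mathrm{TV}\,u\le V_0$, $u$ has at most $V_0/\eta$ jumps of size $\ge\eta$; for $\eps$ smaller than their minimal mutual distance these jumps are pairwise at distance $\ge\eps$, so I place a grid at these large jumps, augmented with additional points to force all step lengths into $[\eps,2\eps]$. On each step assign the $\mathcal R$-value $r_k$ nearest to the local representative of $u$ (well defined since $u$ is $\mathcal R$-valued a.e.\ and the residual oscillation on each step is controlled by $\eta$). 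A direct computation of $U(x)-V(x)$ on each step bounds the in-step $\Delta$-error by (local oscillation)$\,\times\,$(step length), and the absence of large jumps inside steps gives oscillation $\lesssim\eta(\eps)$, hence $\Delta(u,v^\eps)\le C\,\eta(\eps)\,\eps=o(\eps)$. Crucially this holds for all $\eps$ smaller than some $\eps_0(u)>0$, not merely along a sparse sequence.

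Third, I would extract a single scale sequence $\eps_n\downarrow 0$ uniform in $t\in[0,T]$. For each $n$, compactness of $\mathcal F$ in $L^1$ provides a finite $\rho_n$-net $\{u_1^n,\dots,u_{N_n}^n\}$; by the second step applied to each $u_j^n$, choosing $\eps_n$ below the minimum of the individual $\eps_0(u_j^n)$ yields $v_j^n\in\mathcal P_{\eps_n}$ with $\Delta(u_j^n,v_j^n)\le\eps_n/n$. Pick $\rho_n\le\eps_n/n$. For any $t\in[0,T]$, selecting $j=j(t)$ with $\|u(\cdot,t)-u_j^n\|_{L^1}\le\rho_n$ and using $\Delta\le\|\cdot\|_{L^1}$ for compactly supported functions, the triangle inequality gives
\[
\Delta\bigl(u(\cdot,t),v_j^n\bigr)\le\Delta\bigl(u(\cdot,t),u_j^n\bigr)+\Delta\bigl(u_j^n,v_j^n\bigr)\le\rho_n+\eps_n/n\le 2\eps_n/n,
\]
so $\delta_{\eps_n}(t)\le 2/n$ uniformly in $t\in[0,T]$.

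The main obstacle is the second step: passing from a naive averaging estimate $\Delta=O(\eps)$ to the required $o(\eps)$. The gain relies on aligning grid points with the BV-significant jumps of $u$ (finitely many for each threshold $\eta$) and on bounding the residual in-step oscillation by $\eta(\eps)\to 0$. The $\mathcal R$-valuedness of $u$ enters in ensuring that the projection onto $\mathcal R$ of the local typical value is absorbed into this oscillation, so that an $\mathcal R$-valued approximant still achieves $o(\eps)$ rather than being limited by the possible gaps in $\mathcal R$.
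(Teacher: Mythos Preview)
The paper does not give a proof of this lemma; it is simply stated with a citation to \cite{bgrs3}. Assessing your proposal on its own merits, the three-step architecture is reasonable, but Steps~2 and~3 each contain a genuine gap.

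In Step~2 there are two problems. First, the bound ``in-step $\Delta$-error $\le$ (oscillation)$\times$(step length)'' tacitly assumes $U(a_k)=V(a_k)$ at every grid point. With the step-mean choice $r_k=|I_k|^{-1}\int_{I_k}u$ this holds, but that value need not lie in $\mathcal R$. With your $\mathcal R$-valued choice the endpoint errors telescope to $\sum_{j<k}(\mathrm{mean}_j-r_j)|I_j|$, which over $O(L/\eps)$ steps is $O(L\eta)$, not $O(\eta\eps)$; to keep this under control one must choose the $r_k$ greedily so as to cancel the running error, and you have not done this. Second, ``no jump of size $\ge\eta$ inside a step'' does not bound the oscillation on that step by $\eta$: many sub-$\eta$ jumps can pile up (e.g.\ jumps $1/k^2$ at $x=1/k$ give residual oscillation $\asymp\sqrt{\eta}$ on the step near $0$). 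A correct argument splits the variation measure into its atomic and diffuse parts and uses uniform continuity of the latter; this produces a threshold $\eps_0(u,\eta)$ rather than a single $\eps_0(u)$, and as $\eta\downarrow 0$ that threshold may shrink to $0$.

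In Step~3 the choices are circular: $\rho_n$ determines the net, the net (together with the target $1/n$) determines how small $\eps_n$ must be, and only then do you ``pick $\rho_n\le\eps_n/n$''. Since refining $\rho_n$ enlarges the net and may force $\eps_n$ smaller, there is no guarantee that $\rho_n\le\eps_n/n$ and $\eps_n\le\min_j\eps_0(u_j^n,\cdot)$ can be satisfied simultaneously. This race between $\rho_n\to 0$ and $\eps_n\to 0$ is exactly the substance of the uniformity claim, and it is not resolved by a routine compactness argument unless you first establish a rate in Step~2 that is uniform over the whole family $\mathcal F$, which you have not.
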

\section{ Proof of hydrodynamics}
\label{proof_hydro}
In this section,   based on \cite[Section 4]{bgrs4}, 
we prove the hydrodynamic limit in the quenched disordered setting,
that is Theorem \ref{th:hydro},  following the
strategy   
introduced in \cite{bgrs1,bgrs2} and significantly strengthened 
in \cite{bgrs3} and \cite{bgrs4}. First, we prove the hydrodynamic 
limit for  ${\mathcal R}^Q$-valued  Riemann initial conditions (the so-called Riemann
problem), and then use a constructive scheme to mimic the proof of 
Theorem \ref{th_glimm} at microscopic level.
\subsection{Riemann problem}\label{riemann}
Let $\lambda,\rho\in{\mathcal R}^Q$ with $\lambda<\rho$ (for $\lambda>\rho$ replace
infimum with supremum below).  We first need to derive hydrodynamics for the Riemann initial condition 
$R_{\lambda,\rho}$ defined in \eqref{eq:rie}. 
Microscopic    Riemann   states with profile \eqref{eq:rie} can be constructed
using the following lemma.
\begin{lemma}\label{big_coupling}  (\cite[Lemma 4.1]{bgrs4}). 
There exist random variables $\alpha$ and
 $(\eta^\rho:\,\rho\in{\mathcal R}^Q)$  on a probability space
$(\Omega_{\mathbf A},\mathcal F_{\mathbf A},\Prob_{\mathbf A})$ such
that
\beq\label{marginals}(\alpha,\eta^\rho)\sim \nu^{Q,\rho}, && \alpha\sim
Q\\ \label{strassen} \Prob_{\mathbf A}-a.s.,&&
\rho\mapsto\eta^\rho\mbox{ is nondecreasing}\eeq
\end{lemma}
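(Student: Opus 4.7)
The objective is to construct on one probability space a simultaneous monotone coupling of the whole family $(\nu^{Q,\rho})_{\rho\in\mathcal{R}^Q}$, with all marginals sharing the same disorder component $\alpha\sim Q$. The main input provided by Proposition \ref{invariant} is twofold: the ordering $\nu^{Q,\rho}\ll\nu^{Q,\rho'}$ for $\rho\le\rho'$, which via Lemma \ref{lemma_conditional}(ii) gives $\nu^{Q,\rho}(d\eta\mid\alpha)\le\nu^{Q,\rho'}(d\eta\mid\alpha)$ for $Q$-a.e.\ $\alpha$, together with weak continuity of $\rho\mapsto\nu^{Q,\rho}$. The natural approach is to build the coupling conditionally on $\alpha$, then integrate against $Q$.

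\emph{Step 1 (countable index).} Choose a countable subset $D\subset\mathcal{R}^Q$ that is dense from both sides at every point of $\mathcal{R}^Q$ (this is possible since $\mathcal{R}^Q$ is closed in $[0,K]$). For any finite chain $\rho_1<\cdots<\rho_n$ in $D$, I will build by induction on $n$ a coupling on $\mathbf{A}\times\mathbf{X}^n$ such that $(\alpha,\eta^{\rho_i})\sim\nu^{Q,\rho_i}$ for each $i$ and $\eta^{\rho_1}\le\cdots\le\eta^{\rho_n}$ almost surely. Given such a coupling up to index $n$, one disintegrates $\nu^{Q,\rho_{n+1}}$ against $\alpha$, applies Lemma \ref{lemma_conditional}(iii) to the ordered pair $\nu^{Q,\rho_n}\ll\nu^{Q,\rho_{n+1}}$ to obtain a conditional kernel $\kappa(d\xi\mid\alpha,\eta)$ under which $\eta\le\xi$, and samples $\eta^{\rho_{n+1}}$ from $\kappa(\cdot\mid\alpha,\eta^{\rho_n})$. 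Regular conditional probabilities exist because $\mathbf{A}$ and $\mathbf{X}$ are compact metric spaces. The resulting finite-dimensional laws are obviously consistent under coordinate deletion, so Kolmogorov's extension theorem delivers a probability measure on $\mathbf{A}\times\mathbf{X}^D$ carrying $\alpha$ and $(\eta^\rho)_{\rho\in D}$ with the required marginals \eqref{marginals} and pathwise monotonicity in $\rho$ on $D$.

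\emph{Step 2 (extension to $\mathcal{R}^Q$).} For $\rho\in\mathcal{R}^Q\setminus D$, define site-wise
\[
\eta^\rho(x):=\sup\{\eta^{\rho'}(x):\rho'\in D,\ \rho'<\rho\},\qquad x\in\Z,
\]
which is well defined and $\{0,\dots,K\}$-valued because the supremum is over a countable chain of integers bounded by $K$. Monotonicity in $\rho$ extends automatically to all of $\mathcal{R}^Q$. To identify the marginal, pick any sequence $D\ni\rho_k\uparrow\rho$; then $\eta^{\rho_k}\to\eta^\rho$ coordinatewise, hence in $\mathbf{X}$ for the product topology, so $(\alpha,\eta^{\rho_k})\to(\alpha,\eta^\rho)$ almost surely. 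By weak continuity of $\rho\mapsto\nu^{Q,\rho}$ from Proposition \ref{invariant} the limiting law equals $\nu^{Q,\rho}$, yielding \eqref{marginals} for all $\rho\in\mathcal{R}^Q$, and \eqref{strassen} from the monotonicity built in Step 1.

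\emph{Main obstacle.} The technical heart is the inductive step in Step 1, which requires extending the pairwise conditional Strassen coupling of Lemma \ref{lemma_conditional}(iii) to chains of length $n+1$ while keeping the same $\alpha$-component throughout. Without the conditional stochastic order $\nu^{Q,\rho}(\cdot\mid\alpha)\le\nu^{Q,\rho'}(\cdot\mid\alpha)$ this would be impossible, because stochastic order of the joint laws alone would allow one to freely reshuffle $\alpha$ at each stage. The disintegration-and-glue argument above only works because Proposition \ref{invariant} yields precisely the stronger conditional ordering, and because the Polish structure of $\mathbf{A}\times\mathbf{X}$ ensures the existence of the regular conditional kernels $\kappa(\cdot\mid\alpha,\eta)$ used at each step.
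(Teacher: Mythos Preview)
The paper itself does not prove this lemma; it is only stated and attributed to \cite{bgrs4}, so there is no in-paper argument to compare against.

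Your outline is natural, but Step~1 contains a real gap: the finite-dimensional laws you build are \emph{not} consistent under deletion of a middle index, so Kolmogorov's extension theorem does not apply. Your induction for a chain $\rho_1<\cdots<\rho_n$ adds $\eta^{\rho_n}$ via a Strassen kernel conditioned on $(\alpha,\eta^{\rho_{n-1}})$; this makes the $(\rho_1,\ldots,\rho_{n-1})$-marginal correct, but if one deletes $\rho_2$, the resulting law of $(\alpha,\eta^{\rho_1},\eta^{\rho_3})$ has $\eta^{\rho_3}$ depending on $\eta^{\rho_1}$ only through the integrated-out $\eta^{\rho_2}$, which has no reason to coincide with the direct two-step Strassen coupling of $\nu^{Q,\rho_1}$ and $\nu^{Q,\rho_3}$ that your recipe assigns to the shorter chain $\rho_1<\rho_3$. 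The usual repair---fix an enumeration of $D$ and use Ionescu--Tulcea instead---then forces you, at some stage, to insert a new index strictly between two already constructed ones, and this requires a \emph{sandwich} coupling: given $\eta^{a}\le\eta^{b}$ with the correct $\alpha$-conditional marginals, produce $\eta^{c}$ with $\eta^a\le\eta^c\le\eta^b$ and $(\alpha,\eta^c)\sim\nu^{Q,c}$. This three-marginal statement is not a formal consequence of the pairwise Lemma~\ref{lemma_conditional}; it is precisely the missing ingredient. It can be supplied either by a direct argument exploiting that $\mathbf X=\{0,\ldots,K\}^\Z$ is a complete lattice with closed order, or by invoking a general simultaneous monotone-coupling theorem for stochastically ordered families on partially ordered Polish spaces (cf.\ \cite{kk}).

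A minor secondary point: in Step~2, a subset of $\mathcal R^Q$ ``dense from both sides at every point'' cannot exist at isolated points of $\mathcal R^Q$. You should include all (at most countably many) isolated points in $D$, and apply the supremum definition and weak-continuity limit only at genuine accumulation points.
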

Let  $\overline{\nu}^{Q,\lambda,\rho}$  denote the distribution of
$(\alpha,\eta^\lambda,\eta^\rho)$, and
$\overline{\nu}^{\lambda,\rho}_\alpha$ the conditional distribution of
$(\alpha,\eta^\lambda,\eta^\rho)$ given $\alpha$.
 Recall the definition \eqref{def:spacetimeshift} of the space-time shift 
 $\theta_{x_0,t_0}$ on $\Omega$ for $(x_0,t_0)\in\Z\times\R^+$. 
We now  introduce an extended shift $\theta'$ on $\Omega'={\mathbf
A}\times{\mathbf X}^2\times\Omega$. If
$\omega'=(\alpha,\eta,\xi,\omega)$
%
denotes a generic element of $\Omega'$, we set
\be\label{extended_shift}\theta'_{x,t}\omega' =
(\tau_x\alpha,\tau_x\eta_t(\alpha,\eta,\omega),\tau_x\eta_t(\alpha,\xi,\omega),\theta_{x,t}\omega)
\ee
 It is important to note that this shift incorporates disorder.
Let $T:{\mathbf X}^2\to{\mathbf X}$ be given by
\be\label{transfo_T}
T(\eta,\xi)(x)=\eta(x){\bf 1}_{\{x< 0\}}+\xi(x){\bf 1}_{\{x\geq
0\}} \ee
%
%
 A strong (that is almost sure with respect to the Poisson space) 
form of hydrodynamic limit for Riemann data can now be stated as follows.
\begin{proposition}
\label{corollary_2_2}  (\cite[Proposition 4.1]{bgrs4}).  Set, for $t\ge 0$,
\be \label{def_empirical_shift}
\beta^N_t(\omega')(dx):=\pi^N(\eta_t(\alpha,T(\eta,\xi),\omega))(dx)
\ee
For all $t>0$, $s_0\geq 0$ and $x_0\in\R$, we have that, for
$Q$-a.e. $\alpha\in{\mathbf A}$,
\[ 
\lim_{N\to\infty}\beta^N_{Nt}(\theta'_{\lfloor Nx_0\rfloor ,Ns_0}\omega')(dx)=R_{\lambda,\rho}(.,t)dx,
\quad\overline{\nu}_\alpha^{\lambda,\rho}\otimes\Prob\mbox{-a.s.}
\] 
\end{proposition}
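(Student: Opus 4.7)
The plan is to prove vague convergence of $\beta^N_{Nt}(\theta'_{\lfloor Nx_0\rfloor,Ns_0}\omega')$ toward $R_{\lambda,\rho}(\cdot,t)dx$ by reducing it to computation of asymptotic currents, and identifying these currents via the variational formula of Proposition \ref{proposition_1}(iii). Since the limit measure is absolutely continuous, the equivalence \eqref{equiv_delta} reduces the task to proving, for every $v<w$, that
\[
\lim_{N\to\infty}\int_v^w \beta^N_{Nt}(\theta'_{\lfloor Nx_0\rfloor,Ns_0}\omega')(dx)=t\bigl[\mathcal G^Q_{v/t}(\lambda,\rho)-\mathcal G^Q_{w/t}(\lambda,\rho)\bigr].
\]

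First I would reduce the shifted problem to an unshifted one. Using the graphical identities \eqref{mapping_markov}--\eqref{mapping_shift} and the commutation relation \eqref{commutation}, the effect of $\theta'_{\lfloor Nx_0\rfloor,Ns_0}$ amounts to reinitializing at a shifted space-time origin, where the disorder is $\tau_{\lfloor Nx_0\rfloor}\alpha$ and the configurations are translates of the Riemann evolution at time $Ns_0$. Translation invariance of $\overline{\nu}^{Q,\lambda,\rho}$, ergodicity of $Q$, and extremality of $\nu^{Q,\rho}$ from Proposition \ref{invariant} (which gives ergodicity of the joint disorder--particle process under space shifts) let us handle all $(x_0,s_0)$ simultaneously on a $Q$-full subset of environments via a space-time ergodic theorem.

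The core of the proof is the identification of the asymptotic current. By Lemma \ref{big_coupling} we have $\eta^\lambda\leq T(\eta^\lambda,\eta^\rho)\leq\eta^\rho$ pointwise; complete monotonicity \eqref{attractive_1} preserves this order for all $t\geq 0$, so the Riemann evolution is sandwiched between the two equilibrium evolutions started from $\nu^{Q,\lambda}$ and $\nu^{Q,\rho}$. For equilibrium data at density $r\in\mathcal R^Q$, the space-time ergodic theorem applied to the microscopic flux $j$ from \eqref{def_f} yields, $\overline{\nu}^{Q,\lambda,\rho}_\alpha\otimes\Prob$-almost surely, that the time-averaged current through an observer of macroscopic speed $v$ converges to $G^Q(r)-vr$, by definition \eqref{flux}. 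I would then interpolate by coupling the Riemann evolution with auxiliary evolutions started from $T(\eta^r,\eta^{r'})$ for $r,r'\in[\lambda,\rho]\cap\mathcal R^Q$, and apply the almost-sure local equilibrium argument of \cite{av} in the current-based form of \cite{bgrs3}. This shows that $N^{-1}$ times the current through $(Nx,Nt)$ converges almost surely to $\inf_{r\in[\lambda,\rho]\cap\mathcal R^Q}[tG^Q(r)-xr]=t\mathcal G^Q_{x/t}(\lambda,\rho)$; differencing at the endpoints $v/t$ and $w/t$ produces the desired cumulative mass.

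The main obstacle is this last identification. Attractiveness alone only sandwiches the limiting current between various equilibrium values; to pin it down as the infimum, one must simultaneously handle a dense countable family of auxiliary densities $r\in\mathcal R^Q$, using the weak continuity of $\rho\mapsto\nu^{Q,\rho}$ and the conditional ordering \eqref{ordered_measures} from Proposition \ref{invariant}, and then exploit spatial monotonicity of the profile together with uniqueness of the minimizer $h_c(x/t)$ from Proposition \ref{proposition_1}(i) at Lebesgue-a.e.\ $x/t$. A further subtlety is that the exceptional $Q$-null set of environments must be fixed \emph{before} the shifts $\theta'_{\lfloor Nx_0\rfloor,Ns_0}$ are applied; this is handled by the exponential error bounds and Borel--Cantelli arguments developed in \cite{bgrs3}, together with the extended space-time ergodic theorem for the joint disorder-particle process from \cite{mrs}.
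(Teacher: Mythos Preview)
Your reduction to an asymptotic-current statement via \eqref{diff_currents_micro} and the variational formula \eqref{limiting_current} is exactly the paper's route: Proposition~\ref{corollary_2_2} is deduced from Proposition~\ref{proposition_2_2}. Your upper bound (limsup $\leq\mathcal G_v(\lambda,\rho)$) via current monotonicity, comparing $\phi^v_{Nt}$ for $T(\eta^\lambda,\eta^\rho)$ with the equilibrium current for $\eta^r$ and then minimizing over $r$, is precisely the paper's Step~two, relying on Lemma~\ref{current_comparison}.

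Where you diverge is the lower bound. You invoke ``interpolation by coupling with $T(\eta^r,\eta^{r'})$'', ``spatial monotonicity of the profile'', and ``uniqueness of the minimizer $h_c(x/t)$''. None of these appear in the paper's argument, and it is not clear they would close the gap: current monotonicity only yields the limsup direction, and profile-level arguments in the style of \cite{av} give weak limits, not the quenched a.s.\ statement needed here. The paper's Step~one is a genuinely different mechanism: it introduces a Poisson-randomized observer path $x^N_.(\omega^+)$, forms the space-time empirical measure $m_{tN,\eps}$ of the process seen from this observer (with the disorder component included), writes the spatially averaged current as a mean-zero martingale plus $\int(j-v\eta)\,dm_{tN,\eps}$, kills the martingale by an exponential moment bound and Borel--Cantelli, and then uses the uniform large-deviation upper bound of Lemma~\ref{deviation_empirical} to force every subsequential limit of $m_{tN,\eps}$ into ${\mathcal I}_{L^v}={\mathcal I}_{\mathfrak L}\cap{\mathcal S}$. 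Combined with the sandwich $\nu^{Q,\lambda}\ll\cdot\ll\nu^{Q,\rho}$, this places the limit in $\mathcal M^Q_{\lambda,\rho}$ (Lemma~\ref{lemma_empirical}), which immediately gives liminf $\geq\inf_{r\in[\lambda,\rho]\cap\mathcal R^Q}[G^Q(r)-vr]$. The density profile is never identified directly; everything happens at the level of empirical measures and currents. Your last paragraph hints at exponential bounds and Borel--Cantelli, but attaches them to the wrong object: they are used for the martingale remainder and for the large-deviation control of $m_{tN,\eps}$, not for handling the space-time shifts $(x_0,s_0)$, which are built into the statement of Proposition~\ref{proposition_2_2} from the outset.
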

 Proposition \ref{corollary_2_2} will follow from a law of large
numbers for currents.
Let $x_.=(x_t,\,t\geq 0)$ be a $\Z$-valued {\em cadlag} random path,
with $\abs{x_t-x_{t^-}}\leq 1$, independent of the Poisson measure
$\omega$.
We define the particle current seen by an observer travelling along
this path by
\be \label{current_3} \varphi^{x_.}_t(\alpha,\eta_0,\omega)
=\varphi^{x_.,+}_t(\alpha,\eta_0,\omega)
-\varphi^{x_.,-}_t(\alpha,\eta_0,\omega)+\widetilde{\varphi}^{x_.}_t(\alpha,\eta_0,\omega)
\ee
where
$\varphi^{x_.,\pm}_t(\alpha,\eta_0,\omega)$ count the number of
rightward/leftward crossings  of $x_.$ due to particle
jumps, and
$\widetilde{\varphi}^{x_.}_t(\alpha,\eta_0,\omega)$ is the current due
to the self-motion of the observer.
We shall  write  $\varphi^v_t$ in the particular case $x_t=\lfloor
vt\rfloor$.
Set $\phi^{v}_t(\omega'):=\varphi^{v}_t(\alpha,T(\eta,\xi),\omega)$.
Note that  for $(v,w)\in\R^2$,
\be\label{diff_currents_micro}\beta^N_{Nt}(\omega')([v,w])
=t(Nt)^{-1}(\phi^{v/t}_{Nt}(\omega')-\phi^{w/t}_{Nt}(\omega'))\ee
 We view \eqref{diff_currents_micro} as a microscopic analogue
 of \eqref{limiting_current}. Thus,  Proposition \ref{corollary_2_2} 
 boils down to showing that each term of \eqref{diff_currents_micro} 
 converges to its counterpart in \eqref{limiting_current}.
\begin{proposition}
\label{proposition_2_2}  (\cite[Proposition 4.2]{bgrs4}). 
For all $t>0$, $a\in\R^+,b\in\R$ and  $v\in\R$,
\be\lim_{N\to\infty}(Nt)^{-1}\phi^{v}_{Nt}(\theta'_{\lfloor b N\rfloor ,a N}\omega') =
\mathcal G_v(\lambda,\rho)\qquad\overline{\nu}^{Q,\lambda,\rho}\otimes\Prob-a.s. \label{as}\ee
 where $\mathcal G_v(\lambda,\rho)$ is defined by \eqref{limiting_current_0}. 
\end{proposition}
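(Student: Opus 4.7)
My plan is to establish matching upper and lower bounds on $(Nt)^{-1}\phi^v_{Nt}$ by exploiting (a) the attractive coupling provided by Lemma \ref{big_coupling}, (b) the characterization of extremal invariant translation-invariant measures in Proposition \ref{invariant}, and (c) a space-time ergodic argument in the spirit of \cite{av} adapted to the joint disorder-particle process.

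First I would reduce to the case $a=b=0$. The joint measure $\overline{\nu}^{Q,\lambda,\rho}$ is translation invariant under $\tau_x$ acting jointly on $(\alpha,\eta,\xi)$, and the Poisson measure $\Prob$ is space-time stationary and ergodic. Combined with the commutation properties \eqref{mapping_markov}--\eqref{mapping_shift} of the graphical construction, this shows that if the claimed limit holds almost surely for $(a,b)=(0,0)$ with some deterministic value, it also holds for any $(a,b)$ with the same value; conversely, any $\Prob_0\otimes\Prob$-almost sure limit is deterministic by a zero-one law arising from ergodicity of $Q$ under $\tau_x$. I would next express the current as $\phi^v_{Nt} = \int_0^{Nt} j^v(\alpha,\eta_s)\,ds + M_{Nt}$, where $j^v$ is obtained from $j_1$ by incorporating the observer drift, and $M_\cdot$ is a Poissonian martingale whose quadratic variation grows linearly, so that $(Nt)^{-1}M_{Nt}\to 0$ a.s.\ by standard exponential concentration.

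For the upper bound, fix $r\in[\lambda,\rho]\cap\mathcal R^Q$ and extend Lemma \ref{big_coupling} to include a coupled equilibrium $\eta^r$ with $\eta^\lambda\leq\eta^r\leq\eta^\rho$. The step profile $T(\eta^\lambda,\eta^\rho)$ has only positive discrepancies w.r.t.\ $\eta^r$ on $\{x\geq 0\}$ and only negative discrepancies on $\{x<0\}$. Under the coupled dynamics, the complete monotonicity property \eqref{attractive_1} combined with irreducibility \eqref{irreducibility_misanthrope} (as used in Lemma \ref{lemma_successive}(iii)) implies that pairs of opposite-signed discrepancies annihilate, so the net number of discrepancies that cross the observer path by time $Nt$ is $o(N)$ almost surely; macroscopic stability from \cite{mrs} (requiring the finite third moment of $p$) supplies the required uniform control. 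It follows that $(Nt)^{-1}\phi^v_{Nt}(T(\eta^\lambda,\eta^\rho))$ and $(Nt)^{-1}\phi^v_{Nt}(\eta^r)$ have the same limit. The latter equals $G(r)-vr$ by the space-time ergodic theorem of \cite{mrs} applied to the stationary process $(\alpha_s,\eta_s)\sim\nu^{Q,r}$, using property (B3) of Corollary \ref{corollary_invariant}. Minimizing over $r\in[\lambda,\rho]\cap\mathcal R^Q$ gives $\limsup\leq \mathcal G_v(\lambda,\rho)$.

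For the lower bound, I would let $\ell$ be any deterministic subsequential a.s.\ limit of $(Nt)^{-1}\phi^v_{Nt}$ and consider the Cesàro average of the distribution of $(\alpha,\tau_{\lfloor vs\rfloor}\eta_s)_{s\in[0,Nt]}$. Any weak limit $\overline\mu$ is translation invariant on $\mathbf A\times\mathbf X$ and invariant for the generator $\mathfrak L$ twisted by the observer drift; by Proposition \ref{invariant}, $\overline\mu=\int\nu^{Q,\rho'}\,\mu(d\rho')$ for a probability $\mu$ on $\mathcal R^Q$. Attractivity applied to the coupled evolution started from $(\eta^\lambda,T(\eta^\lambda,\eta^\rho),\eta^\rho)$ keeps the middle component sandwiched between the two equilibria; taking Cesàro space-time averages and using \eqref{Rezakhanlou} forces $\mu$ to be supported in $[\lambda,\rho]\cap\mathcal R^Q$. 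Integrating the microscopic flux against $\overline\mu$ then yields $\ell=\int(G(\rho')-v\rho')\,\mu(d\rho')\geq\mathcal G_v(\lambda,\rho)$.

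The main obstacle is the passage from convergence in probability or Cesàro limits to the pointwise almost sure convergence demanded in \eqref{as}; this is what \cite{bgrs3} developed in the non-disordered case and what \cite{bgrs4} transports to the joint disorder-particle setting. It relies on three delicate ingredients which must be woven together: exponential bounds on the martingale part $M_\cdot$, the strengthened Rezakhanlou-type ergodic theorem of \cite{mrs} valid for $p$ with finite first moment and applicable to the joint process, and the macroscopic stability of \cite{mrs} (needing the third moment of $p$) which gives the uniform-in-time control of discrepancy currents used in the sandwiching argument. The rest is bookkeeping, but this combination is the hard part.
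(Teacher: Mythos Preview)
Your upper bound argument has a genuine error. You claim that because opposite-signed discrepancies between $T(\eta^\lambda,\eta^\rho)$ and $\eta^r$ annihilate, the net number crossing the observer in time $Nt$ is $o(N)$, so the two currents have the \emph{same} limit $G^Q(r)-vr$. But this would hold for every $r\in[\lambda,\rho]\cap\mathcal R^Q$, which is absurd: the Riemann current equals $\mathcal G_v(\lambda,\rho)=\inf_r[G^Q(r)-vr]$, not $G^Q(r)-vr$ for each $r$. Concretely, if $v$ is large the observer runs into the $\rho$-region and sees current $G^Q(\rho)-v\rho$, not $G^Q(r)-vr$; the $O(N)$ positive discrepancies on the right do \emph{not} all get annihilated by negatives coming from the left. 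The paper's upper bound is far simpler and does not use discrepancy annihilation or macroscopic stability at all: Lemma~\ref{current_comparison} says $\phi^v_t(\alpha,\eta,\xi,\omega)$ is increasing in $\eta$ and decreasing in $\xi$, so $\phi^v_{Nt}\circ\theta'_{\lfloor bN\rfloor,aN}(\alpha,\eta^\lambda,\eta^\rho,\omega)\leq\phi^v_{Nt}\circ\theta'_{\lfloor bN\rfloor,aN}(\alpha,\eta^r,\eta^r,\omega)$ deterministically, and the right-hand side converges a.s.\ to $G^Q(r)-vr$ by the equilibrium case (which is obtained from the liminf and limsup of Step one applied with $\lambda=\rho=r$).

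Two further gaps. First, your reduction to $a=b=0$ does not deliver almost sure convergence: stationarity of $\overline{\nu}^{Q,\lambda,\rho}\otimes\Prob$ under $\theta'_{\lfloor bN\rfloor,aN}$ only transports convergence in law, and since the shift depends on $N$ you cannot transfer the null set. The paper keeps $(a,b)$ throughout, writing the shifted initial state $\varpi_{aN}$ and using that it is independent of $(\theta_{\lfloor bN\rfloor,aN}\omega,\omega^+)$ so that the exponential martingale bounds are uniform and Borel--Cantelli applies. Second, your lower bound identifies the right object (subsequential limits of the space-time empirical measure lie in $\mathcal M^Q_{\lambda,\rho}$), but to make this an almost sure statement the paper needs the uniform large deviation upper bound of Lemma~\ref{deviation_empirical} for the space-averaged empirical measure $m_{tN,\eps}$, combined with the ergodic theorem (Proposition~\ref{prop_ergo}) applied to the stationary sandwiching processes $(\alpha,\eta^\lambda_.)$ and $(\alpha,\eta^\rho_.)$; Ces\`aro weak limits in law are not enough. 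Macroscopic stability plays no role in this proposition (it enters only later, in the Cauchy step).
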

To prove Proposition \ref{proposition_2_2}, we introduce a
probability space $\Omega^+$, whose generic element is denoted by
$\omega^+$, on which is defined a Poisson process
$(N_t(\omega^+))_{t\ge 0}$ with intensity $\abs{v}$ ($v\in\R$).
Denote by ${\Prob}^+$ the associated probability. Set
\begin{eqnarray}\label{def_poisson}
x_s^N(\omega^+)&:=&({\rm sgn}(v))\left[N_{a N+s}(\omega^+)-N_{aN}(\omega^+)\right]\\
\label{mapping_tilde}\widetilde{\eta}^N_s(\alpha,\eta_0,\omega,\omega^+)
&:=&\tau_{x_s^N(\omega^+)}\eta_s(\alpha,\eta_0,\omega)\\
\label{mapping_tilde_alpha}
\widetilde{\alpha}^N_s(\alpha,\omega^+) & := & \tau_{x^N_s(\omega^+)}\alpha
\end{eqnarray}
Thus  $(\widetilde{\alpha}_s^N,\widetilde{\eta}_s^N)_{s\ge 0}$ is a
Feller process with generator
\[
 L^v= {\mathfrak L} +S^v,\quad S^v f(\alpha,\zeta)=\abs{v}
[f(\tau_{{\rm sgn}(v)}\alpha,\tau_{{\rm
sgn}(v)}\zeta)-f(\alpha,\zeta)] \]
 for $f$ local and $\alpha\in\mathbf A$, $\zeta\in{\mathbf X}$.
Since any translation invariant measure on ${\mathbf
A}\times{\mathbf X}$ is stationary for the pure shift generator
$S^v$, we have
 ${\mathcal I}_{\mathfrak L}\cap{\mathcal S}={\mathcal
I}_{L^v}\cap{\mathcal S}$. 
Define the time and space-time empirical measures (where $\eps>0$) by
\begin{eqnarray} \label{def_empirical} 
m_{tN}(\omega',\omega^+)&:=&(Nt)^{-1}\int_0^{tN}
\delta_{(\widetilde{\alpha}^N_s(\alpha,\omega^+),
\widetilde{\eta}^N_s(\alpha,T(\eta,\xi),\omega,\omega^+))}ds\\
\label{def_empirical_2}
m_{tN,\eps}(\omega',\omega^+)&:=&\abs{\Z\cap[-\eps N,\eps
N]}^{-1}\sum_{x\in\Z:\,\abs{x}\leq \eps
N}\tau_xm_{tN}(\omega',\omega^+) 
\end{eqnarray}
Notice  that there is a disorder component we cannot omit in the empirical measure, 
although ultimately  we are only interested
in the behavior of the $\eta$-component.
Let  ${\mathcal M}_{\lambda,\rho}^Q$  denote
the compact set   of probability measures 
 $\mu(d\alpha,d\eta)\in \mathcal I_{\mathfrak L}\cap\mathcal S$  such that
$\mu$ has $\alpha$-marginal $Q$, and
 $\nu^{Q,\lambda}\ll\mu\ll\nu^{Q,\rho}$.  By Proposition \ref{invariant},
\be\label{setofmeasures}
\mathcal M_{\lambda,\rho}^Q=\left\{\nu(d\alpha,d\eta)
=\int\nu^{Q,r}(d\alpha,d\eta)\gamma(dr):\,
\gamma\in\mathcal P([\lambda,\rho]\cap{\mathcal R}^Q)\right\}
\ee
The key ingredients for Proposition \ref{proposition_2_2} are the
following lemmas.
\begin{lemma}\label{current_comparison}  (\cite[Lemma 4.2]{bgrs4}). 
The function $\phi^v_t(\alpha,\eta,\xi,\omega)$ is increasing in $\eta$, decreasing in $\xi$.
\end{lemma}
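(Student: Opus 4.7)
The plan is to exploit the graphical construction of Subsection~\ref{subsec:graphical} together with the complete monotonicity property \eqref{attractive_1}. The starting point is an identity expressing the current in terms of a particle count: for every $\zeta\in\mathbf X$ and every observer path $x_\cdot$,
\[
\varphi^{x_\cdot}_t(\alpha,\zeta,\omega)\;=\;N^\zeta_t-N^\zeta_0,
\qquad\text{with}\qquad
N^\zeta_s:=\sum_{z\ge x_s}\eta_s(\alpha,\zeta,\omega)(z).
\]
Indeed, each rightward (resp.\ leftward) particle jump across $x_\cdot$ changes $N^\zeta_s$ by $+1$ (resp.\ $-1$), accounting for $\varphi^{x_\cdot,+}-\varphi^{x_\cdot,-}$, while each step of the observer changes $N^\zeta_s$ by $\mp\eta_s(\cdot)$, which is precisely the self-motion contribution $\widetilde\varphi^{x_\cdot}_t$ in the decomposition \eqref{current_3}. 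Although $N^\zeta_s$ itself may be infinite, all differences appearing below are well defined, since the current over a bounded time interval depends on only finitely many Poissonian events.

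\emph{Increasing in $\eta$.} Fix $\xi,\alpha,\omega$ and let $\eta^1\le\eta^2$. Setting $\zeta^i:=T(\eta^i,\xi)$, the definition \eqref{transfo_T} yields $\zeta^1\le\zeta^2$ with equality on $\{z\ge 0\}=\{z\ge x_0\}$. Complete monotonicity \eqref{attractive_1} then gives $\eta_s(\alpha,\zeta^1,\omega)\le\eta_s(\alpha,\zeta^2,\omega)$ for every $s\ge 0$. Agreement of the initial configurations on the right of the observer forces $N^{\zeta^1}_0=N^{\zeta^2}_0$, while $N^{\zeta^1}_t\le N^{\zeta^2}_t$, so that
\[
\phi^v_t(\alpha,\eta^2,\xi,\omega)-\phi^v_t(\alpha,\eta^1,\xi,\omega)
=N^{\zeta^2}_t-N^{\zeta^1}_t\;\ge\;0.
\]

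\emph{Decreasing in $\xi$.} Fix $\eta,\alpha,\omega$ and let $\xi^1\le\xi^2$. Now $\zeta^i:=T(\eta,\xi^i)$ satisfy $\zeta^1\le\zeta^2$ with equality on $\{z<0\}$, so the ``second-class'' discrepancies $\delta_s:=\eta_s(\alpha,\zeta^2,\omega)-\eta_s(\alpha,\zeta^1,\omega)\ge 0$ are initially supported on $\{z\ge 0\}$. A case analysis of each Poissonian update in the graphical construction shows that the total discrepancy mass $\sum_z\delta_s(z)$ is invariant in time: when both processes perform a jump, $\delta$ is unaffected; when a jump is triggered in only one of the two processes, a unit of $\delta$ is shifted from the target site to the source site (or vice versa), preserving the sum. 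Hence
\[
N^{\zeta^2}_t-N^{\zeta^1}_t=\sum_{z\ge x_t}\delta_t(z)\;\le\;\sum_z\delta_t(z)=\sum_z\delta_0(z)=\sum_{z\ge 0}(\xi^2-\xi^1)(z)=N^{\zeta^2}_0-N^{\zeta^1}_0,
\]
which yields $\phi^v_t(\alpha,\eta,\xi^2,\omega)-\phi^v_t(\alpha,\eta,\xi^1,\omega)\le 0$.

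The part concerning $\eta$ is a direct consequence of complete monotonicity. The main obstacle lies in the part concerning $\xi$, where monotonicity of trajectories alone is not enough: one genuinely needs the \emph{conservation of the total number of discrepancies} in the coupled dynamics, together with the observation that these discrepancies are initially confined to $\{z\ge 0\}$, so that at any later time their number on $\{z\ge x_t\}$ cannot exceed their (time-invariant) total mass. A minor subtlety concerns the finiteness of $N^\zeta_s$, handled by a standard truncation argument approximating $\zeta^2-\zeta^1$ by a finitely supported perturbation, which is legitimate thanks to the local nature of $\phi^v_t$ over a bounded time window.
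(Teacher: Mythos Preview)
Your argument is correct and aligned with the paper, which gives no details beyond the one-line remark that the lemma ``is a consequence of the monotonicity property \eqref{attractive_1}''. Your treatment of the $\eta$-variable is exactly the intended one.

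For the $\xi$-variable, though, your detour through conservation of the total discrepancy mass and a truncation step is unnecessary: the situation is symmetric to the $\eta$ case once you count particles to the \emph{left} of the observer rather than to the right. With $M^\zeta_s:=\sum_{z<x_s}\eta_s(\alpha,\zeta,\omega)(z)$ one has the mirror identity $\varphi^{x_\cdot}_t=M^\zeta_0-M^\zeta_t$. For $\xi^1\le\xi^2$ the configurations $\zeta^i=T(\eta,\xi^i)$ agree on $\{z<0\}=\{z<x_0\}$, so $M^{\zeta^1}_0=M^{\zeta^2}_0$, while \eqref{attractive_1} gives $M^{\zeta^1}_t\le M^{\zeta^2}_t$; hence
\[
\phi^v_t(\alpha,\eta,\xi^2,\omega)-\phi^v_t(\alpha,\eta,\xi^1,\omega)=-\bigl(M^{\zeta^2}_t-M^{\zeta^1}_t\bigr)\le 0.
\]
This uses only monotonicity, in keeping with the paper's claim, and the relevant difference $M^{\zeta^2}_t-M^{\zeta^1}_t=\sum_{z<x_t}\delta_t(z)$ is automatically finite (being equal to a difference of two finite currents), so no truncation is needed.
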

\begin{lemma}\label{lemma_empirical}  (\cite[Lemma 4.3]{bgrs4}). 
With
$\overline{\nu}^{Q,\lambda,\rho}\otimes{\Prob}\otimes{\Prob}^+$-probability
one, every subsequential limit as $N\to\infty$ of
$m_{tN,\eps }(\theta'_{\lfloor b N\rfloor ,a
N}\omega',\omega^+)$ lies in $\mathcal M_{\lambda,\rho}^Q$.
\end{lemma}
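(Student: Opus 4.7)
\emph{Plan.} The strategy is to verify that any subsequential limit $\mu$ of
$m_{tN,\eps}(\theta'_{\lfloor bN\rfloor,aN}\omega',\omega^+)$ satisfies the four
defining properties of $\mathcal M^Q_{\lambda,\rho}$: translation invariance
($\mu\in\mathcal S$), invariance under the joint generator $\mathfrak L$
($\mu\in\mathcal I_{\mathfrak L}$), $\alpha$-marginal equal to $Q$, and the
stochastic sandwich $\nu^{Q,\lambda}\ll\mu\ll\nu^{Q,\rho}$. Translation invariance
will come from the spatial averaging in \eqref{def_empirical_2}; invariance under
the enlarged generator $L^v=\mathfrak L+S^v$ (hence under $\mathfrak L$ once
translation invariance kills $S^v$) from the time integration in
\eqref{def_empirical}; the $\alpha$-marginal from ergodicity of $Q$ under
$(\tau_x)_{x\in\Z}$; and the sandwich from attractiveness \eqref{attractive_1}
combined with Lemma \ref{big_coupling}. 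The representation \eqref{setofmeasures}
then places $\mu$ in $\mathcal M^Q_{\lambda,\rho}$.

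\emph{Translation invariance, $\alpha$-marginal, and $\mathfrak L$-invariance.}
For any bounded local function $f$ on $\mathbf A\times\mathbf X$ and any $y\in\Z$,
the difference $\int f\,dm_{tN,\eps}-\int f\circ\tau_y\,dm_{tN,\eps}$ is a boundary
sum supported on at most $2|y|$ translates near $|x|=\eps N$ and is bounded by
$2|y|\,\|f\|_\infty/(2\eps N+1)$, so it vanishes as $N\to\infty$. Since
$(\widetilde\alpha^N_s,\widetilde\eta^N_s)$ is Markov with generator $L^v$,
Dynkin's formula together with a standard $L^2$ maximal inequality for the
associated martingale (whose quadratic variation on $[0,Nt]$ is $O(Nt)$ for
bounded local $f$) yields $\int L^v f\,dm_{tN,\eps}\to 0$ almost surely along
suitable subsequences; combined with translation invariance this forces
$\int\mathfrak L f\,d\mu=0$, since $\int S^v f\,d\mu=0$ for any shift-invariant
$\mu$. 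The $\alpha$-marginal of $m_{tN,\eps}(\theta'_{\lfloor bN\rfloor,aN}\omega',\omega^+)$
is a Cesàro average of $\delta_{\tau_{x+\lfloor bN\rfloor+x^N_s}\alpha}$ over
$|x|\le\eps N$ and $s\in[0,tN]$; since $x^N_s=O(s)$ is negligible against the
spatial range, ergodicity of $Q$ under $(\tau_x)_{x\in\Z}$ forces this marginal
to converge almost surely to $Q$.

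\emph{Main obstacle: the stochastic sandwich.} Under $\overline\nu^{Q,\lambda,\rho}$,
Lemma \ref{big_coupling} provides coordinatewise ordered variables
$\eta^\lambda\le\eta^\rho$ with marginals $\nu^{Q,\lambda}$ and $\nu^{Q,\rho}$,
and $T(\eta^\lambda,\eta^\rho)$ lies coordinatewise between $\eta^\lambda$ and
$\eta^\rho$. Driving the three evolutions with the common Poisson measure $\omega$,
attractiveness \eqref{attractive_1} preserves the ordering at all times $s\ge 0$,
and applying the common shift $\tau_{x^N_s(\omega^+)}$ from \eqref{mapping_tilde}
preserves coordinatewise order as well. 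Consequently, the analogous empirical
measures $m^\lambda_{tN,\eps}$ and $m^\rho_{tN,\eps}$ built from
$\eta_s(\alpha,\eta^\lambda,\omega)$ and $\eta_s(\alpha,\eta^\rho,\omega)$
sandwich $m_{tN,\eps}(\theta'_{\lfloor bN\rfloor,aN}\omega',\omega^+)$
against any bounded local $f(\alpha,\cdot)$ nondecreasing in $\eta$. Since
$\nu^{Q,\rho}_\alpha$ is $L_\alpha$-invariant (Corollary \ref{corollary_invariant})
and $\nu^{Q,\rho}$ is ergodic under the joint shift, the multiparameter ergodic
theorem applied to the joint disorder-particle process (using \eqref{Rezakhanlou}
and ergodicity of $Q$) gives $m^\rho_{tN,\eps}\to\nu^{Q,\rho}$ almost surely,
and similarly $m^\lambda_{tN,\eps}\to\nu^{Q,\lambda}$. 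Passing to the limit
yields $\int f\,d\nu^{Q,\lambda}\le\int f\,d\mu\le\int f\,d\nu^{Q,\rho}$ for
every such $f$, which by Lemma \ref{lemma_conditional} is precisely
$\nu^{Q,\lambda}\ll\mu\ll\nu^{Q,\rho}$. The delicate point is this last step:
one must invoke the joint space-time ergodic theorem simultaneously in $\alpha$
and $\eta$, controlling the shift by $x^N_s(\omega^+)$, so that the extremal
empirical measures really converge to $\nu^{Q,\lambda}$ and $\nu^{Q,\rho}$
$\overline\nu^{Q,\lambda,\rho}\otimes\Prob\otimes\Prob^+$-almost surely.
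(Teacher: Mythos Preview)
Your overall decomposition---translation invariance from the spatial averaging, invariance under $\mathfrak L$ from the time averaging, $\alpha$-marginal equal to $Q$, and the stochastic sandwich $\nu^{Q,\lambda}\ll\mu\ll\nu^{Q,\rho}$ from attractiveness together with a space-time ergodic theorem for the equilibrium processes---is the same architecture the paper uses. The sandwich argument in particular (coupling $T(\eta^\lambda,\eta^\rho)$ between $\eta^\lambda$ and $\eta^\rho$ via \eqref{attractive_1}, then applying Proposition \ref{prop_ergo} to the extremal equilibria $\nu^{Q,\lambda},\nu^{Q,\rho}\in(\mathcal I_{L^v}\cap\mathcal S)_e$) is essentially the paper's.

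There is, however, a genuine gap in your argument for $\mathfrak L$-invariance. Dynkin's formula plus an $L^2$ martingale bound gives only $\Exp\bigl|\int L^v f\,dm_{tN,\eps}\bigr|^2=O(1/N)$, hence tail probabilities of order $1/N$, which are not summable. Your own phrase ``almost surely along suitable subsequences'' concedes the point: the lemma requires that \emph{every} subsequential limit lie in $\mathcal M^Q_{\lambda,\rho}$, and since there are uncountably many subsequences you cannot assemble the subsequence-dependent almost-sure events into a single one. The paper closes this gap by invoking the uniform large deviation upper bound of Lemma \ref{deviation_empirical}: for any closed $F\subset\mathcal P(\mathbf A\times\mathbf X)$ disjoint from $\mathcal I_{L^v}=\mathcal D_v^{-1}(0)$ one has $\sup_\nu{\bf P}^v_\nu(\pi_{Nt,\eps}\in F)\le e^{-cN}$, which is summable, so Borel--Cantelli forces every almost-sure subsequential limit into $\mathcal I_{L^v}$. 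The uniformity over the initial law $\nu$ is essential, because the process defining $m_{tN,\eps}(\theta'_{\lfloor bN\rfloor,aN}\omega',\omega^+)$ is started from the $N$-dependent random state $\tau_{\lfloor bN\rfloor}\eta_{aN}(\alpha,T(\eta,\xi),\omega)$.

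A smaller error: in your $\alpha$-marginal argument you assert that $x^N_s=O(s)$ is ``negligible against the spatial range'', but $s$ ranges over $[0,tN]$, so $x^N_s$ is of order $|v|tN$, comparable to (and typically larger than) $\eps N$. The $\alpha$-marginal being $Q$ does not follow from spatial averaging alone; in the paper's route it is a consequence of the sandwich, since by Lemma \ref{lemma_conditional} any $\mu$ with $\nu^{Q,\lambda}\ll\mu\ll\nu^{Q,\rho}$ necessarily shares their common $\alpha$-marginal $Q$.
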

 Lemma \ref{current_comparison} is a consequence of the monotonicity property \eqref{attractive_1}.
Lemma \ref{lemma_empirical} relies in addition
on a space-time ergodic theorem and on a general uniform large deviation 
upper bound for space-time empirical measures of Markov processes. 
We state these two results before proving Proposition \ref{proposition_2_2}.
\begin{proposition} (\cite[Proposition 2.3]{bgrs3}).
\label{prop_ergo} Let $(\eta_t)_{t\ge 0}$ be a Feller process on
$\mathbf X$ with  a translation invariant generator $L$,  that is
\be\label{translation_gen}\tau_1 L\tau_{-1}=L\ee
Assume further that
\[
\mu\in({\mathcal I}_L\cap{\mathcal S})_e\]
where ${\mathcal I}_L$ denotes the set of invariant measures for
$L$. Then, for any local function $f$ on $\mathbf X$, and any $a>0$
\be \label{ergo}\lim_{\ell\to\infty}\frac{1}{a\ell^2}\int_0^{a\ell}
\sum_{i=0}^\ell\tau_i f(\eta_t)dt=\int f
d\mu=\lim_{\ell\to\infty}\frac{1}{a\ell^2}\int_0^{a\ell}
\sum_{i=-\ell}^{-1}\tau_i f(\eta_t)dt\ee
a.s. with respect to the law of the process with initial
distribution $\mu$.
\end{proposition}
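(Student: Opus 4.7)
The plan is to deduce Proposition \ref{prop_ergo} from the multi-parameter pointwise ergodic theorem applied to the commuting measure-preserving $\R^+\times\Z$-action $(s,i)\mapsto(\theta_s,\tau_i)$ on the Skorokhod path space $D([0,\infty),\mathbf X)$ equipped with the path measure $\mathbf P^\mu$ of the Markov process started from $\mu$. First I would verify invariance of $\mathbf P^\mu$ under both actions, which commute pointwise on paths: time-shift invariance is stationarity from $\mu\in\mathcal I_L$, while spatial-shift invariance follows from $\mu\in\mathcal S$ together with \eqref{translation_gen}, which ensures that $e^{tL}$ commutes with $\tau_1$ and hence that the pointwise spatial shift of paths preserves $\mathbf P^\mu$. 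The core input is then joint ergodicity of this action under $\mathbf P^\mu$.

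For joint ergodicity I would use an ergodic-decomposition argument. Suppose $\mathbf P^\mu=\lambda\mathbf P_1+(1-\lambda)\mathbf P_2$ with each $\mathbf P_j$ joint-invariant and $0<\lambda<1$. Time-shift invariance together with the Markov property forces each $\mathbf P_j$ to coincide with the path measure started from its own time-$0$ marginal $\mu_j$, and that marginal satisfies $\mu_j\in\mathcal I_L$. Spatial invariance of $\mathbf P_j$ read off at time $0$ gives $\mu_j\in\mathcal S$. Hence $\mu=\lambda\mu_1+(1-\lambda)\mu_2$ with $\mu_j\in\mathcal I_L\cap\mathcal S$, and extremality of $\mu$ forces $\mu_1=\mu_2=\mu$, so $\mathbf P_1=\mathbf P_2=\mathbf P^\mu$, contradicting the nontriviality of the decomposition.

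Then I would apply the multi-parameter Wiener--Tempelman ergodic theorem along the F\o lner family of rectangles $R_\ell:=[0,a\ell]\times\{0,1,\dots,\ell\}$. With the lift $F(\omega):=f(\omega(0))\in L^\infty(\mathbf P^\mu)$, joint ergodicity yields
$$\frac{1}{|R_\ell|}\int_0^{a\ell}\sum_{i=0}^\ell F(\theta_s\tau_i\omega)\,ds\longrightarrow\int F\,d\mathbf P^\mu=\int f\,d\mu\qquad\mathbf P^\mu\text{-a.s.}$$
Since $|R_\ell|=a\ell(\ell+1)\sim a\ell^2$ and $F(\theta_s\tau_i\omega)=(\tau_if)(\eta_s)$, this is exactly the first limit in the statement; the second limit follows by the same argument with the equally regular F\o lner family $R_\ell':=[0,a\ell]\times\{-\ell,\dots,-1\}$. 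The main obstacle is the joint-ergodicity step, since extremality of $\mu$ is only postulated in $\mathcal I_L\cap\mathcal S$, which is strictly weaker than extremality in $\mathcal I_L$ or in $\mathcal S$ alone: it is the Markov-property identification $\mathbf P_j=\mathbf P^{\mu_j}$ that allows one to transport the ergodic decomposition back to $\mathcal P(\mathbf X)$ and exploit the hypothesis in its natural form.
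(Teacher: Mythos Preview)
The paper does not actually prove this proposition: it is merely stated (with a citation to \cite[Proposition~2.3]{bgrs3}) as one of the two inputs needed for the proof of Proposition~\ref{proposition_2_2}, so there is no in-paper argument to compare against. Your approach is the standard one and is essentially the argument carried out in \cite{bgrs3}: view the stationary process as an $\R^+\times\Z$-dynamical system on path space, reduce the statement to the multiparameter (Wiener--Tempelman) ergodic theorem over the rectangles $[0,a\ell]\times\{0,\dots,\ell\}$ and $[0,a\ell]\times\{-\ell,\dots,-1\}$, and identify the limit as $\int f\,d\mu$ by proving joint ergodicity of $\mathbf P^\mu$ under the combined space--time shift.

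The one place where your write-up is compressed to the point of hiding a genuine argument is the sentence ``Time-shift invariance together with the Markov property forces each $\mathbf P_j$ to coincide with the path measure started from its own time-$0$ marginal $\mu_j$.'' This is true, but it is exactly the crux, and it does \emph{not} follow just from the Markov property of $\mathbf P^\mu$: a priori a time-shift invariant component of a Markov measure need not be Markov. What makes it work is the following $L^2$ argument (which you should make explicit): writing $\Phi_j=d\mathbf P_j/d\mathbf P^\mu$, the function $\Phi_j$ is $\theta_t$-invariant, hence $\sigma(\eta_s:s\ge t)$-measurable for every $t$, so by the Markov property $\mathbf E^\mu[\Phi_j\mid\mathcal F_t]=\phi_j(\eta_t)$ with $\phi_j(\eta):=\mathbf E^\eta[\Phi_j]$. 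This is a bounded martingale converging to $\Phi_j$, yet by stationarity $\mathbf E^\mu[\phi_j(\eta_t)^2]$ is constant in $t$; a martingale with constant second moment is a.s.\ constant, so $\Phi_j=\phi_j(\eta_0)$ a.s. This gives $\mathbf P_j=\mathbf P^{\mu_j}$ with $d\mu_j/d\mu=\phi_j$, and the same computation shows $\phi_j(\eta_t)=\phi_j(\eta_0)$ a.s., which yields $\mu_j\in\mathcal I_L$. With this step filled in, the rest of your outline (spatial invariance of $\mu_j$ from the $\tau$-invariance of $\Phi_j$, extremality of $\mu$ forcing $\mu_1=\mu_2=\mu$, and the application of the multiparameter ergodic theorem along regular F{\o}lner rectangles) is correct.
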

\begin{lemma}\label{deviation_empirical} (\cite[Lemma 3.4]{bgrs3})
Let ${\bf P}_\nu^v$ denote the law of a Markov process $(\widetilde\alpha_.,\widetilde\xi_.)$ with
generator $L^v$ and initial distribution $\nu$. For $\eps>0$, let
\be \label{def_empirical-gen}
\pi_{t,\eps}:=\abs{\Z\cap[-\eps t,\eps
t]}^{-1}\sum_{x\in\Z\cap[-\eps t,\eps t]}t^{-1}\int_0^t\delta_{(\tau_x\widetilde\alpha_s,\tau_x\widetilde\xi_s)}ds
\ee
Then, there exists a functional ${\mathcal D}_v$ which is
nonnegative, l.s.c., and satisfies
${\mathcal D}_v^{-1}(0)={\mathcal I}_{L^v}$, such that,
for every closed subset $F$ of ${\mathcal P}({\mathbf A}\times\bf
X)$,
\be \label{ld} \limsup_{t\to\infty}t^{-1}\log\sup_{\nu\in\mathcal P({\bf
A}\times{\mathbf X})}{\bf
P}_\nu^v\left(\pi_{t,\eps}(\widetilde\xi_.)\in F\right)\leq
-\inf_{\mu\in F}{\mathcal D}_v(\mu) \ee
\end{lemma}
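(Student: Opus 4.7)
The plan is to prove this as a uniform Donsker--Varadhan style upper bound for the joint space--time empirical measure, exploiting translation invariance of the generator $L^v = \mathfrak{L}+S^v$ to couple spatial and temporal averages.

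I would first define the candidate rate function by the Feynman--Kac variational formula
\[
\mathcal{D}_v(\mu) := \sup_{f} \int V_f \, d\mu, \qquad V_f := -e^{-f} L^v e^f,
\]
where the supremum runs over a sufficiently rich class of bounded local $f$ on ${\mathbf A}\times{\mathbf X}$ such that $e^f$ lies in the domain of $L^v$. As a supremum of weakly continuous affine functionals of $\mu$, $\mathcal{D}_v$ is nonnegative and lower semicontinuous. The identification $\mathcal{D}_v^{-1}(0)=\mathcal{I}_{L^v}$ is the standard Donsker--Varadhan computation: if $\mathcal{D}_v(\mu)=0$, perturbing $f\equiv 0$ by $\varepsilon g$ and differentiating in $\varepsilon$ yields $\int L^v g \, d\mu=0$ for every local $g$, hence $\mu \in \mathcal{I}_{L^v}$; the reverse inclusion follows by Jensen's inequality applied to $e^{-f}$ under the invariant measure.

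The key probabilistic input is the Feynman--Kac exponential martingale
\[
M_t^f := \exp\!\left(f(\widetilde\alpha_t,\widetilde\xi_t)-f(\widetilde\alpha_0,\widetilde\xi_0)+\int_0^t V_f(\widetilde\alpha_s,\widetilde\xi_s)\,ds\right),
\]
a mean-one nonnegative ${\bf P}_\nu^v$-martingale for bounded $f$, which immediately yields the uniform Laplace bound
\[
{\bf E}_\nu^v\!\left[\exp\!\left(\int_0^t V_f(\widetilde\alpha_s,\widetilde\xi_s)\,ds\right)\right] \leq e^{2\|f\|_\infty}, \qquad \text{uniformly in } \nu.
\]
Translation invariance of $L^v$ (both $\mathfrak{L}$ and the deterministic shift $S^v$ commute with $\tau_x$) ensures that $(\tau_x\widetilde\alpha_s,\tau_x\widetilde\xi_s)_{s\geq 0}$ is again a Markov process with generator $L^v$, so applying the bound to an $x$-shifted version of $f$ gives the same estimate for $\int_0^t V_f\circ\tau_x\,ds$, with the same constant, uniformly in $x \in \Z$ and $\nu$. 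Jensen's inequality then performs the spatial averaging: with $\Lambda_t:=\Z\cap[-\varepsilon t,\varepsilon t]$,
\[
\exp\!\left(t\int V_f\, d\pi_{t,\varepsilon}\right) \leq \frac{1}{|\Lambda_t|}\sum_{x\in\Lambda_t}\exp\!\left(\int_0^t V_f\circ\tau_x\,ds\right),
\]
so that $\sup_\nu {\bf E}_\nu^v[\exp(t\int V_f\,d\pi_{t,\varepsilon})]\leq e^{2\|f\|_\infty}$. A Chebyshev inequality then gives $\sup_\nu {\bf P}_\nu^v(\int V_f\, d\pi_{t,\varepsilon} \geq \lambda) \leq e^{-t\lambda + 2\|f\|_\infty}$, and the extension from half-spaces to a general closed $F \subset \mathcal{P}({\mathbf A}\times{\mathbf X})$ is by the standard finite-covering argument based on the lower semicontinuity of $\mathcal{D}_v$ and the weak compactness of its sublevel sets.

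The main obstacle I anticipate is the combination of the Jensen averaging step with uniformity in the initial distribution $\nu$: losing either would weaken the bound below what the downstream application in Lemma \ref{lemma_empirical} requires, since there the initial distribution of each $x$-translated process is effectively different. What makes the scheme work is precisely translation invariance of $L^v$, since each translated trajectory can be re-encoded as a fresh Markov process with the same generator and a new initial law, so the exponential moment bound persists under the convex combination. A secondary technical point is exhibiting enough admissible tilts $f$ for $\mathcal{D}_v$ to truly detect invariance against $\mathfrak{L}+S^v$, as the non-local deterministic drift $S^v$ must be compatible with the local tilts.
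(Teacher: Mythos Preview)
The paper does not give a proof of this lemma: it is stated as \cite[Lemma 3.4]{bgrs3} and used as a black box in the proof of Lemma~\ref{lemma_empirical}. So there is no proof in the present paper to compare against.

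Your proposal is the standard Donsker--Varadhan route, and it is essentially correct; it is also the approach taken in \cite{bgrs3}. A couple of small comments. First, in the covering step you invoke ``weak compactness of the sublevel sets of $\mathcal D_v$''; in the present setting this is more than you need and slightly beside the point. Since $K<+\infty$ and $\mathbf A$ is compact, the whole space $\mathcal P(\mathbf A\times\mathbf X)$ is weakly compact, so any closed $F$ is compact and the finite subcover is automatic. Second, your worry about $S^v$ being ``non-local'' is unfounded: $S^v$ is a bounded jump generator given by a single deterministic shift, so for local $f$ the function $e^{-f}S^v e^f$ is again local and bounded, and the class of local tilts is closed under the operations you need. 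The Jensen step combined with translation invariance \eqref{eq:L-transl-inv} of $L^v$ is indeed the crux for obtaining the bound uniformly in the initial law $\nu$, and you have identified it correctly.
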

\begin{proof}{Proposition}{proposition_2_2}
We will show that
\beq \liminf_{N\to\infty}\,(Nt)^{-1}\phi^{v}_{tN}\circ\theta'_{\lfloor b N\rfloor ,a
N}(\omega') & \geq & \mathcal G_v(\lambda,\rho),\quad
\overline{\nu}^{Q,\lambda,\rho}\otimes\Prob\mbox{-a.s.}\label{liminf_better}\\
\label{limsup}
\limsup_{N\to\infty}\,(Nt)^{-1}\phi^{v}_{tN}\circ\theta'_{\lfloor b N\rfloor ,a
N}(\omega') & \leq & \mathcal G_v(\lambda,\rho),\quad \overline{\nu}^{Q,\lambda,\rho}\otimes\Prob\mbox{-a.s.}
\eeq
{\em Step one: proof of \eqref{liminf_better}.}\par\noindent
  Setting
$\varpi_{a N}=\varpi_{a
N}(\omega'):=T\left( \tau_{\lfloor b N\rfloor }\eta_{a
N}(\alpha,\eta,\omega), \tau_{\lfloor b N\rfloor }\eta_{a N}(\alpha,\xi,\omega)
 \right)$,
we have
\be \label{sothat} (Nt)^{-1}\phi^v_{tN}\circ\theta'_{\lfloor b N\rfloor ,a
N}(\omega')=(Nt)^{-1}\varphi^{v}_{tN}(\tau_{\lfloor b
N\rfloor }\alpha,\varpi_{a N},\theta_{\lfloor b
N\rfloor ,a N}\omega) \ee
Let,   for every $(\alpha,\zeta,\omega,\omega^+)\in{\mathbf
A}\times{\mathbf X}\times\Omega\times\Omega^+$ and $x^N_.(\omega^+)$
given by \eqref{def_poisson},
\be\label{psi-t-v}
\psi_{tN}^{v,\eps}(\alpha,\zeta,\omega,\omega^+):=\abs{\Z\cap[-\eps
N,\eps N]}^{-1}\sum_{y\in\Z:\,\abs{y}\leq\eps
N}\varphi^{x^N_.(\omega^+)+y}_{tN}(\alpha,\zeta,\omega) \ee
Note that  $\lim_{N\to\infty}(Nt)^{-1}x_{tN}^N(\omega^+)= v$,  $\Prob^+$-a.s., and that for
two paths $y_.,z_.$  (see \eqref{current_3}),
\[
\abs{\varphi^{y_.}_{tN}(\alpha,\eta_0,\omega)-\varphi^{z_.}_{tN}(\alpha,\eta_0,\omega)} \leq
K\left(\abs{y_{tN}-z_{tN}}+ \abs{y_0-z_0}\right)
\]
Hence the proof of \eqref{liminf_better} reduces to that of the same
inequality where we replace $(Nt)^{-1}\phi^{v}_{tN}\circ\theta'_{\lfloor b
N\rfloor ,a N}(\omega')$ by
$ (Nt)^{-1}\psi^{v,\eps}_{tN}(\tau_{\lfloor b N\rfloor }\alpha,\varpi_{a
N},\theta_{\lfloor b N\rfloor ,a N}\omega,\omega^+)$ 
and $\overline{\nu}^{Q,\lambda,\rho}\otimes\Prob$ by
$\overline{\nu}^{Q,\lambda,\rho}\otimes\Prob\otimes\Prob^+$.
By definitions \eqref{def_f}, \eqref{current_3} of flux and current,
for any $\alpha\in\mathbf{A}$, $\zeta\in\mathbf{X}$,
\begin{eqnarray*} &&M^{x,v}_{tN}(\alpha,\zeta,\omega,\omega^+):=
\varphi^{x^N_.(\omega^+)+x}_{tN}(\alpha,\zeta,\omega)-\nonumber\\
&& \int_0^{tN} \tau_x\left\{
j(\widetilde{\alpha}^N_{s}(\alpha,\omega^+),\widetilde{\eta}^N_{s}(\alpha,\zeta,\omega,\omega^+))
-v (\widetilde{\eta}^N_{s}(\alpha,\zeta,\omega,\omega^+))({\bf
1}_{\{v>0\}})\right\}ds\label{martingale}
\end{eqnarray*}
is a mean $0$ martingale under $\Prob\otimes\Prob^+$.  Let
\beq\nonumber R_{tN}^{\eps,v}&:=&\label{as_2} \left(Nt\abs{\Z\cap[-\eps
N,\eps N]}\right)^{-1}\sum_{x\in\Z:\,\abs{x}\leq\eps N}M^{x,v}_{tN}(
\tau_{\lfloor b N\rfloor }\alpha,\varpi_{a N},\theta_{\lfloor b
N\rfloor ,a N}\omega,\omega^+)\\\nonumber
&=&(Nt)^{-1}\psi^{v,\eps}_{tN}(\tau_{\lfloor b N\rfloor }\alpha,\varpi_{a
N},\theta_{\lfloor b N\rfloor ,a N}\omega,\omega^+)\\
&&-\int [j(\alpha,\eta)-v\eta({\bf
1}_{\{v>0\}})]m_{tN,\eps}(\theta'_{\lfloor b N\rfloor ,a
N}\omega',\omega^+)(d\alpha,d\eta)
\label{replace_oncemore} \eeq
where the last equality comes from \eqref{def_empirical_2},
\eqref{psi-t-v}.
 The exponential martingale associated with $M^{x,v}_{tN}$ 
yields a Poissonian bound, uniform in $(\alpha,\zeta)$, for the
exponential moment of $M_{tN}^{x,v}$ with respect to
$\Prob\otimes\Prob^+$. Since $\varpi_{a N}$ is independent of
$(\theta_{\lfloor b N\rfloor ,a N}\omega,\omega^+)$ under
$\overline{\nu}^{Q,\lambda,\rho}\otimes\Prob\otimes\Prob^+$, the bound
is also valid under this measure, and Borel-Cantelli's lemma implies
$\lim_{N\to\infty}R_{tN}^{\eps,v}=0$. 
{}From \eqref{replace_oncemore}, Lemma \ref{lemma_empirical} and
Corollary \ref{corollary_invariant}, \textit{(ii)} imply
\eqref{liminf_better}, as well as
 \be \limsup_{N\to\infty}\,(Nt)^{-1}\phi^{v}_{tN}\circ\theta'_{\lfloor b N\rfloor ,a
N}(\omega')  \leq  \sup_{r\in[\lambda,\rho]\cap{\mathcal R}^Q}
[G^Q(r)-v r],\quad
\overline{\nu}^{Q,\lambda,\rho}\otimes\Prob\mbox{-a.s.}\label{liminf_better_2}\ee 
{\em Step two: proof of \eqref{limsup}.}
Let  $r\in[\lambda,\rho]\cap{\mathcal R}^Q$.  We define
$\overline{\nu}^{Q,\lambda,r,\rho}$ as the distribution of
$(\alpha,\eta^\lambda,\eta^r,\eta^\rho)$.
By \eqref{liminf_better} and \eqref{liminf_better_2}, 
\[
\lim_{N\to\infty}\,(Nt)^{-1}\phi^{v}_{tN}\circ\theta'_{\lfloor b N\rfloor ,a
N}(\alpha,\eta^r,\eta^r,\omega)  =   G^Q(r)-vr \]
By Lemma \ref{current_comparison},
\[
\phi^v_{tN}\circ\theta'_{\lfloor bN\rfloor ,aN}(\omega')
\leq\phi^v_{tN}\circ\theta'_{\lfloor bN\rfloor ,aN}(\alpha,\eta^r,\eta^r,\omega)\]
 The result follows by continuity of  $G^Q$  and minimizing over $r$.
\end{proof}
\subsection{Cauchy problem}
\label{Cauchy}
 Using \eqref{deltastab} and the fact that an arbitrary 
function can be approximated by a  ${\mathcal R}^Q$-valued  function 
with respect to the distance $\Delta$ defined by \eqref{def_delta}, 
the proof of Theorem \ref{th:hydro} for general initial data $u_0$ 
can be reduced (see \cite{bgrs2}) to the case of  ${\mathcal R}^Q$-valued  initial data
by coupling and approximation arguments (see  \cite[Section 4.2.2]{bgrs3}).
\begin{proposition}\label{hydro_finite}  (\cite[Proposition 4.3]{bgrs4}). 
Assume $(\eta^N_0)$ is a sequence of configurations such that:
(i) there exists $C>0$  such that  for all $N\in\N$, $\eta^N_0$ is
supported on $\Z\cap[-CN,CN]$; \\
(ii) $\pi^N(\eta^N_0)\to u_0(.)dx$ as $N\to\infty$, where $u_0$
 has compact support,  is a.e.  ${\mathcal R}^Q$-valued  and
has finite space variation. \\
Let $u(.,t)$ denote the unique entropy solution to \eqref{hydrodynamics} with Cauchy datum $u_0(.)$.
Then,  $Q\otimes\Prob$-a.s. as $N\to\infty$,
\[
\Delta^N(t):=\Delta(\pi^N(\eta^N_{Nt}(\alpha,\eta^N_0,\omega)),u(.,t)dx)
\]
converges uniformly to $0$ on $[0,T]$ for every $T>0$.
\end{proposition}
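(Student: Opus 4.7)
I would mimic the PDE-level scheme of Theorem \ref{th_glimm} at the microscopic level, using as building blocks the Riemann hydrodynamic limit already established in Proposition \ref{corollary_2_2}. The guiding principle is that errors are measured in the distance $\Delta$ of \eqref{def_delta}, which, thanks to the microscopic macroscopic-stability property (the particle-level analogue of Proposition \ref{standard}(v), coming from \cite{mrs}), does not amplify along the evolution. Hence the global error is controlled by the sum over discretization steps of the approximation error, each of which can be made small uniformly in time by Lemma \ref{cor_approx}.

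The first step is to approximate the initial profile. Fix $T>0$ and $\varepsilon>0$, and apply Lemma \ref{cor_approx} along a subsequence $\varepsilon_n \downarrow 0$ to get $\mathcal R^Q$-valued piecewise constant profiles $u_0^\varepsilon = \sum_k \rho_k \mathbf 1_{(x_k,x_{k+1})}$ with step lengths $\geq \varepsilon$, with $\Delta(u(\cdot,t),u^\varepsilon(\cdot,t)) \leq \varepsilon\,\delta_\varepsilon(t) \to 0$ uniformly on $[0,T]$, where $u^\varepsilon$ is the entropy solution from $u_0^\varepsilon$. At microscopic level, using Lemma \ref{big_coupling} I couple $\eta^N_0$ with a configuration $\widetilde\eta^{N,\varepsilon}_0$ that on each block $[\lfloor Nx_k\rfloor,\lfloor Nx_{k+1}\rfloor)$ is distributed as $\nu^{Q,\rho_k}_\alpha$, constructed by translating the coupling of Lemma \ref{big_coupling} via the extended shift $\theta'_{\lfloor Nx_k\rfloor,0}$. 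The vague convergence of $\pi^N(\eta^N_0)$ to $u_0 dx$ together with ergodicity of $\nu^{Q,\rho_k}$ yields $\Delta(\pi^N(\eta^N_0),\pi^N(\widetilde\eta^{N,\varepsilon}_0)) \to 0$ up to $\varepsilon$-order.

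The second step is a Glimm-type iteration in time. Choose $\Delta t$ satisfying the microscopic CFL condition $\Delta t < \varepsilon/(2V)$ so that, by the microscopic finite-propagation property (the particle-level analogue of Proposition \ref{standard}(iv), also from \cite{mrs}), the Riemann fans emanating from adjacent jump locations $Nx_k$ do not interact over one time step $[N t_j, N t_{j+1}]$. On each block, Proposition \ref{corollary_2_2} (applied with $(x_0,s_0)=(x_k, t_j)$) gives that $\pi^N(\eta^N_{Nt}(\alpha,\widetilde\eta^{N,\varepsilon,j},\omega))$ at time $t_{j+1}$ is close (in $\Delta$) to the true Riemann solution $R_{\rho_{k-1},\rho_k}(\cdot,\Delta t)$, hence to $u^\varepsilon(\cdot,t_{j+1})$. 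I then re-approximate the resulting configuration by a new piecewise-constant $\mathcal R^Q$-valued reference $\widetilde\eta^{N,\varepsilon,j+1}$, invoking Lemma \ref{cor_approx} again (the PDE solution retains finite variation, Proposition \ref{standard}(iii)), and iterate. Microscopic macroscopic stability propagates the $\Delta$-closeness from $t_j$ to $t_{j+1}$, so errors merely add across the $O(T/\Delta t)$ iterations.

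The main obstacle is the quantitative error control: to iterate and conclude almost-sure convergence by Borel--Cantelli, I need the failure probability at each approximation/evolution step to decay exponentially in $N$, and this has to be uniform in the reference configuration used at that step. The key ingredient here is an exponential tail estimate for the deviation of the current from its expected value, coming from the martingale argument in the proof of Proposition \ref{proposition_2_2} (the Poissonian bound on $M_{tN}^{x,v}$), combined with the uniform large-deviation upper bound of Lemma \ref{deviation_empirical}; both are already uniform in the initial disorder-particle distribution. A secondary subtlety is the quenched nature of the statement: one must restrict to a $Q$-full-measure subset $\mathbf A'$ obtained as a countable intersection over a dense set of rational parameters $(\lambda,\rho)\in\mathcal R^Q$, scales $(x_0,s_0)\in\mathbb Q\times\mathbb Q^+$, and approximation parameters $\varepsilon_n$, extending to arbitrary values by monotonicity of currents (Lemma \ref{current_comparison}) and continuity of $G^Q$. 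Letting $N\to\infty$, then $\Delta t\to 0$, then $\varepsilon\to 0$, and invoking uniform convergence in Lemma \ref{cor_approx} delivers the conclusion.
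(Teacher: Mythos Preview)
Your proposal is correct and follows essentially the same route as the paper: time discretization with step $\varepsilon'=\varepsilon/(2V)$, piecewise-constant ${\mathcal R}^Q$-valued approximation of $u(\cdot,t_k)$ via Lemma~\ref{cor_approx}, microscopic reference configurations built from the coupling of Lemma~\ref{big_coupling} (evolved to time $Nt_k$ so as to share the Poisson clocks with $\eta^N$), Riemann hydrodynamics plus finite propagation on each block, and macroscopic stability to control the increment $\Delta^N(t_{k+1})-\Delta^N(t_k)$, with the errors summing to $O(\delta(\varepsilon)\,T)$ over the $O(T/\varepsilon)$ steps.

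One small correction of emphasis: the ``main obstacle'' you describe is handled more simply than you suggest. The paper does \emph{not} re-enter the martingale/large-deviation machinery of Proposition~\ref{proposition_2_2} and Lemma~\ref{deviation_empirical} at the Cauchy level; Proposition~\ref{corollary_2_2} is invoked as an almost-sure black box at the (countably many) fixed parameters $(r_{k,l},x_{k,l},t_k,\varepsilon_n)$ arising from the scheme, so no density/monotonicity argument over rational $(\lambda,\rho,x_0,s_0)$ is needed here. The only quantitative bound used for Borel--Cantelli in this proof is the exponential estimate in Proposition~\ref{prop:macro-stab} (macroscopic stability), together with the summable failure probability from Lemma~\ref{finite_prop}.
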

 Before proving this proposition, we state two crucial tools 
in their most complete form (see \cite{mrs}),
the macroscopic stability and the finite propagation property for the particle system. 
Macroscopic stability yields that  the distance 
$\Delta$ defined in \eqref{def_delta}  is an ``almost'' nonincreasing functional for two coupled
particle systems.  It is thus a microscopic analogue of property \eqref{deltastab} 
in Proposition \ref{standard}.  The finite propagation property is a microscopic
analogue of Proposition \ref{standard}, \textit{iv)}.  For the misanthrope's process, it follows essentially
from the finite mean assumption (M4) on page \pageref{assumptions_M}. 
\begin{proposition}
\label{prop:macro-stab} (\cite[Proposition 4.2]{bgrs3} with \cite[Theorem 2]{mrs})
Assume $p(.)$ has a finite first moment and a positive mean. 
Then there exist constants $C>0$ and
$c>0$, depending only on $b(.,.)$ and $p(.)$, such that the
following holds. For every  $N\in\N$, $(\eta_0,\xi_0)\in{\mathbf X}^2$
with
$\abs{\eta_0}+\abs{\xi_0}:=\sum_{x\in\Z}[\eta_0(x)+\xi_0(x)]<+\infty$,
and every $\gamma>0$, the event
\be\label{bmevent} \forall t>0:
\,\Delta(\pi^N(\eta_t(\alpha,\eta_0,\omega)),\pi^N(\eta_t(\alpha,\xi_0,\omega)))
\leq \Delta(\pi^N(\alpha,\eta_0),\pi^N(\alpha,\xi_0))+\gamma
\ee
has $\Prob$-probability at least
$1-C(\abs{\eta_0}+\abs{\xi_0})e^{-cN\gamma}$.
\end{proposition}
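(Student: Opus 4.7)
The plan is to use the graphical construction of Subsection \ref{subsec:graphical} to reduce macroscopic stability to a Poissonian displacement estimate on ``discrepancy particles'', following the strategy of \cite{bm}.

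First, I would couple $\eta_t:=\eta_t(\alpha,\eta_0,\omega)$ and $\xi_t:=\eta_t(\alpha,\xi_0,\omega)$ via the same Poisson measure $\omega$. Thanks to the complete monotonicity property \eqref{attractive_1}, one can simultaneously build $\eta_t\vee\xi_t$ and $\eta_t\wedge\xi_t$ on the same probability space, yielding two auxiliary nonnegative configurations $\zeta_t^+:=(\eta_t\vee\xi_t)-\xi_t$ and $\zeta_t^-:=\xi_t-(\eta_t\wedge\xi_t)$, which encode the positive and negative discrepancies between $\eta_t$ and $\xi_t$. These discrepancies are transported by the Poissonian clocks but can only be destroyed, never created, through pairwise annihilation when a positive discrepancy meets a negative one. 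In particular, the total number of discrepancies is bounded by $\abs{\eta_0}+\abs{\xi_0}$ for all $t\ge 0$.

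Next I would reformulate $\Delta$ as a cumulative imbalance. Setting $\Phi_t(x):=\sum_{y\le x}[\eta_t(y)-\xi_t(y)]$, one has $N\Delta(\pi^N(\eta_t),\pi^N(\xi_t))=\sup_{x\in\Z}\abs{\Phi_t(x)}$, and the increment $\Phi_t(x)-\Phi_0(x)$ equals the net signed current of discrepancies that have crossed the bond $(x,x+1)$ between times $0$ and $t$. Hence, to show $\Delta(\pi^N(\eta_t),\pi^N(\xi_t))\le\Delta(\pi^N(\eta_0),\pi^N(\xi_0))+\gamma$, it suffices to show that with the required probability no labelled discrepancy ever travels farther than (say) $N\gamma/2$ from its initial position, uniformly over $t>0$: such a bound controls the increment of $\Phi_t(x)$ at every $x$ simultaneously.

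The technical core is thus a Poissonian displacement bound for one labelled discrepancy. Each such particle jumps at rates bounded uniformly by $||b||_\infty p(\cdot)$, which by assumption \textit{(M4)} on page \pageref{assumptions_M} has finite first moment and hence bounded mean drift. Standard exponential martingale (Cramér--Chebyshev) estimates then yield, for any initial site $y$, a bound of the form $C'e^{-cN\gamma}$ on the probability that such a walk ever exits $[y-N\gamma/2,\,y+N\gamma/2]$ throughout $[0,+\infty)$; the supremum in time is controlled by Doob's maximal inequality applied to the underlying exponential martingale. A union bound over the at most $\abs{\eta_0}+\abs{\xi_0}$ labelled discrepancies produces the claimed factor $C(\abs{\eta_0}+\abs{\xi_0})e^{-cN\gamma}$.

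The main obstacle will be the labelling step: positive and negative discrepancies interact nontrivially through annihilation and ``swaps'' at $p$-driven jump events, so one must design a consistent rule assigning each discrepancy an identity that persists through every Poisson event, in such a way that its trajectory is stochastically dominated by an autonomous random walk with the aforementioned drift and rate bounds. This is essentially the device of \cite{bm}, extended in \cite{mrs} to kernels of arbitrary range with finite first moment and positive mean, which is exactly the hypothesis invoked here; the positive mean of $p$ is what prevents discrepancies from spreading faster than linearly and allows the exponential estimate to close.
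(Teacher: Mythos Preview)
Your reduction to a Poissonian displacement bound contains a fatal gap. You claim that, with probability at least $1-C'e^{-cN\gamma}$, a labelled discrepancy never exits $[y-N\gamma/2,\,y+N\gamma/2]$ for \emph{all} $t\in[0,+\infty)$. This is impossible: the discrepancy is driven by clocks with a nonzero mean drift (this is precisely the hypothesis on $p$), so it almost surely leaves any bounded interval in finite time. No Cram\'er--Chebyshev or Doob estimate can produce uniform-in-time confinement for a walk with nonzero drift; at best you would control $\sup_{t\leq T}$ for fixed $T$, with bounds that degrade in $T$, whereas \eqref{bmevent} is required for all $t>0$ simultaneously. Even the reduction itself is loose: bounding each discrepancy's displacement by $N\gamma/2$ does not by itself bound $\sup_x|\Phi_t(x)|-\sup_x|\Phi_0(x)|$ by $N\gamma$, since many discrepancies can cross a given bond.

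The mechanism in the paper (and in \cite{bm,mrs}) is \emph{coalescence}, not confinement. Discrepancies are allowed to travel arbitrarily far; what keeps $\Delta$ from growing is that opposite-sign discrepancies annihilate. The positive mean of $p$, together with the irreducibility assumption \eqref{irreducibility_misanthrope}, guarantees that a positive discrepancy will with high probability catch and cancel a nearby negative one before they can cross in a way that increases $\sup_x|\Phi_t(x)|$. Concretely, the proof orders and possibly relabels the discrepancies so as to favour such coalescences, and partitions space into ``windows'' in the spirit of \cite{bm}, obtaining exponential bounds on the number of windows in which coalescence fails; long jumps are handled separately via the first-moment assumption \textit{(M4)}. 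Your proposal invokes neither irreducibility nor annihilation, and misreads the role of the positive mean: it drives opposite-sign discrepancies together, it does not keep individual discrepancies near their starting points.
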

\begin{lemma}
\label{finite_prop} (\cite[Lemma 15]{mrs})
Assume $p(.)$ has a finite third moment. 
There exist constant $v$,
and function $A(.)$ (satisfying $\sum_n A(n) < \ \infty $), depending only on $b(.,.)$ and
$p(.)$, such that the following holds. For any $x,y\in\Z$, any
$(\eta_0,\xi_0)\in{\mathbf X}^2$, and any $0<t<(y-x)/(2v)$: if $\eta_0$
and $\xi_0$ coincide on the site interval $[x,y]$, then  with
$\Prob$-probability at least $1-A(t)$,
$\eta_s(\alpha,\eta_0,\omega)$ and $\eta_s(\alpha,\xi_0,\omega)$ coincide on the
site interval $[x+vt,y-vt]\cap\Z$ for every $s\in[0,t]$.
\end{lemma}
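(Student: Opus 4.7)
The plan is to use the graphical construction from Subsection \ref{subsec:graphical} to set up a percolation-type bound on the range of Poisson-mark influence. The key observation is that, under the common Poisson realization $\omega$, the coupled evolutions $\eta_s(\alpha,\eta_0,\omega)$ and $\eta_s(\alpha,\xi_0,\omega)$ can disagree at a site $w$ at time $s\le t$ only if a chain of Poisson marks transports information from an initial discrepancy, necessarily located outside $[x,y]$, into $w$. Finite propagation then reduces to a tail bound on how far such chains spread in time $t$.

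First, for each $w\in\Z$, I would define the backward influence cluster $\Lambda_t(w)\subset\Z$ inductively along the marks of $\omega$: set $\Lambda_0(w)=\{w\}$, and whenever a mark $(r,u,z,\cdot)$ with $r\le t$ satisfies $\{u,u+z\}\cap\Lambda_{r^-}(w)\neq\emptyset$, add both endpoints. By the update rule \eqref{update}, $\eta_s(w)$ depends deterministically only on $\eta_0|_{\Lambda_s(w)}$ and on the portion of $\omega$ touching $\Lambda_s(w)$. Hence, using monotonicity of $s\mapsto\Lambda_s(w)$, if $\Lambda_t(w)\subset[x,y]$ for every $w\in[x+vt,y-vt]\cap\Z$, then $\eta_s(\alpha,\eta_0,\omega)$ and $\eta_s(\alpha,\xi_0,\omega)$ coincide on that interval for all $s\le t$.

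Next, I would bound the lateral spread of $\Lambda_t(w)$. Writing $\rho^+_t(w):=(\max\Lambda_t(w)-w)\vee 0$, an increment $\rho^+_t(w)\to\rho^+_t(w)+\Delta$ requires a mark $(s,u,z,\cdot)$ with $u\le w+\rho^+_{s^-}$ and $u+z=w+\rho^+_{s^-}+\Delta$, hence $z\ge\Delta$. Summing Poisson intensities, the rate of $\Delta$-sized jumps is at most $\|b\|_\infty\sum_{z\ge\Delta}p(z)$, so $\rho^+_t(w)$ is stochastically dominated by a compound Poisson process $R_t$ with total rate $\|b\|_\infty\sum_{z\ge 1}zp(z)<\infty$ (by $(M4)$) and jump-size tail $\Prob(J\ge\Delta)\propto\sum_{z\ge\Delta}p(z)$. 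The identity $E[J^k]<\infty\iff\sum_z z^{k+1}p(z)<\infty$ shows that the finite third moment of $p$ yields square-integrable jumps; hence by the law of large numbers $R_t/t\to\bar v$ almost surely for some deterministic $\bar v$, and one fixes $v>\bar v$. To obtain a bound independent of $y-x$, rather than union-bounding over $w\in[x+vt,y-vt]$, I would observe that the bad event is contained in the union of two one-sided events: some mark chain starting from an initial discrepancy to the left of $x$ reaches depth $\ge vt$ into $[x,\infty)$, or symmetrically on the right at $y$. By translation invariance, each one-sided spread is again dominated by $R_t$.

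The main obstacle is producing a summable tail bound $\Prob(R_n>vn)=A(n)$ with $\sum_n A(n)<\infty$ from the finite third moment alone: a plain Chebyshev estimate applied to $R_t$ yields only $O(1/t)$, just barely not summable. The refinement in \cite[Lemma 15]{mrs} proceeds by truncating mark displacements at a threshold $r=r(t)$, applying an Azuma-type exponential martingale inequality to the contribution of moderate marks (super-polynomial decay), bounding the probability of any extreme mark of size $\ge r$ in time $t$ by $\|b\|_\infty t\cdot E_p[|X|^3]/r^3$ via Markov's inequality with the third moment, and then optimizing $r=r(t)$ so that the combined $A(t)$ decays as a summable polynomial. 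Once this tail estimate is secured, the lemma follows from the two preceding steps.
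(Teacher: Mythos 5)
The paper itself contains no proof of this statement: it is quoted from \cite[Lemma 15]{mrs}, so your attempt has to be judged on its own merits rather than against an internal argument. Your first three paragraphs set up what is certainly the right skeleton: the reduction to the spread of the backward influence clusters of the two half-lines $(-\infty,x)$ and $(y,+\infty)$, and the stochastic domination of the displacement of a cluster edge by a compound Poisson process. Your moment bookkeeping is also essentially correct, up to one slip of wording: since a mark based at \emph{any} site of the spatially extended cluster can push its edge forward, edge jumps of size exactly $\Delta$ occur at rate $\|b\|_\infty\sum_{z\ge\Delta}p(z)$, so this tail sum is the \emph{distribution} of the dominating jump $J$, not its tail; with that reading your identity is right, and $\Exp[J^2]<\infty$ precisely when $m_3:=\sum_z\abs{z}^3p(z)<\infty$, which is where the third moment enters.

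The genuine gap is your last paragraph, and it is not a matter of constants: the scheme described there cannot be completed. First, the extreme-part bound $\|b\|_\infty t\,\Exp_p[\abs{X}^3]/r^3$ contradicts your own rate computation: the rate of edge jumps of size at least $r$ is $\|b\|_\infty\sum_{z\ge r}(z-r+1)p(z)\le \|b\|_\infty m_3/r^2$; the $1/r^3$ bound is a single-site bound and forgets the infinitely many possible origins inside the cluster. Second, no choice of $r(t)$ reconciles the two parts. The Bernstein/Azuma bound for the truncated process, $\exp\bigl(-c\,t/r(t)\bigr)$, is summable only if $r(n)=O(n/\log n)$, hence only for sublinear thresholds; but for any sublinear threshold the extreme part need not be summable: by a layer-cake computation, for $r(n)=n/\log n$ one has $\sum_n n\,\Prob\bigl(J\ge n/\log n\bigr)$ of order $\Exp\bigl[J^2(\log J)^2\bigr]$, which can be infinite even though $\Exp[J^2]<\infty$ (and such jump laws do arise from kernels $p$ with finite third moment). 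Conversely, the linear threshold $r=\delta n$ does make the extreme part summable, since $\sum_n n\,\Prob(J\ge\delta n)\le C\,\Exp[J^2]/\delta^2$ by the same layer-cake computation — this, and not a pointwise Markov bound, is how the third moment must be spent — but at linear truncation Bernstein returns only the constant $e^{-3\eps/(2\delta)}$, where $\eps:=v-\Exp[R_1]$. The missing idea is complete convergence in the sense of Hsu--Robbins--Erd\H{o}s: the unit-time increments of the dominating process are i.i.d.\ with finite variance, and finite variance alone yields $\sum_n\Prob(R_n>vn)<\infty$ for $v>\Exp[R_1]$. Concretely: fix $\delta<\eps/2$; bound $\Prob(\mbox{some edge jump }\ge\delta n\mbox{ before time }n)\le Cn\,\Prob(J\ge\delta n)$, summable as above; and control the $\delta n$-truncated process by a Chernoff inequality with parameter $\theta\sim\log n/(2\delta n)$, which gives decay $n^{-\eps/(2\delta)}$ — polynomial of large exponent, never super-polynomial. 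Note that the resulting $A(n)$ is summable without any pointwise rate better than $o(1/n)$, which is exactly why the lemma is stated in the form $\sum_n A(n)<\infty$ rather than with an explicit decay.
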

 To prove Proposition \ref{prop:macro-stab}, we work with a coupled process,
 and reduce the problem to analysing   the evolution of labeled positive and negative discrepancies 
to control their coalescences. For this we order the discrepancies, we
possibly relabel them according to their movements to favor coalescences, 
and define \textit{windows}, which are space intervals on which coalescences
are favored, in the same spirit as in \cite{bm}.  
The irreducibility assumption (that is \eqref{irreducibility_misanthrope} 
in the case of the misanthrope's process) 
plays an essential role there.  To take advantage of \cite{bm},
we treat separately the movements
of discrepancies corresponding to big jumps,  which can be controlled 
thanks to the finiteness of the first moment 
(that is, assumption \textit{(M4)} on page \pageref{assumptions_M}).    \\ 
\begin{proof}{proposition}{hydro_finite}
By initial assumption \eqref{initial_profile_vague},
$\lim_{N\to\infty}\Delta^N(0)=0$.
Let $\eps>0$, and  $\eps'=\eps/(2V)$, for  $V$  given by
\eqref{speed_pde}. Set $t_k=k\eps'$ for $k\le \kappa:=\lfloor
T/\eps'\rfloor $, $t_{\kappa+1}=T$.  Since the number of steps is
proportional to $\eps$, if we want to bound the total error, the
main step is to prove
\be\label{wearegoing_1}
\limsup_{N\to\infty}\sup_{k=0,\ldots,{\mathcal
K}-1}\left[\Delta^N(t_{k+1})-\Delta^N(t_k)\right]\leq
3\delta\eps,\quad Q\otimes\Prob\mbox{-a.s.} \ee
where  $\delta:=\delta(\eps)$  goes to 0 as $\eps$ goes to 0;
the gaps between discrete times are filled by an  
  estimate for the time
modulus of continuity of $\Delta^N(t)$ (see  \cite[Lemma 4.5]{bgrs3}).  \\ \\
{\em Proof of \eqref{wearegoing_1}}. Since $u(.,t_k)$ has locally
finite variation, by \cite[Lemma 4.2]{bgrs3}, for all $\eps>0$
 we can find  
 functions
\be \label{decomp_approx} v_k=\sum_{l=0}^{l_k}r_{k,l}{\bf
1}_{[x_{k,l},x_{k,l+1})} \ee
with
$ -\infty=x_{k,0}<x_{k,1}<\ldots<x_{k,l_k}<x_{k,l_k+1}=+\infty$,
$r_{k,l}\in{{\mathcal R}^Q}$, $r_{k,0}=r_{k,l_k}=0$,
such that $x_{k,l}-x_{k,l-1}\geq \eps$, and 
\be\label{uniform_approx}
\Delta(u(.,t_k)dx,v_kdx)  \leq  \delta\eps
\ee
 For $t_k\leq t< t_{k+1}$, we denote by $v_k(.,t)$ the entropy
solution to \eqref{hydrodynamics} at time $t$ with Cauchy datum
$v_k(.)$.
 The configuration $\xi^{N,k}$
defined on $(\Omega_{\mathbf A}\otimes\Omega,\mathcal F_{\mathbf
A}\otimes\mathcal F,\Prob_{\mathbf A}\otimes\Prob)$ (see Lemma
\ref{big_coupling}) by
\[
\xi^{N,k}(\omega_{\mathbf
A},\omega)(x):=\eta_{Nt_k}(\alpha(\omega_{\mathbf
A}),\eta^{r_{k,l}}(\omega_{\mathbf A}), \omega)(x),\, \mbox{ if }\,
\lfloor Nx_{k,l}\rfloor \leq x<\lfloor Nx_{k,l+1}\rfloor  \]
is a microscopic version of $v_k(.)$, since by Proposition
\ref{corollary_2_2} with $\lambda=\rho=r^{k,l}$,
\be \label{profile_xi}
\lim_{N\to\infty}\pi^N(\xi^{N,k}(\omega_{\mathbf A},\omega))(dx)
=v_k(.)dx,\quad\Prob_{\mathbf A}\otimes\Prob\mbox{-a.s.} \ee
 We denote by
$\xi^{N,k}_t(\omega_{\mathbf A},\omega)  =
\eta_{t}(\alpha(\omega_{\mathbf A}),\xi^{N,k}(\omega_{\mathbf
A},\omega), \theta_{0,Nt_k}\omega)$
evolution starting  from $\xi^{N,k}$.  
By triangle inequality,
\beq
\Delta^N(t_{k+1})-\Delta^N(t_k) & \leq & \Delta\left[ \pi^N(
\eta^N_{Nt_{k+1}} ),\pi^N( \xi^{N,k}_{N\eps'} )
\right]-\Delta^N(t_k)\label{decomp_delta_1}\\
& + & \Delta\left[ \pi^N( \xi^{N,k}_{N\eps'}
),v_k(.,\eps')dx \right]\label{decomp_delta_2}\\
& + & \Delta(v_k(.,\eps')dx,u(.,t_{k+1})dx)\label{decomp_delta_3}
\eeq
 To conclude, we rely on Properties 
  \eqref{equiv_delta}, \eqref{bmevent} and \eqref{deltastab}  of
$\Delta$:  Since  $\eps'=\eps/(2V)$,  finite propagation property
for \eqref{hydrodynamics} and for the particle system  (see
 Proposition \ref{standard}, \textit{iv)} and Lemma \ref{finite_prop})  and
Proposition \ref{corollary_2_2} imply
\[
\lim_{N\to\infty}\pi^N(
\xi^{N,k}_{N\eps'}(\omega_{\mathbf A},\omega)
)=v_k(.,\eps')dx,\qquad \Prob_A\otimes\Prob\mbox{-a.s.}
\]
Hence, the term \eqref{decomp_delta_2} converges a.s. to $0$ as
$N\to\infty$. By $\Delta$-stability for \eqref{hydrodynamics}, the
term \eqref{decomp_delta_3}  is bounded by 
$\Delta(v_k(.)dx,u(.,t_k)dx)\leq\delta\eps$.
We now consider the term \eqref{decomp_delta_1}.
By macroscopic stability  (Proposition \ref{prop:macro-stab}), 
outside probability $e^{-CN\delta\eps}$,
\be\label{macrostablast}\Delta\left[ \pi^N(
\eta^N_{Nt_{k+1}}),\pi^N( \xi^{N,k}_{N\eps'} ) \right]\leq
\Delta\left[ \pi^N( \eta^N_{Nt_{k}} ),\pi^N( \xi^{N,k} )
\right]+\delta\eps
\ee
Thus the event \eqref{macrostablast} holds a.s. for $N$ large
enough.
By triangle inequality,
\begin{eqnarray*}
&&\Delta\left[ \pi^N( \eta^N_{Nt_{k}} ),\pi^N( \xi^{N,k} ) \right] -
\Delta^N(t_k) \\
&\leq&\Delta\left(u(.,t_k)dx,v_k(.)dx\right)+\Delta\left[v_k(.)dx,\pi^N(\xi^{N,k})\right]
\end{eqnarray*}
for which \eqref{uniform_approx}, \eqref{profile_xi} yield  as
$N\to\infty$ an upper bound $2\delta\varepsilon$, hence
$3\delta\varepsilon$ for the term \eqref{decomp_delta_1}.
\end{proof}
\section{Other models  under  a general framework}\label{sec:othermodels}
 As announced in Section \ref{sec:results}, 
we first define in Subsection \ref{subsec:framework}, 
as in  \cite{ba, bgrs4,swart}  
a general framework (which encompasses all known examples), that we
illustrate with  our reference  examples, 
the misanthropes process  (with generator \eqref{generator}), 
and the $k$-exclusion process (with generator \eqref{generator_k}). 
Next in Subsection \ref{subsec:examples}, we study examples 
of more complex models thanks to this new framework. 
\subsection{Framework}\label{subsec:framework}
This section is based on parts of \cite[Sections 2, 5]{bgrs4}.
The interest of an abstract description is to summarize all details of the 
microscopic dynamics in a single 
mapping, hereafter denoted by $\mathcal T$.  This mapping contains 
both the generator description of the dynamics and its graphical construction.
Once a given model is written in this framework, 
all proofs can be done without any model-specific computations, 
only relying on the properties of $\mathcal T$. \\ \\
\textbf{Monotone transformations.} 
Given an environment $\alpha\in{\mathbf A}$, we are going to define a Markov generator 
whose associated dynamics can be generically understood as follows: we pick a location 
on the lattice and around this location, apply a random monotone transformation to 
the current configuration. Let $(\mathcal V, {\mathcal F}_{\mathcal V}, m)$ 
be a measure space, where $m$ is a nonnegative finite measure. This space will be 
used to generate a monotone conservative transformation, that is a mapping 
$\mathcal T:{\mathbf X}\to{\mathbf X}$ such that:\\
\textit{(i)} $\mathcal T$ is nondecreasing: that is, for every $\eta\in{\mathbf X}$ 
and $\xi\in\mathbf X$, $\eta\leq\xi$ implies $\mathcal T\eta\leq\mathcal T\xi$;\\
\textit{(ii)} $\mathcal T$ acts on finitely many sites, that is, there exists a finite 
subset $S$ of $\Z$ such that, for all $\eta\in\mathbf X$, $\mathcal T\eta$ 
only depends on the restriction of $\eta$ to $S$, and coincides with $\eta$ outside $S$;\\
 \textit{(iii)}  $\mathcal T\eta$ is conservative,  that is,  for every $\eta\in{\mathbf X}$,
\be\label{conservative}
\sum_{x\in S}{\mathcal T}\eta(x)=\sum_{x\in S}\eta(x)
\ee
We denote by $\mathfrak T$ the set of monotone conservative transformations,
 endowed with the $\sigma$-field generated by the evaluation mappings 
$\mathcal T\mapsto\mathcal T\eta$ for all $\eta\in\mathbf X$.  \\ 
\noindent
\textbf{Definition of the dynamics.} 
In order to define the process, we specify a mapping 
\[
{\mathbf A}\times{\mathcal V}\to{\mathfrak T},\quad
(\alpha,v)\mapsto\mathcal T^{\alpha,v}
\]
such that  
for every $\alpha\in\mathbf A$ and $\eta\in{\mathbf X}$, 
the mapping $v\mapsto \mathcal T^{\alpha,v}\eta$ is measurable from 
$(\mathcal V,{\mathcal F}_{\mathcal V}, m)$  to $(\mathbf X, \mathcal F_0)$. 
When $m$ is a probability measure, this amounts to saying that 
for each $\alpha\in\mathbf A$, the mapping $v\mapsto \mathcal T^{\alpha,v}$ 
is a $\mathfrak T$-valued random variable.
The transformation $\mathcal T^{\alpha,v}$ must be understood 
as applying a certain update rule around $0$ to the current configuration, 
depending on the environment around $0$.  If  $x\in\Z\setminus\{0\}$,  we define
\be\label{shifted_transfo}
\mathcal T^{\alpha,x,v}:=\tau_{-x}T^{\tau_x\alpha,v}\tau_x
\ee
This definition can be understood as applying the same update rule around 
site  $x$,  which involves simultaneous shifts of the initial 
environment and transformation.\\ \\
We now define the Markov generator
\be\label{gengen}
L_\alpha f(\eta)=\sum_{x\in\Z}\int_{\mathcal V}\left[
f\left({\mathcal T}^{\alpha,x,v}\eta\right)-f(\eta)
\right]m(dv) \ee
As a result of \eqref{shifted_transfo}, the generator \eqref{gengen} 
satisfies the commutation property  \eqref{commutation}.\\ \\ 
 \textbf{Basic examples.}  To illustrate the above framework, 
 we come back to our reference examples of
Section \ref{sec:story}.
\\ \\
\textit{The Misanthrope's process.}
Let
\be\label{special_choice}
\mathcal V:=\Z\times[0,1],\quad
v=(z,u)\in\mathcal V,\quad
m(dv)=c^{-1}||b||_\infty p(dz)\lambda_{[0,1]}(du)
\ee
For $v=(z,u)\in\mathcal V$,  ${\mathcal T}^{\alpha,v}$ is defined by 
\be\label{update_misanthrope}
{\mathcal T}^{\alpha,v}\eta=\left\{
\ba{lll}
\eta^{0,z} & \mbox{if} & u<\displaystyle\alpha(0){\frac{b(\eta(0),\eta(z))}
{c^{-1}||b||_\infty}}\\
\eta &  & \mbox{otherwise}
\ea
\right.
\ee
Once given ${\mathcal T}^{\alpha,v}$ in \eqref{update_misanthrope},
we deduce $\mathcal T^{\alpha,x,v}$ from \eqref{shifted_transfo}:
\be\label{update_misanthrope_x}
{\mathcal T}^{\alpha,x,v}\eta=\left\{
\ba{lll}
\eta^{x,x+z} & \mbox{if} & u<\displaystyle\alpha(x){\frac{b(\eta(x),\eta(x+z))}
{c^{-1}||b||_\infty}}\\
\eta &  & \mbox{otherwise}
\ea
\right.
\ee
Though  $\mathcal T^{\alpha,v}$ is the actual input of the model, 
from which ${\mathcal T}^{\alpha,x,v}$ follows, in the forthcoming examples, 
for the sake of readability, we will directly define ${\mathcal T}^{\alpha,x,v}$.\\ \\
Monotonicity of the transformation $\mathcal T^{\alpha,x,z}$ given in 
\eqref{update_misanthrope_x} follows from assumption {\em (M2)}  on page \pageref{assumptions_M}. 
One can deduce from \eqref{gengen} and \eqref{update_misanthrope_x} 
 that $L_\alpha$ is indeed given by \eqref{generator}. 
\\ \\
 {\em The $k$-step exclusion process.} Here we let $\mathcal V=\Z^k$ 
and $m$ denote the distribution of the first $k$ steps of a random walk 
on $\Z$ with increment distribution $p(.)$ absorbed at $0$. We define, 
for $(x,v,\eta)\in\Z\times{\mathcal V}\times{\mathbf X}$, with $v=(z_1,\ldots,z_k)$,
\[
N(x,v,\eta)=\inf\{
i\in\{1,\ldots,k\}:\,\eta(x+z_i)=0
\}
\]
with the convention that $\inf\emptyset=+\infty$. We then set
\be\label{transfo_kstep}
{\mathcal T}^{\alpha,x,v}\eta=\left\{
\ba{lll}
\eta^{x,x+N(x,v,\eta)} & \mbox{if} & N(x,v,\eta)<+\infty\\
\eta  & \mbox{if} & N(x,v,\eta)=+\infty
\ea
\right.
\ee
One can show that this transformation is monotone,  either directly,
or by application of Lemma \ref{attractive_kstep}, since the $k$-step exclusion 
is a particular $k$-step misanthropes process (see special case 2a below). 
Plugging \eqref{transfo_kstep} into \eqref{gengen} yields \eqref{generator_k}. \\ \\ 
We now describe the so-called \textit{graphical construction} of the system given by
\eqref{gengen}, that is its pathwise construction on a Poisson space.
We consider the probability space $(\Omega,{\mathcal F},\Prob)$ of
locally finite point measures $\omega(dt,dx,dv)$ on
$\R^+\times\Z\times\mathcal V$, where $\mathcal F$ is generated by the
mappings $\omega\mapsto\omega(S)$ for Borel sets $S$  of 
$\R^+\times\Z\times\mathcal V$,  and $\Prob$
makes $\omega$ a Poisson process with intensity
\[
M(dt,dx,dv)=
\lambda_{\R^+}(dt)\lambda_\Z(dx)m(dv) \]
  denoting by $\lambda$  either   the Lebesgue or
 the counting measure. We write $\Exp$  for  expectation 
 with respect to $\Prob$. 
There exists a unique mapping 
\be \label{unique_mapping} (\alpha,\eta_0,t)\in{{\mathbf
A}}\times{\mathbf
X}\times\R^+\mapsto\eta_t=\eta_t(\alpha,\eta_0,\omega)\in{\mathbf X}
\ee
satisfying: \textit{(a)} $t\mapsto\eta_t(\alpha,\eta_0,\omega)$ is
right-continuous; \textit{(b)} $\eta_0(\alpha,\eta_0,\omega)=\eta_0$; \textit{(c)} the
particle configuration  is updated at points  $(t,x,v)\in\omega$ (and only at such  points;
by $(t,x,v)\in\omega$ we mean  $\omega\{(t,x,v)\}=1$)  according to the rule
\be\label{update_rule}
\eta_t(\alpha,\eta_0,\omega)={\mathcal T}^{\alpha,x,v}\eta_{t^-}(\alpha,\eta_0,\omega)
\ee
The processes defined by \eqref{gengen} and \eqref{unique_mapping}--\eqref{update_rule} 
exist and are equal in law under general conditions given in \cite{swart}, 
see also \cite{bmrs2} for a summary of this construction). \\ \\
\textbf{Coupling and monotonicity.}
The monotonocity of ${\mathcal T}^{\alpha,x,u}$ implies monotone dependence
\eqref{attractive_1} with respect to the initial state.  Thus, an arbitrary number
of processes can be coupled via the graphical construction. This implies complete monotonicity and thus attractiveness. It is also possible to define the coupling
of any number of processes using the tranformation $\mathcal T$.
For instance, in order to couple two processes, we define 
the coupled generator  $\overline{L}_\alpha$ on ${\mathbf X}^2$ by
\be\label{coupling_t}
\overline{L}_\alpha f(\eta,\xi):=\sum_{x\in\Z}\int_{\mathcal V}\left[
f\left({\mathcal T}^{\alpha,x,v}\eta,{\mathcal T}^{\alpha,x,v}\xi\right)-f(\eta,\xi)
\right]m(dv)
\ee
for any local function $f$ on ${\mathbf X}^2$. 
\subsection{ Examples }\label{subsec:examples}
 We refer the reader to \cite[Section 5]{bgrs4} for various examples 
of completely monotone models defined using this framework. 
We now review two of the models introduced in \cite[Section 5]{bgrs4}, 
then present  a new model containing all the other models in this paper,
 \textit{the $k$-step misanthropes process}.  \\ \\
\textbf{The generalized misanthropes' process.}  (\cite[Section 5.1]{bgrs4}). 
Let $K\in\N$. Let $c\in(0,1)$, and  $p(.)$ (resp. $P(.)$), be a probability
distribution on $\Z$.  Define $\mathbf A$ to be the set of  functions
 $B:\Z^2\times\{0,\ldots,K\}^2\to\R^+$ such that:\\ \\
{\em (GM1)} For all $(x,z)\in\Z^2$,  $B(x,z,.,.)$ satisfies 
assumptions \textit{(M1)--(M3)}  on page \pageref{assumptions_M};\\  
{\em (GM2)} There exists a constant $C>0$ and a probability measure $P(.)$ on $\Z$ such that 
$
B(x,z,K,1)\leq CP(z)
$  for all $x\in\Z$.\\ \\
 Assumption \textit{(GM2)} is a natural sufficient assumption 
for the existence of the process and graphical construction below. 
The shift operator  $\tau_y$  on $\mathbf A$ is defined by
$
(\tau_y B)(x,z,n,m)=B(x+y,z,n,m)
$
We generalize \eqref{generator} by setting
\be\label{generator_genmis}
L_B f(\eta)=\sum_{x,y\in{\Z}}B(x,y-x,\eta(x),\eta(y)) \left[ f\left(\eta^{x,y} \right)-f(\eta)
\right]
\ee
Thus, the environment at site $x$ is given here by the jump rate function  $B(x,.,.)$  
with which jump rates from site $x$ are computed.\\ \\
For $v=(z,u)$, set  $m(dv)=CP(dz)\lambda_{[0,1]}(du)$
in \eqref{special_choice}, and replace \eqref{update_misanthrope} with
\be\label{update_genmis}
{\mathcal T}^{B,x,v}\eta=\left\{
\ba{lll}
\eta^{x,x+z} & \mbox{if} & \displaystyle{ u<\frac{B(x,z,\eta(x),\eta(x+z))}
{CP(z)}}\\
\eta &  & \mbox{otherwise}
\ea
\right.
\ee
 A natural irreducibility assumption generalizing 
\eqref{irreducibility_misanthrope} is the existence of a 
constant $C>0$ and a probability measure $p(.)$ on $\Z$ 
satisfying \eqref{irreducibility_misanthrope}, such that
\be\label{irreducibility_genmis}
\forall z\in\Z,\quad\inf_{x\in\Z}b(x,z,1,K-1)\geq c p(z)
\ee
The basic model \eqref{generator} is  recovered for
$B(x,z,n,m)=\alpha(x)p(z)b(n,m)$. Another natural example
is the Misanthrope's process with bond disorder.
Here ${\mathbf A}=[c,1/c]^{\Z^2}$ with the space shift defined by 
$\tau_z\alpha=\alpha(.+z,.+z)$. We set
$B(x,z,n,m)=\alpha(x,x+z)b(n,m)$,  where  $\alpha\in\mathbf A$. 
Assumption {\em (GM2)} is now equivalent to existence of a constant 
$C>0$ and a probability measure  $P(.)$  on $\Z$ such that 
$\alpha(x,y)\leq C P(y-x)$. \\ \\
 The microscopic flux function $j_2$ in \eqref{def_f} is given here by
\be\label{microflux_genmis}
j_2(\alpha,\eta)=\sum_{z\in\Z}zB(0,z,\eta(0),\eta(z))
\ee  
\textbf{Asymmetric exclusion process with overtaking.}
 This example is a particular case of the generalized $k$-step $K$-exclusion
studied in \cite[Section 5.2, Example 5.4]{bgrs4}, see also the traffic flow model in
\cite[Section 5.3]{bgrs4}.  The former model is itself a special case of the $k$-step misanthrope process
defined below.\\ 
Let $K=1$, $k\in\N$, and  $\mathfrak K$  denote the set of 
$(2k)$-tuples $(\beta^j)_{j\in\{-k,\ldots,k\}\setminus\{0\}}$ such that
$\beta^{j+1}\leq\beta^j$ for every $j=1,\ldots,k-1$, $\beta^{j-1}\leq\beta^j$ 
for every $j=-1,\ldots,-k+1$, and $\beta^1+\beta^{-1}>0$.
We define  ${\mathbf A}={\mathfrak K}^\Z$.  An element of $\mathbf A$ is denoted by 
$\beta=(\beta_x^j)_{j\in\{-k,\ldots,k\},x\in\Z}$.
The dynamics of this model is defined informally as follows. 
A site $x\in\Z$ is chosen as the initial site, 
then a jump direction (right or left) is chosen, and in this direction, 
the particle jumps to the first available site if it is no more than $k$ sites ahead.
The jump occurs at rate $\beta_x^j$ if the first available site is $x+j$. 
Let  $\mathcal V=[0,1]\times\{-1,1\}$ and $m=\delta_{1}+\delta_{-1}$.
For $x\in\Z$ and $v\in\{-1,1\}$, we set 
\[
N(x,v,\eta):=\inf\left\{
i\in\{1,\ldots,k\}:\,\eta(x+iv)=0
\right\}
\]
with the usual convention $\inf\emptyset=+\infty$.
The corresponding monotone transformation is defined  for $(u,v)\in\mathcal V$ by
\be\label{transfo_over}
{\mathcal T}^{\alpha,x,v}\eta=\left\{
\ba{lll}
\eta^{x,x+N(x,v,\eta)v} & \mbox{if  }\quad  N(x,v,\eta)<+\infty\mbox{ and }u\leq\beta_x^{vN(x,v,\eta)}\\ 
\eta & \quad\mbox{otherwise} 
\ea
\right.
\ee
 Monotonicity of the transformation $\mathcal T^{\alpha,x,z}$ is given by 
\cite[Lemma 5.1]{bgrs4},  and is also a particular case of Lemma \ref{attractive_kstep} below, 
 which states the same property for the $k$-step misanthrope's process.  
It follows from \eqref{gengen} and \eqref{transfo_over} 
that the generator of this process is given for $\beta\in\mathbf A$ by
\begin{eqnarray}\nonumber
L_\beta f(\eta) & = & \sum_{x\in\Z}\eta(x)\sum_{j=1}^k\left\{
\beta^j_x[1-\eta(x+j)]\prod_{i=1}^{i=j-1}\eta(x+i)\right.\\
& + & 
\left.\beta^{-j}_x[1-\eta(x-j)]\prod_{i=1}^{i=j-1}\eta(x-i)
\right\}\label{generator_overtaking}
\end{eqnarray}
A sufficient irreducibility property replacing 
\eqref{irreducibility_misanthrope} is the existence of a constant $c>0$ such that 
\be\label{irreducibility_overtaking}
\inf_{x\in\Z}\left(
\beta_x^1+\beta_x^{-1}
\right)>0
\ee 
The microscopic flux function $j_2$ in \eqref{def_f} is given here by
\begin{eqnarray}\nonumber
j_2(\beta,\eta) & = & \eta(0)\sum_{j=1}^k j\beta^j_0[1-\eta(j)]\prod_{i=1}^{j-1}\eta(i)\\
& - & \eta(0)\sum_{j=1}^k j\beta^{-j}_0[1-\eta(j)]\prod_{i=1}^{j-1}\eta(i)\label{flux_overtaking}
\end{eqnarray}
with the convention that an empty product is equal to $1$. 
For $\rho\in[0,1]$, let ${\mathcal B}_\rho$ denote the Bernoulli distribution 
on $\{0,1\}$. In the absence of disorder, that is when $\beta^i_x$ does not 
depend on $x$, the measure $\nu_\rho$ defined by
\[
\nu_\rho(d\eta)=\bigotimes_{x\in\Z}{\mathcal B}_\rho[d\eta(x)]
\]
is invariant for this process.  It follows from \eqref{flux_overtaking}  
that the macroscopic flux function for the model without disorder is given by
\be\label{macroflux_overtaking}
G(u)=(1-u)\sum_{j=1}^k j[\beta^j-\beta^{-j}]u^j
\ee\\ 
%
%
 \textbf{The $k$-step misanthrope's process.} 
%
In the sequel, an element of $\Z^k$ is denoted by
$\underline{z}=(z_1,\ldots,z_k)$.
Let  $K\geq 1$,  $k\geq 1$, $c\in(0,1)$. \\ \\
Define $\mathcal D_0$ to be the set of functions 
$b:\{0,\ldots,K\}^2\to\R^+$ such that $b(0,.)=b(.,K)=0$, 
$b(n,m)>0$ for $n>0$ and $m<K$, and $b$ is nondecreasing 
(resp. nonincreasing) w.r.t. its first (resp. second) argument.
Let $\mathcal D$ denote the set of functions
$b=(b^1,\ldots,b^k)$ from
$\Z^k\times\{0,\ldots,K\}^2\to(\R^+)^k$ such that $b^j(\underline{z},.,.)\in\mathcal D_0$
for each $j=1,\ldots,k$, and
%
\be\label{decreasing_rates} \forall j=2,\ldots,k, \quad b^j(.,K,0)\leq b^{j-1}(.,1,K-1)\ee
Let $q$ be a probability distribution on $\Z^k$, and $b\in\mathcal D$.
We define the $(q,b)$ $k$-step  misanthrope  
process as follows.
A particle at $x$  (if some)  picks a $q$-distributed random vector $\underline{Z}=(Z_1,\ldots,Z_k)$,
and jumps to the  first  site $x+Z_i$ ($i\in\{1,\ldots,k\})$  with strictly less than $K$ particles
along the path $(x+Z_1,\ldots,x+Z_k)$, if such a site exists, 
with rate $b^i(\underline{Z},\eta(x),\eta(x+Z_i))$.
Otherwise, it stays at $x$.\\ \\
Next, disorder is introduced:
the environment is a field
$\alpha=((q_x,b_x):\,x\in\Z)\in{\mathbf A}:=(\mathcal
P(\Z^k)\times\mathcal D)^\Z$. For a given realization of the
environment,
the distribution of the path $\underline{Z}$ picked by a particle
at $x$ is $q_x$, and the rate at which it jumps to $x+Z_i$ is $b^i_x(\underline{Z},\eta(x),\eta(x+Z_i))$.
The corresponding generator is given by
\be\label{christophe_kstep}
{L}_{\alpha}f(\eta )
 =  \sum_{i=1}^{k}\sum_{x,y\in\Z}c_\alpha^i(x,y,\eta )\left[
f(\eta
^{x,y})-f(\eta )\right]\label{eq:kstepgenerator}\ee
 for a local function $f$ on $\mathbf X$,
where (with the convention that an empty product is equal to $1$)
\[c_\alpha^i(x,y,\eta )=
\int\left[
b^i_x(\underline{z},\eta(x),\eta(y)) {\bf 1}_{\{x+z_i=y\}} \prod_{j=1}^{i-1}{\bf 1}_{\{\eta(x+z_j)=K\}}
\right]\,dq_x(\underline{z})
\]
The distribution $Q$ of the environment on $\mathbf A$  is assumed ergodic
with respect to the space shift  $\tau_y$, where
$\tau_y\alpha=((q_{x+y},b_{x+y}):\,x\in\Z)$. \\ \\
 A sufficient condition for the existence of the process and graphical construction below is
the existence of a probability measure $P(.)$ on $\Z$ and a constant $C>0$ such that
%
\be\label{summability_kstep}\sup_{i=1,...,k}\sup_{x\in\Z}q^i_x(.)  \leq   C^{-1}P(.)\ee
where $q_x^i$ denotes the $i$-th marginal of $q_x$.
On the other hand, a natural irreducibility assumption 
sufficient for Proposition \ref{invariant} and Theorem \ref{th:hydro}, is the existence of a constant $c>0$, and  a probability measure $p(.)$ on $\Z$ satisfying \eqref{irreducibility_kstep}, such that
\be \inf_{x\in\Z} q^1_x(.)  \geq  c p(.)\label{irreducibility_kstep}\ee\\
To define a graphical construction, we set, for
$(x,\underline{z},\eta)\in\Z\times\Z^k\times{\mathbf X}$,  $b\in\mathcal D_0$ and $u\in[0,1]$,
\beq\label{nbsteps} N(x,\underline{z},\eta) &=&
\inf\left\{i\in\{1,\ldots,k\}:\,\eta\left(x+z_i\right)<K\right\}
 \mbox{ with} \inf\emptyset=+\infty \\ 
\label{finloc} Y(x,\underline{z},\eta) &=& \left\{
\ba{lll}
x+z_{N(x,\underline{z},\eta)} & \mbox{if} & N(x,\underline{z},\eta)<+\infty\\
x & \mbox{if} & N(x,\underline{z},\eta)=+\infty
\ea
\right. \\
\label{update_kstep}
{{\mathcal T}_0}^{x,\underline{z},b,u}\eta&=&\left\{
\ba{lll}
\eta^{x,Y(x,\underline{z},\eta)} & \mbox{if} 
& u< b^{N(x,\underline{z},\eta)}(\underline{z},\eta(x),\eta(Y(x,\underline{z},\eta))) \\
\eta &  & \mbox{otherwise}
\ea
\right.
\eeq
 Let $\mathcal V=[0,1]\times[0,1]$,
$m=\lambda_{[0,1]}\otimes\lambda_{[0,1]}$. For each  probability
distribution $q$ on $\Z^k$, there exists a mapping
$F_q:[0,1]\to\Z^k$ such that $F_q(V_1)$ has distribution $q$ if
$V_1$ is uniformly distributed on $[0,1]$. Then the transformation
$\mathcal T$ in \eqref{update_rule} is defined by  (with
$v=(v_1,v_2)$ and  $\alpha=((q_x,\beta_x):\,x\in\Z))$
\be\label{update_kstep}
{\mathcal T}^{\alpha,x,v}\eta={\mathcal T}_0^{x,F_{q_x}(v_1),b_x(F_{q_x}(v_1),.,.),v_2}\eta
\ee
 The definition of $j_2$ in \eqref{def_f}, applied to the generator \eqref{eq:kstepgenerator}, yields
\be\label{flux_kstepmis}
j_2(\alpha,\eta)=\sum_{z\in\Z}zc_\alpha(0,z,\eta)
\ee
where
\[
c_\alpha(x,y,\eta):=\sum_{i=1}^k c_\alpha^i(x,y,\eta)
\]  
\textbf{Special cases.} \\ \\
1. The generalized misanthrope's process is recovered for $k=1$, 
because then in \eqref{christophe_kstep} we have
$c_\alpha^1(x,y,\eta)=q_x^1(y-x)b_x^1(y-x,\eta(x),\eta(y))$. \\ \\
2.  A {\em generalized disordered} $k$-step  exclusion  process is obtained if $K=1$ 
and  $b^j_x(\underline{z},n,m)=\beta^j_x(\underline{z})n(1-m)$. In this process,
if site $x$ is the initial location of an attempted jump, 
and a particle is indeed present at $x$, a random path of length $k$ 
with distribution $q^i_x$ is picked, and the particle tries to find an empty location 
along this path. If it finds none, then it stays at $x$. Previous versions of the 
$k$-step exclusion process are recovered
if one makes special choices for the distribution $q_x^i$:\\ \\
2a. The usual $k$-step exclusion process with site disorder, whose generator 
was given by \eqref{generator_k}, corresponds to the case where $q_x^i$ is 
the distribution of the first $k$ steps of a random walk with kernel $p(.)$ 
absorbed at $0$, and $\beta^j_x=\alpha(x)$.\\ \\
2b. The exclusion process with overtaking, whose generator was given by 
\eqref{generator_overtaking}, corresponds to the case where the random path 
is chosen as follows: first, one picks with equal probability a jumping direction 
(left or right); next, one moves in this direction by successive deterministic jumps of size $1$.\\ \\
3. For $K\geq 2$, the generalized $k$-step $K$-exclusion process 
(\cite[Subsection 5.2]{bgrs4})  corresponds to
$b^j_x(\underline{z},n,m)=\beta^j_x(\underline{z}){\bf 1}_{\{n>0\}}{\bf 1}_{\{m<K\}}$.\\ \\
Returning to the general case, condition \eqref{decreasing_rates} is the relevant extension of the 
condition $\beta^j_x(\underline{z})\leq\beta^{j-1}_x(\underline{z})$ in the exclusion process with overtaking. 
If $K\geq 2$, it means that {\em any} possible $j$-step jump has rate 
larger or equal than {\em any} $(j-1)$-step jump.\\ \\
 The monotonicity property \eqref{attractive_1} of the graphical construction, 
and thus the complete monotonicity of the process, is a consequence of the following lemma.
\begin{lemma}\label{attractive_kstep}
For every $(x,\underline{z},u)\in\Z\times\Z^k\times[0,1]$,
${\mathcal T}_0^{x,\underline{z},\beta,u}$ is an increasing
mapping from ${\mathbf X}$ to ${\mathbf X}$.
\end{lemma}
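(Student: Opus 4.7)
The plan is to fix $\eta\le\xi$ in $\mathbf X$ together with the data $(x,\underline{z},b,u)$, and to show that the site-wise ordering is preserved by ${\mathcal T}_0={\mathcal T}_0^{x,\underline{z},b,u}$. Only the three sites $x$, $Y_\eta:=Y(x,\underline{z},\eta)$ and $Y_\xi:=Y(x,\underline{z},\xi)$ can be affected, so it suffices to check the inequality at these sites. The crucial preliminary observation is that $N_\eta\le N_\xi$, where $N_\eta:=N(x,\underline{z},\eta)$ and $N_\xi:=N(x,\underline{z},\xi)$: indeed, for every $i< N_\xi$ one has $\xi(x+z_i)=K$, whence $\eta(x+z_i)=K$ as well (since $\eta\le\xi\le K$), so the defining infimum for $N_\eta$ is not attained before $N_\xi$ beyond what $\eta$'s own profile dictates—more precisely, the set of indices where $\xi$ is ``not full'' is contained in the analogous set for $\eta$.

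The proof then splits into two cases. \emph{Case A:} $N_\eta=N_\xi=:N<+\infty$, in which $Y_\eta=Y_\xi=:Y$. The only delicate sub-cases are the ``asymmetric'' ones (exactly one of $\eta,\xi$ jumps). If $\eta$ jumps and $\xi$ does not, one must check $\eta(Y)+1\le\xi(Y)$; assuming for contradiction that $\eta(Y)=\xi(Y)$, the monotonicity of $b$ in its two arguments (first nondecreasing, second nonincreasing) yields $b^N(\underline{z},\eta(x),\eta(Y))\le b^N(\underline{z},\xi(x),\xi(Y))$, contradicting $u<b^N(\underline{z},\eta(x),\eta(Y))$ and $u\ge b^N(\underline{z},\xi(x),\xi(Y))$. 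The symmetric sub-case (only $\xi$ jumps) is ruled out analogously at site $x$. \emph{Case B:} $N_\eta<N_\xi$. The only scenario where ordering could potentially fail at site $x$ is when $\xi$ jumps and $\eta$ does not; the plan is to show this forces $\eta(x)=0$, so that $\eta(x)=0<\xi(x)$ already gives the margin needed to absorb $\xi$'s outgoing particle. At site $Y_\xi$, no issue arises since $\eta(Y_\xi)\le\xi(Y_\xi)\le\xi(Y_\xi)+1$; at site $Y_\eta$, $\eta$ is unchanged and $\xi(Y_\eta)=K$ by definition of $N_\xi>N_\eta$, so the bound is preserved.

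The key technical step, and the main obstacle, is establishing the implication: \emph{if $\xi$ jumps, $N_\eta<N_\xi$ and $\eta(x)\ge 1$, then $\eta$ also jumps}. This is where the structural assumption \eqref{decreasing_rates} enters decisively. Noting that $\eta(Y_\eta)\le K-1$ by definition of $N_\eta<+\infty$, one chains
\[
u<b^{N_\xi}(\underline{z},\xi(x),\xi(Y_\xi))\le b^{N_\xi}(\underline{z},K,0)\le b^{N_\xi-1}(\underline{z},1,K-1)\le b^{N_\xi-1}(\underline{z},K,0)\le\cdots\le b^{N_\eta}(\underline{z},1,K-1),
\]
alternating monotonicity of $b^j$ (increasing in its first, decreasing in its second argument) with the ``step-reduction'' inequality \eqref{decreasing_rates}; a final application of monotonicity using $\eta(x)\ge 1$ and $\eta(Y_\eta)\le K-1$ gives $u<b^{N_\eta}(\underline{z},\eta(x),\eta(Y_\eta))$, so $\eta$ jumps. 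This telescoping via \eqref{decreasing_rates} is the real content of the lemma and the reason this condition was imposed on the rates; the remaining possibility $\eta(x)=0$ is handled by the elementary observation above, and the possibility $N_\eta=+\infty$ is excluded because $\eta\le\xi$ would then force $N_\xi=+\infty$, precluding a $\xi$-jump. Assembling the cases yields ${\mathcal T}_0\eta\le{\mathcal T}_0\xi$ site by site.
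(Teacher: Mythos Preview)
Your proof is correct and follows essentially the same route as the paper's: a case analysis on whether $N_\eta=N_\xi$ or $N_\eta<N_\xi$, with the key step being the chain of inequalities derived from \eqref{decreasing_rates} and the monotonicity of each $b^j$ to force $\beta\ge\beta'$ when $N_\eta<N_\xi$ and $\eta(x)\ge 1$. The paper organizes the sub-cases by the position of $u$ relative to $\beta,\beta'$ whereas you organize them by which of $\eta,\xi$ actually jumps, but this is purely cosmetic.

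One slip to fix: in your justification of $N_\eta\le N_\xi$, the clause ``for every $i<N_\xi$ one has $\xi(x+z_i)=K$, whence $\eta(x+z_i)=K$ as well (since $\eta\le\xi\le K$)'' is backwards---from $\eta\le\xi$ and $\xi(x+z_i)=K$ you cannot conclude $\eta(x+z_i)=K$, and if you could, it would give $N_\eta\ge N_\xi$. Your ``more precisely'' sentence is the correct argument: $\{i:\xi(x+z_i)<K\}\subseteq\{i:\eta(x+z_i)<K\}$, so the infimum defining $N_\eta$ is taken over a larger set. You should also state explicitly that the trivial case $N_\eta=N_\xi=+\infty$ leaves both configurations unchanged, and in Case B briefly check the sub-cases where $\eta$ jumps (these are easy since then $\xi(Y_\eta)=K$), rather than only treating the sub-case ``$\xi$ jumps, $\eta$ does not''.
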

\begin{proof}{lemma}{attractive_kstep}
Let $(\eta,\xi)\in{\mathbf X}^2$  with $\eta\leq\xi$. To prove that
${\mathcal T}_0^{x,\underline{z},\beta,u}\eta\leq {\mathcal
T}_0^{x,\underline{z},b,u}\xi$, since $\eta$ and $\xi$ can only
possibly change at sites $x$, $y:=Y(x,\underline{z},\eta)$  and
$y':=Y(x,\underline{z},\xi)$,  it is sufficient to verify the
inequality at these sites.
\\ \\
If $\xi(x)=0$, then by \eqref{update_kstep}, $\eta$ and $\xi$ are
both unchanged by ${\mathcal T}_0^{x,\underline{z},b,u}$. If
$\eta(x)=0<\xi(x)$, then  ${\mathcal
T}_0^{x,\underline{z},b,u}\xi(y')
\geq\xi(y')\geq\eta(y')={\mathcal T}_0^{x,\underline{z},b,u}\eta(y')$.  \\ \\
Now assume $\eta(x)>0$. Then $\eta\leq\xi$ implies 
$N(x,\underline{z},\eta)\leq N(x,\underline{z},\xi)$. If 
$N(x,\underline{z},\eta)=+\infty$,  $\eta$ and $\xi$ are unchanged.
If $N(x,\underline{z},\eta)<N(x,\underline{z},\xi)=+\infty$, 
then ${\mathcal T}_0^{x,\underline{z},b,u}\eta=\eta^{x,y}$, and  $\xi(y)=K$.  Thus,
${\mathcal T}_0^{x,\underline{z},b,u}\eta(x)=\eta(x)-1\leq\xi(x)
={\mathcal T}_0^{x,\underline{z},b,u}\xi(x)$, and   
${\mathcal T}_0^{x,\underline{z},b,u}\xi(y)=\xi(y)
=K\geq{\mathcal T}_0^{x,\underline{z},b,u}\eta(y)$. \\ \\
In the sequel, we assume $N(x,\underline{z},\eta)$ and 
$N(x,\underline{z},\xi)$ both finite. Let 
$\beta:=b^{N(x,\underline{z},\eta)}(\eta(x),\eta(y))$ 
and $\beta':=b^{N(x,\underline{z},\xi)}(\xi(x),\xi(y'))$. \\ \\
1) Assume $N(x,\underline{z},\eta)=N(x,\underline{z},\xi)<+\infty$, then $y=y'$.
If $u\geq\max(\beta,\beta')$, both $\eta$ and $\xi$ are unchanged. If $u<\min(\beta,\beta')$,
${\mathcal T}_0^{x,\underline{z},b,u}\eta=\eta^{x,y}$ and
${\mathcal T}_0^{x,\underline{z},b,u}\xi=\xi^{x,y}$, whence the
conclusion. We are left to examine different cases where 
$\min(\beta,\beta')\leq u<\max(\beta,\beta')$.\\ \\
a) If $\eta(x)=\xi(x)$, then $\beta'\leq \beta$, 
and $\beta'\leq u<\beta$ implies $\eta(y)<\xi(y)$. In this case,
${\mathcal T}_0^{x,\underline{z},b,u}\xi(x)=\xi(x)\geq\eta(x)>{\mathcal T}_0^{x,\underline{z},b,u}\eta(x)$ and
${\mathcal T}_0^{x,\underline{z},b,u}\xi(y)=\xi(y)=K\geq{\mathcal T}_0^{x,\underline{z},b,u}\eta(y)$.\\ \\
b) If $\eta(x)<\xi(x)$, then 
${\mathcal T}_0^{x,\underline{z},b,u}\xi(x)\geq\xi(x)-1\geq\eta(x)
\geq{\mathcal T}_0^{x,\underline{z},b,u}\eta(x)$.
If $\beta\leq u<\beta'$, then 
${\mathcal T}_0^{x,\underline{z},b,u}\xi(y)=\xi(y)+1>\eta(y)={\mathcal T}_0^{x,\underline{z},b,u}\eta(y)$.
If $\beta'\leq u<\beta$, then $\eta(y)<\xi(y)$ and 
${\mathcal T}_0^{x,\underline{z},b,u}\xi(y)=\xi(y)\geq\eta(y)+1
={\mathcal T}_0^{x,\underline{z},b,u}\eta(y)$.\\ \\
2) Assume
$N(x,\underline{z},\eta)<N(x,\underline{z},\xi)<+\infty$, hence
 $\beta\geq \beta'$ by \eqref{decreasing_rates}, and  $\eta(y)<\xi(y)=K$.
If $u\geq \beta$, $\eta$ and $\xi$ are unchanged.
 If $u<\beta'$, then  ${\mathcal
T}_0^{x,\underline{z},b,u}\eta(y)=\eta(y)+1\leq\xi(y)={\mathcal
T}_0^{x,\underline{z},b,u}\xi(y)$,  and ${\mathcal
T}_0^{x,\underline{z},b,u}\xi(y')=\xi(y')+1\geq {\mathcal
T}_0^{x,\underline{z},b,u}\eta(y')=\eta(y')$.  If $\beta'\leq u<\beta$,
then  ${\mathcal T}_0^{x,\underline{z},\beta,u}\eta(x)=\eta(x)-1\leq
{\mathcal T}_0^{x,\underline{z},b,u}\xi(x)$ and   ${\mathcal
T}_0^{x,\underline{z},b,u}\eta(y)=\eta(y)+1\leq {\mathcal
T}_0^{x,\underline{z},b,u}\xi(y)=\xi(y)=K$.
\end{proof}

\begin{thebibliography}{99}
%
\bibitem{andjel}
Andjel, E.D. Invariant measures for the zero range process. {\em
Ann. Probab.} {\bf 10} (1982), no. 3, 525--547.
%
 \bibitem{afs}
Andjel, E., Ferrari, P.A., Siqueira, A. Law of large numbers for the
asymmetric exclusion process. {\em Stoch. Process. Appl.} {\bf 132}
(2004) no. 2, 217--233. 
%
 \bibitem{ak}
Andjel, E.D., Kipnis, C. Derivation of the hydrodynamical equation
for the zero range interaction process. {\em Ann. Probab.} {\bf 12}
(1984), 325--334. 
%
\bibitem{av}
Andjel, E.D., Vares, M.E. Hydrodynamic equations for attractive
particle systems on $\Z$. {\em J. Stat. Phys.} {\bf 47} (1987), no.
1/2, 265--288.
%
Correction to : ``Hydrodynamic equations for attractive particle
systems on $\Z$''. {\em J. Stat. Phys.} {\bf 113} (2003), no. 1-2, 379--380.
%
\bibitem{ba} Bahadoran, C.
Blockage hydrodynamics of driven conservative
systems. {\em Ann. Probab.} {\bf 32}  (2004),  no. 1B, 805--854.
%
\bibitem{bgrs1}
Bahadoran, C., Guiol, H., Ravishankar, K., Saada, E. A constructive
approach to Euler hydrodynamics for attractive particle systems.
Application to $k$-step exclusion. {\em Stoch. Process. Appl.} {\bf
99} (2002), no. 1, 1--30.
%
\bibitem{bgrs2}
Bahadoran, C., Guiol, H., Ravishankar, K., Saada, E. Euler
hydrodynamics of one-dimensional attractive particle systems. {\em
Ann. Probab.} {\bf 34} (2006), no. 4, 1339--1369.
%
\bibitem{bgrs3}
Bahadoran, C., Guiol, H., Ravishankar, K., Saada, E. Strong
hydrodynamic limit for attractive particle systems on $\Z$. {\em
Elect. J. Probab.} {\bf 15} (2010), no. 1, 1--43.
%
\bibitem{bgrs4}  Bahadoran, C.; Guiol, H.; Ravishankar, K.; Saada, E.
Euler hydrodynamics for attractive particle systems in random environment.
\emph{Ann. Inst. H. Poincar\'e Probab. Statist.} \textbf{50} (2014), no. 2, 403--424. 
%
\bibitem{bmrs1} Bahadoran, C.; Mountford, T.S.; Ravishankar, K.; Saada, E. 
Supercriticality conditions for the asymmetric zero-range process with sitewise disorder. 
\emph{Braz. J. Probab.  Stat.} {\bf 29}, no. 2 (2015), 313--335.
%
\bibitem{bmrs2} Bahadoran, C.; Mountford, T.S.; Ravishankar, K.; Saada, E. 
Supercritical behavior of zero-range process with sitewise disorder. To appear in 
\emph{Ann. Inst. H. Poincar\'e Probab. Statist.} (2016).
%
\bibitem{bmrs3} Bahadoran, C.; Mountford, T.S.; Ravishankar, K.; Saada, E.:
Supercritical hydrodynamics for asymmetric zero-range process with site disorder. In progress.
\bibitem{bmrs4} Bahadoran, C.; Mountford, T.S.; Ravishankar, K.; Saada, E. 
Convergence and quenched local equilibrium for asymmetric zero-range process 
with site disorder. In progress.
%
 \bibitem{ballou} Ballou, D.P.  Solutions to nonlinear hyperbolic
Cauchy problems without convexity conditions. {\em Trans. Amer.
Math. Soc.} {\bf 152} (1970), 441--460. 
%
 \bibitem{bf}
Benassi, A., Fouque, J.P. Hydrodynamical limit for the asymmetric
exclusion process. {\em Ann. Probab.} {\bf 15} (1987), 546--560. 
%
\bibitem{bfl} Benjamini, I., Ferrari, P.A., Landim, C. Asymmetric
processes with random rates. {\em Stoch. Process. Appl.}  {\bf 61}
(1996), no. 2, 181--204.
%
\bibitem{bm}
Bramson, M., Mountford, T. Stationary blocking measures for
one-dimensional nonzero mean exclusion processes. {\em Ann. Probab.}
{\bf 30} (2002), no. 3, 1082--1130.
%
 \bibitem{bres}
Bressan, A.  {\em Hyperbolic systems of
conservation laws: the one-dimensional Cauchy problem.} Oxford
lecture series in Mathematics {\bf 20}, 2000. 
%
\bibitem{coc}
Cocozza-Thivent, C. Processus des misanthropes. {\em Z. Wahrsch.
Verw. Gebiete} {\bf 70} (1985), no. 4, 509--523.
%
\bibitem{dplm}
Dai Pra, P., Louis, P.Y., Minelli, I.
Realizable monotonicity for continuous-time Markov processes.
{\em Stoch. Process. Appl.} {\bf 120} (2010), no. 6, 959--982.
%
 \bibitem{dp} De Masi A., Presutti E. {\em Mathematical
Methods for Hydrodynamic Limits.} LN in Math. 1501, Springer, 1991. 
%
\bibitem{ev} Evans, M.R. Bose-Einstein condensation in
disordered exclusion models and relation to traffic flow.
{\em Europhys. Lett.} {\bf 36} (1996), no. 1, 13--18.
%
%
 \bibitem{fgs} Fajfrov\`a, L., Gobron, T., Saada, E. 
Invariant measures of Mass Migration Processes. 
\textit{Electron. J. Probab.}, {\bf 21}, no. 60 (2016), 1--52.  
%
\bibitem{fm}
Fill, J.A., Machida, M. Stochastic monotonicity and realizable monotonicity.
{\em Ann. Probab.} {\bf 29} (2001), no. 2, 938--978.
%
 \bibitem{frigray} 
Fristedt, B.; Gray, L. {\em A modern approach to probability theory.}
 Probability and its Applications. Birkh\"auser Boston, 1997. 
 %
 \bibitem{glimm}
Glimm, J.  Solutions in the large for nonlinear
hyperbolic systems of equations. Comm. Pure Appl. Math. {\bf 18} (1965),
697--715. 
%
 \bibitem{gs} Gobron, T., Saada, E. Couplings,
attractiveness and hydrodynamics for conservative particle systems.
{\em Ann. Inst. H. Poincar\'e Probab. Statist.} {\bf 46} (2010), no. 4, 1132--1177.
%
 \bibitem{gr}
Godlewski, E., Raviart, P.A. {\em Hyperbolic
systems of conservation laws}. Math\'ematiques \& Applications.
Ellipses, 1991.  
%
\bibitem{guiol}
Guiol, H. Some properties of $k$-step exclusion processes. {\em J.
Stat. Phys.} {\bf 94} (1999), no. 3-4, 495--511.
%
 \bibitem{h}
Harris, T.E. Nearest neighbor Markov interaction processes on
multidimensional lattices. {\em Adv. in Math.} {\bf 9} (1972),
66--89.
%
\bibitem{har}
Harris, T.E. Additive set-valued Markov processes and graphical
methods. {\em Ann. Probab.} {\bf 6} (1978), 355--378. 
%
\bibitem{kk} Kamae, T., Krengel, U. Stochastic partial ordering.
{\em Ann. Probab.} {\bf 6} (1978), no. 6, 1044--1049.
%
\bibitem{kl}
Kipnis, C., Landim, C. {\em Scaling limits of interacting particle
systems}. Grundlehren der Mathematischen Wissenschaften [Fundamental
Principles of Mathematical Sciences], {\bf 320}. Springer-Verlag,
Berlin, 1999.
%
 \bibitem{k}
Kru\v{z}kov, S. N.  First order quasilinear
equations with several independent variables.  {\em Math. URSS
Sb.} {\bf 10} (1970), 217--243.
%
 \bibitem{lig}
Liggett, T.M. Coupling the simple exclusion process. {\em Ann.
Probab.} {\bf 4} (1976), no. 3, 339--356.
%
\bibitem{lig1}
Liggett, T.M. {\em Interacting particle systems.} Classics in
Mathematics (Reprint of first edition), Springer-Verlag, New York,
2005.
%
 \bibitem{ls} Liggett, T. M., Spitzer, F.  
Ergodic theorems for coupled random walks and other systems with 
locally interacting components. {\em Z. Wahrsch. Verw. Gebiete}
\textbf{56} (1981), no. 4, 443--468. 
%
\bibitem{mrs}  Mountford, T.S., Ravishankar, K., Saada, E.
 Macroscopic stability for nonfinite range kernels.
 {\em  Braz. J. Probab. Stat.} {\bf 24} (2010), no. 2, 337--360.
%
\bibitem{fraydoun} Rezakhanlou, F. Hydrodynamic limit for attractive particle
systems on $ \Z\sp d$. {\em Comm. Math. Phys.} {\bf 140} (1991), no.
3, 417--448.
%
 \bibitem{rez}
Rezakhanlou, F.  Continuum limit for some growth
models. II. {\em Ann. Probab.} {\bf 29} (2001), no. 3, 1329--1372. 
%
\bibitem{rost} Rost, H. Nonequilibrium behaviour of a many
particle process: density profile and local equilibria, {\em Z.
Wahrsch. Verw. Gebiete} {\bf 58} (1981) no. 1, 41--53. 
%
\bibitem{ks} Sepp\"al\"ainen, T., Krug, J.
Hydrodynamics and Platoon formation for a totally asymmetric
exclusion model with particlewise disorder. {\em J. Stat. Phys.}
{\bf 95} (1999), no. 3-4, 525--567.
%
 \bibitem{timo}
Sepp\"al\"ainen, T. Existence of hydrodynamics for the totally
asymmetric simple $K$-exclusion process. {\em Ann. Probab.} {\bf 27}
(1999), no. 1, 361--415.
%
\bibitem{serre}
Serre, D. {\em Systems of conservation laws. 1. Hyperbolicity,
entropies, shock waves}. Translated from the 1996 French original by I.
N. Sneddon. Cambridge University Press, Cambridge, 1999.
%
 \bibitem{spohn} Spohn, H. {\em Large Scale Dynamics of
Interacting Particles.} Springer,  1991. 
%
\bibitem{strassen}
Strassen, V. The existence of probability measures with given
marginals. {\em Ann. Math. Statist.} {\bf 36} (1965), no. 2, 423--439.
%
 \bibitem{swart} Swart, J.M.  A course in interacting particle systems. Preprint. 
http:staff.utia.cas.cz/swart/lecture\_notes/partic15\_1.pdf   
%
 \bibitem{vol}
Vol'pert, A.I.  The spaces ${\rm BV}$ and
quasilinear equations. {\em Math. USSR Sbornik} {\bf 2} (1967), no. 2,
225--267. 
%
\end{thebibliography}
\end{document}